\documentclass[11pt,reqno]{amsart}
\usepackage{type1ec}
\usepackage[utf8]{inputenc}
\usepackage[T1]{fontenc}
\usepackage{amssymb,amsfonts,amsthm}
\usepackage{graphicx}
\usepackage{tikz}              
\usetikzlibrary{calc}  
\usepackage{amsthm}
\usepackage{amssymb}
\usepackage[french,english]{babel}
\usepackage{setspace}

\setlength{\textwidth}{16cm}
\setlength{\textheight}{23cm}
\setlength{\topmargin}{-1cm}
\setlength{\oddsidemargin}{-1mm}
\setlength{\evensidemargin}{-1mm}
\raggedbottom

%\newconstantfamily{C}{symbol=C}
%\newconstantfamily{c}{symbol=c}

%\usepackage[active]{Srcltx} %SRC Specials for DVI search
\usepackage{color}

\definecolor{Red}{cmyk}{0,1,1,0.2}

\usepackage{amsfonts,amsmath,layout}
%\usepackage{graphicx}   
%\usepackage{url}
%\usepackage{hyperref}

%\usepackage[active]{Srcltx} %SRC Specials for DVI search

%       les entiers naturels

%       les entiers relatifs

%       les nombres rationnels

%       les nombres reels
\newcommand{\R}{\mathbb R}

%       les nombres complexes
%\newcommand{\C}{\mathbb C}

%\def\muED{\begin{flushright}$\Box$\end{flushright}}

\def\R{\mathbb R}

\def\N{\mathbb N}

\def\P{\mathbb P}

\newcommand{\be}{\begin{equation}}
\newcommand{\ee}{\end{equation}}

\def\1{{\bf 1}}

\def\ds{\displaystyle}

\newcommand{\pare}[1]{\left(#1\right)}

\newcommand{\croc}[1]{\left[#1\right]}

%%%%%%%%%%%%%%%%%%%%%%%%%%%%%%%%%%

\newtheorem{Theorem}{Theorem}[section]
\newtheorem{Definition}[Theorem]{Definition}
\newtheorem{Proposition}[Theorem]{Proposition}
\newtheorem{Lemma}[Theorem]{Lemma}

\newtheorem{Remark}[Theorem]{Remark}

\newtheorem{Sketch of proof}[Theorem]{Sketch of proof}

\begin{document}
\title[Linear Parabolic PDE with local-time Kirchhoff's condition]{Well posedness of linear Parabolic partial differential equations posed on a star-shaped network with local time Kirchhoff's boundary condition at the vertex}
\author[Miguel Martinez \& Isaac Ohavi]{Miguel Martinez$^{\dagger}$, Isaac Ohavi$^{\star}$\\
{$^\dagger$ Universit\'e Gustave Eiffel, LAMA UMR 8050, France}\\{$^\star$ Hebrew University of
Jerusalem, Department of Statistics, Israël}}
\email{miguel.martinez@univ-eiffel.fr}
\email{isaac.ohavi@mail.huji.ac.il \& isaac.ohavi@gmail.com}
\thanks{Research partially granted by the Labex Bézout\\Research partially supported by the GIF grant 1489-304.6/2019}
\dedicatory{Version: \today}

\maketitle
\begin{abstract} The main purpose of this work is to provide an existence and uniqueness result for the solution of a linear parabolic system posed on a star-shaped network, which presents a new type of Kirchhoff's boundary transmission condition at the junction. This new type of Kirchhoff's condition~-~that we decide to call here {\it local-time Kirchhoff's condition}~-~induces a dynamical behavior with respect to an external variable that may be interpreted as a local time parameter, designed to drive the system only at the singular point of the network. The seeds of this study point towards a  theoretical inquiry of a particular generalization of Walsh's random spider motions, whose spinning measures would select the available directions according to the local time of the motion at the junction of the network.
\end{abstract}
\textbf{Key words:} Discontinuous linear parametric parabolic partial differential equations, Star-shaped network, Kirchhoff's boundary condition, Local time, Comparison theorem, Elliptic and parabolic convergence schemes, Walsh spider diffusions. 

\section{Introduction}\label{sec:intro}
We are given a terminal time condition $T>0$, an integer number $I$ (with $I\ge 2$) and star-shaped compact network: $$\displaystyle \mathcal{N}_R=\bigcup_{i=1}^I\mathcal{R}_i,$$ that consists of $I$ compact rays $\mathcal{R}_i\cong [0,R]$ ($R>0$) emanating from a junction point $\{0\}$. In this study, we investigate the well-posedness for classical solutions of the following PDE system with parameter posed on $\mathcal{N}_R$ that involves a {\it local-time Kirchhoff's boundary condition at $\{0\}$}:
\begin{eqnarray}\label{eq : pde with l}
\begin{cases}\partial_tu_i(t,x,l)-a_i(t,x,l)\partial_x^2u_i(t,x,l)
+b_i(t,x,l)\partial_xu_i(t,x,l)\\
\hspace{0,4 cm}+c_i(t,x,l)u_i(t,x,l)=f_i(t,x,l),~~(t,x,l)\in (0,T)\times (0,R)\times(0,K),\\
\text{\it Local time Kirchhoff's boundary condition:}\\
\partial_lu(t,0,l)+\displaystyle \sum_{i=1}^I \alpha_i(t,l)\partial_xu_i(t,0,l)-r(t,l)u(t,0,l)=\phi(t,l),~~(t,l)\in(0,T)\times(0,K)\\
\medskip
\partial_xu_i(t,R,l)=0,~~ (t,l)\in (0,T)\times(0,K),\\
\forall (i,j)\in[\![1,I]\!]^2,~~u_i(t,0,l)=u_j(t,0,l)=u(t,0,l),~~(t,l)\in|0,T]\times[0,K],\\
\forall i\in[\![1,I]\!],~~ u_i(t,x,K)=\psi_i(t,x),~~ (t,x)\in [0,T]\times[0,R],\\
\forall i\in[\![1,I]\!],~~ u_i(0,x,l)=g_i(x,l),~~(x,l)\in[0,R]\times[0,K].
\end{cases}
\end{eqnarray}
In order to simplify our study, we have assumed in our framework that all the rays $\mathcal{R}_i$ have the same length $R>0$, that a Neumann boundary condition holds at $x=R$, and that a Dirichlet boundary condition $\psi_i$ holds at $l=K$. Of course, a more general network setting could be treated with similar tools: one could for instance consider more general rays, and/or a mix of Neumann and Dirichlet boundary conditions or local-time Kirchhoff's boundary conditions at the vertices, unbounded rays, etc.

Let us explain the main motivation that grounds our study of the system \eqref{eq : pde with l}. Initially introduced by J. Walsh in \cite{Walsh}, Walsh’s Brownian spider motion is a continuous process on a set of $I$ rays embedded in $\R^2$ emanating from $\{0\}$. Roughly speaking, to each ray $\mathcal{R}_i$ we associate a weight $\alpha_i$ that  corresponds (very) heuristically to the probability for the process to visit $\mathcal{R}_i$ when it leaves the junction point $\{0\}$. Inside each ray and apart from the junction point, the process behaves like a Brownian motion. However, because the trajectories of the Brownian
motion are not of bounded variation, this intuitive description does not make sense: starting from $\{0\}$ the process visits all the rays at once (all the rays are visited on any arbitrary small time interval). 

As a generalisation of Walsh’s Brownian motion, diffusions on graphs were introduced in the seminal works of Freidlin and Wentzell \cite{Freidlin} for a star-shaped network $\mathcal{N}_R$ and then for general graphs in Freidlin and Sheu \cite{freidlinS}. 

Given $I$ pairs $(\sigma_{i},b_{i})_{i\in I}$ of mild coefficients of diffusion from $[0,+\infty)$ to $\R$ satisfying the following condition of ellipticity: $\forall i\in [\![1,I]\!],~\sigma_i>0$, and given $\big(\alpha_1,\ldots,\alpha_I)$ positive constants satisfying $\displaystyle \sum_{i=1}^I \alpha_i=1$, it is proved in \cite{Freidlin} that there exists a continuous Feller Markov process $\big(x(\cdot),i(\cdot)\big)$ valued in $\mathcal{N}_R$ whose generator is given by the following operator:
$$\mathcal{L}:\begin{cases}\mathcal{C}^2(\mathcal{N}_R)\to \mathcal{C}(\mathcal{N}_R),\\
f=f_i(x)\mapsto 
b_i(x)\partial_xf_i(x)+\displaystyle \frac{\sigma_i^2(x)}{2}\partial_x^2f_i(x)\end{cases},$$
with domain
$$D(\mathcal{L}):=\Big\{ f \in \mathcal{C}^2(\mathcal{N}_R),~~\displaystyle \sum_{i=1}^I\alpha_i\partial_xf_i(0)=0\Big\}.$$
In the above, for $k=0,1,2,\dots$ the $k$-th order continuous class space on the junction network $\mathcal{C}^k(\mathcal{N}_R)$ is defined as 
\[
\left \{f:~\mathcal{N}_R\rightarrow \R,\,\,(x,i)\mapsto f_i(x)\;\;\text{s.t.}\;\;\;\forall (i,j)\in [\![1,I]\!]^2,\;\;\;f_i(0) = f_j(0),\;\;\;f_i\in {\mathcal C}^k([0,R])\right \}.
\]
\begin{Remark}
As is standard in the definition of the space $\mathcal{C}^k(\mathcal{N}_R)$ (see \cite{Freidlin}, \cite{Ohavi PDE}, \cite{Lions Souganidis 1}), we do not impose the existence of a continuous $k$-th order derivative at the junction, so the notation $\mathcal{C}^k(\mathcal{N}_R)$ might be a little misleading at first. The reader should keep in mind that - in addition to the existence of $k$-th order separate derivatives in all directions - only continuity is imposed at the junction $0$. 
\end{Remark}

Thereafter, it is shown in \cite{freidlinS} that there exists a one dimensional Wiener process $W$ defined on a probability space $(\Omega,\mathcal{F},\P)$ and adapted to the natural filtration of $\big(x(\cdot),i(\cdot)\big)$, such that the process $\big(x(\cdot)\big)$ satisfies the following stochastic differential equality:
\begin{equation*}
dx(t)= b_{i(t)}(x(t))dt + \sigma_{i(t)}(x(t))dW(t)+d\ell(t) \;,~~0\leq t\leq T.
\end{equation*}
In the above equality the process $(\ell(\cdot))$ has increasing paths, starts from $0$ and satisfies:
\begin{eqnarray*}
\forall t\in [0,T],~~\int_{0}^t\mathbf{1}_{\{x(s)>0\}}d\ell(s)=0,~~\P-\text{a.s.}
\end{eqnarray*}
Moreover, the following It\^{o}'s formula is proved in \cite{freidlinS}: 
\begin{eqnarray*}
\nonumber&\displaystyle df_{i(t)}(x(t))~~=~~ \Big(b_{i(t)}(x(t))\partial_xf_{i(t)}(x(t))+ \frac{1}{2}\sigma_{i(t)}^2(x(t))\partial_{x}^2f_{i(t)}(x(t))\Big)dt+\\
&
\displaystyle \partial_xf_{i(t)}(x(t))\sigma_{i(t)}(x(t))dW(t) 
+ \sum_{i=1}^{I}\alpha_{i}\partial_xf_i(0)d\ell(t),~~\P-\text{a.s,}
\end{eqnarray*}
for any sufficiently regular $f$. 
The process  $\ell(\cdot)$ can be interpreted as the local time of the process  $\big(x(\cdot),i(\cdot)\big)$ at the junction point $\{0\}$ ; indeed, a quadratic approximation of the local time process $\ell$ is given by the convergence:
\begin{eqnarray}\label{eq : temps local}
\lim_{\varepsilon \to 0}~~\mathbb{E}^{\mathbb{P}} \Big[~~\Big|~~\Big(\frac{1}{2\varepsilon}\sum_{i=1}^{I}\int_{0}^{\cdot}\sigma^{2}_{i}(0)\mathbf{1}_{\{0\leq x(s)\leq\varepsilon,i(s)=i\}}ds\Big)~~-~~\ell(\cdot)~~\Big|_{(0,T)}^2~~\Big] ~~ =~~0.
\end{eqnarray}
In \cite{Spider}, we have built a spider diffusion process satisfying uniqueness in law, with random spinning measure $(\alpha_i)_{i\in I}$ depending on the own local time of the spider process at the junction $\{0\}$. 
In this framework, the underlying process  $\Big(\big(x(\cdot),i(\cdot)\big),\ell(\cdot)\Big)$ exists and satisfies the following It\^{o}'s rule: \begin{eqnarray}\label{eq Ito spider originale}
\nonumber &f_{i(s)}(s,x(s),\ell(s))- f_{i(t)}(t,x(t),\ell(t))=\displaystyle\int_{t}^{s} \Big[\partial_tf_{i(u)}(u,x(u),\ell(u))+\\
&\nonumber b_{i(u)}(u,x(u),\ell(u))\partial_xf_{i(u)}(u,x(u),\ell(u))+
\displaystyle \frac{\sigma_{i(u)}^2(u,x(u),\ell(u))}{2}\partial_x^2f_{i(u)}(u,x(u),\ell(u))\Big]du+\\
&\nonumber \ds\int_{t}^{s}\sigma_{i(u)}(u,x(u),\ell(u))\partial_xf_{i(u)}(u,x(u),\ell(u))dW(u)+\\
&\ds \int_{t}^{s}\big[\partial_lf(u,0,\ell(u))+
\ds \sum_{i=1}^I\alpha_i(u,\ell(u))\partial_xf_i(u,0,\ell(u))\big]d\ell(u),~~s\ge t,
\end{eqnarray} 
for sufficiently regular $f$.
In the above, we use the data set $\{\alpha_i~:~(t,l);~~(i,t,l)\in [\![1,I]\!]\times [0,T]\times[0,+\infty)\}$ that denotes the selection spinning coefficients satisfying $$\displaystyle \forall (t,l)\in  [0,T]\times[0,+\infty),\;\;\;\;\sum_{i=1}^I \alpha_i(t,l)=1.$$
The proof for the existence of such a process satisfying \eqref{eq Ito spider originale} was performed using the original construction of concatenation of solutions for martingales problems by Stroock and Varadhan in \cite{Strook}. The more difficult proof for a criterion that ensures the uniqueness of a weak solution (or uniqueness in law) was achieved using a PDE argument: this last important issue gives the justification for this contribution. 

\medskip
\medskip

The precise understanding of the diffraction of Walsh's random motions may have important applications, for instance if one is interested to describe the diffusive behavior of particles subjected to scattering (or diffraction), for which very little physical information is known. As an example, we mention \cite{Scatterin Theory}: the theory of quantum trajectories states that quantum systems can be modelled as scattering processes and that these scattering effects may occur in prescribed directions emanating from a single point. Note that light scattering is a phenomenon that has long-time attracted many scientists for its importance in advanced photonics technologies such as on-chip interconnections, refined bio-imaging, solar-cells, heat-assisted magnetic recording, etc. (For an account on all these topics and the importance of the scattering phenomenon, see for e.g. \cite{DLS}).
\medskip
\medskip

There have been several works on linear and quasilinear parabolic non degenerate equations of the form \eqref{eq : pde with l} - but without involving any dependence with respect to some 'external variable' - that present a classical formulation of the boundary Kirchhoff's condition:
$$\sum_{i=1}^I\alpha_i(t)\partial_xf_i(t,0)=0,~~t\in(0,T).$$
For linear equations, up to our knowledge, one of the most relevant work is the one which was carried out by Von Below in references \cite{Von Below, Below 3, Below 4}. Essentially, it is shown in \cite{Von Below} that - under natural smoothness and strong compatibility conditions - linear boundary value problems defined on a star-shape network that involve a linear boundary Kirchhoff's condition at the junction point are well-posed. The proof relies on a particular linear transformation that {\it mutis mutandis} permits to retrieve the classical framework of parabolic systems. Note that this approach increases the dimension of the original problem and cannot be adapted directly to the framework of this contribution -- at least to the best of our abilities. We revisit the result of \cite{Von Below} in Section \ref{sec : Von Below revisited} by presenting another path for the construction of solutions: namely, we follow the main ideas presented by the second author in \cite{Ohavi PDE} (in a Quasi-linear parabolic framework) and proceed to the proof of the convergence of the corresponding elliptic schemes, with the same method used successively in \cite{Ohavi PDE} for the existence of classical solutions in suitable H\"{o}lder spaces for non degenerate quasi-linear parabolic systems.

In \cite{Below 3} the strong maximum principle for semi linear parabolic operators with Kirchhoff's condition was proved, while in \cite{Below 4} the author studied the classical global solvability for a class of semilinear parabolic equations on ramified networks, where a time-dynamical condition is prescribed at each node of the underlying network.
Compared to the results stated in \cite{Von Below} (when no dependency on the 'external variable' $l$ is involved), our methodology permits to re-state the well-posedness of the problem in the fully linear case with a weakening on the necessary compatibility conditions for the data at the boundary and also a weakening on the required regularity of the coefficients at the junction point $\{0\}$. We investigate also refined bounds for the derivatives of the solution - especially the key time derivative term $|\partial_tu(t,0)|$ - that play a crucial role in the construction of the solution to the system \eqref{eq : pde with l}.

In the linear setting, let us mention also another approach that was developed by M.K. Fijavz, D.Mugnolo and E. Sikolya in \cite{M-K}: their idea is to combine semi-group theory with variational methods in order to understand how the spectrum of the operator relates to the structure of the network. We will not investigate these issues in this contribution. 
\medskip

Parabolic (or elliptic) equations posed on networks can also be analyzed in terms of viscosity solutions. To our knowledge, the first results on viscosity solutions for Hamilton-Jacobi equations on networks have been obtained by Schieborn in \cite{D-S these} for the Eikonal equation. Later, investigations have been discussed in many contributions on first order problems \cite{Camilli 1, Imbert Nguyen, Lions College France}, elliptic equations \cite{Lions Souganidis 1} and second order problems with vanishing diffusion at the vertex \cite{Lions Souganidis 2}. 
In contrast and to the best of our expertise, mainly because of the difficulty of this subject, second order Hamilton-Jacobi equations on networks with a non-vanishing viscosity at the vertices have seldom been studied in the literature. Indeed, because
of the discontinuities of the Hamiltonians, the classical theory of viscosity solutions
cannot be applied directly. However, very recently, the second author managed to obtain a comparison theorem (thus uniqueness) for continuous viscosity solution to some kind of Walsh’s spider Hamilton-Jacobi-Bellman system that possesses a new type of boundary condition at the vertex $\{0\}$ involving
a non linear local time Kirchhoff ’s transmission (see \cite{Ohavi visco} for details): this non linear local transmission condition may be seen as a non linear generalization of the local time Kirchhoff's condition of \eqref{eq : pde with l} that is being investigated here for the first time.  The system studied in \cite{Ohavi visco} can also be seen as the extension in the elliptic non linear framework of system \eqref{eq : pde with l}.  Note that in \cite{Ohavi visco},
the introduction of the external ’local-time’ variable $\ell$, with aid of a new technique for building test functions at the neighborhood of $\{0\}$,
is a crucial ingredient to obtain the comparison principle. In future work,
we plan to prove that the viscosity solution to some kind of {\it Walsh’s spider Hamilton-Jacobi-Bellman system }characterizes in a unique way the value function of a well designed stochastic scattering control problem, with optimal spinning measure selected from the own local time of the Walsh spider process.
\medskip
\medskip

The construction of the solution of the system \eqref{eq : pde with l} involving the local time variable $l$ is achieved by proving the convergence of a parabolic scheme that uses a discretization grid corresponding to the variable $l$ (see Section \ref{sec: preuve résultat principal}).

Using classical arguments, we prove that uniqueness for solutions of system \eqref{eq : pde with l} holds true for solutions that have enough regularity (see Theorem \ref{th : para comparison th with l}). Under mild assumptions, we will see that classical solutions of the system \eqref{eq : pde with l} belong to the class $\mathcal{C}^{1,2}$ in the interior of each edge and $\mathcal{C}^{0,1}$ in the whole domain (with respect to the time-space variables $(t,x)$). Since the variable $l$ drives dynamically the system only at the junction point $\{0\}$ with the presence of the derivative $\partial_lu(t,0,l)$ in the local time Kirchhoff's boundary condition, one can expect a regularity in the class $\mathcal{C}^1$ for $l\mapsto u(t,0,l)$ and this is indeed the case (see our main Theorem \ref{th : exis para with l} and point $iv)$ in Definition \ref{definition-espace-solutions}). Inside each ray, because of the lack of information on the dependency of the solution w.r.t. the variable $l$, we believe there is very little hope to prove the existence of a partial derivative with respect to $l$ in the classical sense. However, we manage to prove that the solution of the system admits a square integrable generalized derivative $\partial_l u$ with respect to the variable $l$ (see again Theorem \ref{th : exis para with l} and point $v)$ in Definition \ref{definition-espace-solutions}).

Recall that the roots of our study of system \eqref{eq : pde with l} are grounded to our inquiry regarding the possible construction of a Walsh-spider diffusion living on $\mathcal{N}_R$ having a spinning measure that selects directions with respect to its own local time. Having this in mind, one should remember that the local time at the junction point $\{0\}$ exists only if diffusion coefficients are non degenerate. Clearly, both problems are deeply connected.
From a PDE technical aspect pointing towards the construction of the corresponding Walsh-spider diffusion, the main challenge is to obtain an H\"{o}lder continuity of the partial functions $l\mapsto \Big(\partial_tu_i(t,x,l),\partial_xu_i(t,x,l),\partial_x^2u_i(t,x,l)\Big)$ for any $x>0$. 
We will show that such regularity is guaranteed by the central assumption on the ellipticity of the diffusion coefficients on each rays together with the mild dependency of the coefficients and free term with respect to the variable $l$. 
\medskip
\medskip

The paper is organized as follows. In Section \ref{Main results} we introduce all the necessary material needed for our purposes and we announce our main Theorem \ref{th : exis para with l}. We also state a comparison theorem (Theorem \ref{th : para comparison th with l}) that will be of constant use in the proofs. Without involving the additional {\it local time variable} '$l$' at this stage, but under somewhat weaker assumptions, we provide in Section \ref{sec : Von Below revisited} another proof of the main result obtained in \cite{Von Below} for parabolic systems that involve a standard boundary Kirchhoff's transmission condition. In particular, by adapting the same methods as those employed in \cite{Ohavi PDE}, we manage to derive interesting bounds for the solution and its partial derivatives. Finally in Section \ref{sec: preuve résultat principal}, we prove our main result concluding to the well-posedness of system \eqref{eq : pde with l}.

\section{Introduction and Main results}\label{Main results}

In this section we state our main result - Theorem \ref{th : exis para with l} - regarding the solvability of the parabolic problem \eqref{eq : pde with l} involving the {\it local-time Kirchhoff's boundary condition} at the junction point, posed on a star-shaped compact network.

\subsection{Notations and preliminary results}\label{subsec: Prel}
Let us start by introducing the main notations as well as some preliminary results.
\medskip 

Let $I\in \mathbb{N}^*$ be the number of edges and $R>0$ be the common length of each ray. The bounded star-shaped compact network $\mathcal{N}_R$ is defined by:
$$\mathcal{N}_R=\bigcup_{i =1}^I \mathcal{R}_{i}$$
where
\begin{eqnarray*} \forall i\in [\![1,I]\!]~~\mathcal{R}_{i}:=[0,R]~~\text{and}~~\forall (i,j)\in [\![1,I]\!]^2,~~i\neq j,~~\mathcal{R}_{i}\cap \mathcal{R}_{j}=\{0\}.
\end{eqnarray*}
The intersection of all the rays $(\mathcal{R}_{i})_{1 \leq i\leq I}$ is called the junction point and is denoted by $\{0\}$.\\
We identify  all the points of $\mathcal{N}_R$ by couples $(x,i)$ (with $i \in[\![1,I]\!], x\in|0,R]$), such that we have: $(x,i)\in \mathcal{N}_R$, if and only if $x\in \mathcal{R}_{i}$. The compact star-shaped network $\mathcal{N}_R$ taken without the junction point $\{0\}$ is denoted by:
\begin{equation}
\label{eq:reseau-etoile}
\mathcal{N}_R^*=\mathcal{N}_R\setminus \{0\}.
\end{equation}
The 'space domain' of the PDE system \eqref{eq : pde with l} is the following:
\begin{eqnarray*}\Omega= \overset{\circ}{\mathcal{N}_R}\times (0,K) \ni ((x,i),l),
\end{eqnarray*}
whereas the 'time-space domain' is:
$$\Omega_T=(0,T)\times \Omega \ni (t,(x,i),l),$$
where $T>0$ denotes some fixed horizon time.

Regarding the functional spaces that will be used in the sequel, we will use standard notations (see e.g. Chapter 1.1 of \cite{pde para}) adapted for our purposes that are recalled in Appendix \ref{sec : functionnal spaces} for the convenience of the reader. Before giving the definition of the class of regularity for our solution of the PDE system \eqref{eq : pde with l}, we introduce (using the notations of Appendix \ref{sec : functionnal spaces}), the following sets:
\begin{align*}
  &i)~\mathcal{C}^{\frac{\alpha}{2},\alpha,\frac{\alpha}{2}}\big([0,T]\times [0,R]\times [0,K],\,\R \big):=\mathcal{H}^{\frac{\alpha}{2},\alpha,\frac{\alpha}{2}}\big([0,T]\times [0,R]\times [0,K]\big);\\
  &ii)~\mathcal{C}^{\frac{\alpha}{2},1+\alpha,\frac{\alpha}{2}}\big([0,T]\times [0,R]\times [0,K),\,\R \big):=\Big\{f:[0,T]\times [0,R]\times [0,K]\to \R,~(t,x,l)\mapsto f(t,x,l),\\
  &\Big|~f\in \mathcal{C}^{0,1,0}\big([0,T]\times [0,R]\times [0,\overline{K}] \big),~(f,\partial_xf)\in \mathcal{H}^{\frac{\alpha}{2},\alpha,\frac{\alpha}{2}}\big([0,T]\times [0,R]\times [0,\overline{K}]\big)^2,~\forall \overline{K}\in (0,K) \Big\};\\
  &(iii)~\mathcal{C}^{\frac{\alpha}{2},1+\frac{\alpha}{2}}\big([0,T]\times [0,K),\,\R \big):=\Big\{f:[0,T]\times [0,K]\to \R,~(t,l)\mapsto f(t,l)~~\Big|\\
  &\hspace{3.6 cm}f\in \mathcal{C}^{0,1}\big([0,T]\times [0,\overline{K}] \big),~(f,\partial_lf)\in \mathcal{H}^{\frac{\alpha}{2},\frac{\alpha}{2}}\big([0,T]\times [0,\overline{K}]\big)^2,~\forall \overline{K}\in (0,K) \Big\};\\
  &(iv)~\mathcal{C}^{1+\frac{\alpha}{2},2+\alpha,\frac{\alpha}{2}}\big((0,T)\times (0,R)\times (0,K),\,\R \big):=\Big\{f:[0,T]\times [0,R]\times [0,K]\mapsto \R,\\&\hspace{2.5 cm}\Big|~f\in \mathcal{C}^{1,2,0}\big(\mathcal{O}\big),~(f,\partial_tf,\partial_xf,\partial_x^2f)\in \mathcal{H}^{\frac{\alpha}{2},\alpha,\frac{\alpha}{2}}\big(\mathcal{O}\big),~\forall O\subset \subset [0,T]\times [0,R]\times [0,K]\Big\}.
\end{align*}
In the above, the notation $\forall O\subset \subset [0,T]\times [0,R]\times [0,K]$ refers to any open set $\mathcal{O}$ separated from the boundary of $[0,T]\times [0,R]\times [0,K]$ by a strictly positive distance, namely: 
\begin{eqnarray*}
\inf~~\Big\{||x-y||_{\R^3},~~ y\in\partial \Big([0,T]\times [0,R]\times [0,K]\Big),~x \in \overline{\mathcal{O}}\Big\}~~>~~0.
\end{eqnarray*}
Let us now give the definition of the class of regularity for our solution of the PDE system \eqref{eq : pde with l}.
\begin{Definition}
\label{definition-espace-solutions}
Let $\alpha \in (0,1)$. 

We say that
$$f:\overline{\Omega}_T\to \R,\;\;\big(t,(x,i),l\big)\mapsto f_i(t,x,l)$$
is in the class $f\in \mathfrak{C}^{1+\frac{\alpha}{2},2+\alpha,\frac{\alpha}{2}}_{\{0\}} \big(\Omega_T\big)$ if:\\
(i) for all $(t,l)\in[0,T]\times[0,K]$, for all $(i,j)\in [\![1,I]\!]^2$, $f_i(t,0,l)=f_j(t,0,l)=f(t,0,l)$ {\it (continuity condition at the junction point $\{0\}$)};\\
(ii) for all $i\in [\![1,I]\!]$, the map $(t,x,l)\mapsto f_i(t,x,l)$ is in the class $\mathcal{C}^{\frac{\alpha}{2},\alpha,\frac{\alpha}{2}}\big([0,T]\times [0,R]\times [0,K],\,\R \big)\cap \mathcal{C}^{\frac{\alpha}{2},1+\alpha,\frac{\alpha}{2}}\big([0,T]\times [0,R]\times [0,K),\,\R \big)$;\\
(iii) for all $i\in [\![1,I]\!]$, the map $(t,x,l)\mapsto f_i(t,x,l)$ belongs to $\mathcal{C}^{1+\frac{\alpha}{2},2+\alpha,\frac{\alpha}{2}}\big((0,T)\times (0,R)\times (0,K),\,\R \big)$;
moreover, $(\partial_tf_i,\partial_x^2f_i)\in L_{\infty}\big((0,T)\times (0,R)\times (0,K)\big)$;\\
(iv) at the junction point $\{0\}$, the map $(t,l)\mapsto f(t,0,l)$ belongs to $\mathcal{C}^{\frac{\alpha}{2},1+\frac{\alpha}{2}}\big([0,T]\times [0,K),\,\R \big)$;\\
(v) finally, for all $i\in [\![1,I]\!]$, on each ray $\mathcal{R}_i$, $f$ admits a generalized derivative with respect to the variable $l$ in $\displaystyle\bigcap\limits_{q\in (1,+\infty)}L^q\big((0,T)\times (0,R)\times (0,K)\big)$.
\end{Definition}

In the same way, we define the classes  $\mathfrak{C}^{1,2,0}_{\{0\}}\big(\Omega_T\big)$ analogously to $\big[i)-ii)-iii)-iv)-v)\big]$ but without any additional H\"older regularity;
$\mathfrak{Lip}^{2,0}_{\{0\}}\big(\Omega\big)$ is defined analogously to $\big[i)-ii)-iii)-iv)-v)\big]$ but removing the dependence on the time variable and with Lipschitz regularity.

Let us recall a very useful lemma of interpolation. The main ingredients of its proof can be found in Lemma 2.1 of \cite{Ohavi PDE} and since this contribution is already quite long, we have decided not to give it here. The idea is to observe that the H\"{o}lder constants appearing in the statement are uniforms in the variables $(t,x,l)$, so that the proof can be achieved exchanging the roles of $l$ and $t$ in Lemma 2.1 of \cite{Ohavi PDE}. 

\begin{Lemma}\label{lm : cont deriv temps grad}
Fix $\underline{K}>0$. Assume that $u:=u(t,x,l) \in \mathcal{C}^{0,1,0}([0,T]\times[0,R]\times [0,\underline{K}])$ satisfies 
\begin{eqnarray*}
\forall (t,s,l,q)\in[0,T]^2\times [0,\underline{K}]^2\;\;\;\text{s.t.}\;\;\;\;|t-s|\leq 1,&&\!\!\!\!\!\!\!\!|l-q|\leq 1,\text{ and }~~\forall x,y\in[0,R]:\\
|u(t,x,l)- u(s,x,q)|&\leq& \nu_1|t-s|^\alpha + \nu_2|l-q|^\beta,\\
|\partial_xu(t,x,l)-\partial_xu(t,y,l)|&\leq& \nu_3|x-y|^\gamma
\end{eqnarray*}
for some given constants $\nu_1, \nu_2, \nu_3\in \R^+$ and $\alpha, \beta, \gamma \in (0,1)$.
Then
\begin{eqnarray*}
&\forall (t,s,l,q)\in[0,T]^2\times [0,\underline{K}]^2,\;\;\;\text{s.t.}\;\;\;\;|t-s|\leq 1,~~|l-q|\leq 1, \text{ and }~~\forall x\in[0,R]:\\
&|\partial_xu(t,x,l)-\partial_xu(s,x,l)|~~~\leq ~~\Big(2\nu_3\Big(\frac{\nu_1}{\gamma \nu_3}\Big)^{\frac{\gamma}{1+\gamma}}~~+~~2\nu_1\Big(\frac{\gamma\nu_3}{ \nu_1}\Big)^{-\frac{1}{1+\gamma}}\Big)|t-s|^{\frac{\alpha\gamma}{1+\gamma}},\\
&|\partial_xu(s,x,l)-\partial_xu(s,x,q)|~~~\leq ~~\Big(2\nu_3\Big(\frac{\nu_2}{\gamma \nu_3}\Big)^{\frac{\gamma}{1+\gamma}}~~+~~2\nu_2\Big(\frac{\gamma\nu_3}{ \nu_2}\Big)^{-\frac{1}{1+\gamma}}\Big)|l-q|^{\frac{\beta \gamma}{1+\gamma}}.
\end{eqnarray*}
\end{Lemma}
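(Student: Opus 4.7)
The plan is to exploit a classical finite-difference interpolation trick: one relates the finite difference $\partial_x u(t,x,l)-\partial_x u(s,x,l)$ to the actual difference $u(t,\cdot,l)-u(s,\cdot,l)$ via integration, picks up an error term controlled by the $x$-Hölder modulus of $\partial_x u$, and then optimizes in the step size. Since the two conclusions of the lemma are completely symmetric in the roles of $t$ and $l$, I would only write out the argument for the $t$-variable and state that the $l$-case follows verbatim by swapping $(\nu_1,\alpha)\leftrightarrow(\nu_2,\beta)$.

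First I would fix $(t,s,l,q,x)$ and $h\in(0,R]$ to be chosen later, with $x+h\le R$ (the case $x+h>R$ is handled by replacing $h$ with $-h$ and integrating over $[-h,0]$, picking up the same bound). Writing
\begin{equation*}
\partial_x u(t,x,l)-\partial_x u(s,x,l)
= \frac{1}{h}\int_0^h\bigl[\partial_x u(t,x,l)-\partial_x u(t,x+\xi,l)\bigr]\,d\xi
+ \frac{1}{h}\int_0^h\bigl[\partial_x u(t,x+\xi,l)-\partial_x u(s,x+\xi,l)\bigr]\,d\xi
+ \frac{1}{h}\int_0^h\bigl[\partial_x u(s,x+\xi,l)-\partial_x u(s,x,l)\bigr]\,d\xi,
\end{equation*}
the middle integral telescopes to
\begin{equation*}
\frac{1}{h}\Bigl(\bigl[u(t,x+h,l)-u(t,x,l)\bigr]-\bigl[u(s,x+h,l)-u(s,x,l)\bigr]\Bigr),
\end{equation*}
which by the assumed $(t,l)$-Hölder estimate on $u$ is bounded in absolute value by $2\nu_1|t-s|^{\alpha}/h$. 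The first and third integrals are each bounded by the assumed $\gamma$-Hölder regularity of $\partial_x u$ in $x$, giving at most $\nu_3 h^\gamma$ apiece. Combining,
\begin{equation*}
|\partial_x u(t,x,l)-\partial_x u(s,x,l)|\le 2\nu_3 h^\gamma+\frac{2\nu_1|t-s|^{\alpha}}{h}.
\end{equation*}

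Next I would minimize the right-hand side in $h>0$: setting the derivative to zero yields $h_\ast=\bigl(\nu_1|t-s|^{\alpha}/(\gamma\nu_3)\bigr)^{1/(1+\gamma)}$, and substituting back produces the announced bound $\bigl(2\nu_3(\nu_1/(\gamma\nu_3))^{\gamma/(1+\gamma)}+2\nu_1(\gamma\nu_3/\nu_1)^{-1/(1+\gamma)}\bigr)|t-s|^{\alpha\gamma/(1+\gamma)}$. The only mildly delicate point I anticipate is verifying that $h_\ast\in(0,R]$: since $|t-s|\le1$ the quantity $|t-s|^{\alpha/(1+\gamma)}$ is bounded, so one may always shrink $h$ (either by capping it at $R$ with a worse but still Hölder estimate, or by using the backward increment when $x$ is near $R$) without altering the exponent $\alpha\gamma/(1+\gamma)$. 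The $l$-variable statement is then obtained by repeating the above verbatim with $(t,s)$ replaced by $(l,q)$ and $(\nu_1,\alpha)$ replaced by $(\nu_2,\beta)$, using the hypothesis $|l-q|\le 1$ in place of $|t-s|\le 1$. The main obstacle is purely bookkeeping—keeping the boundary case $x+h>R$ from spoiling the optimization—while the interpolation identity itself is the essential device.
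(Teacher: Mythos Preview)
Your proposal is correct and follows essentially the same route as the paper's own proof: the same three-term decomposition via averaging over an interval of length $h$, the same bounds $2\nu_3 h^\gamma + 2\nu_1|t-s|^\alpha/h$, the same minimizing choice $h_\ast=(\nu_1|t-s|^\alpha/(\gamma\nu_3))^{1/(1+\gamma)}$, and the same boundary fix of switching to backward increments near $x=R$. The only cosmetic difference is that the paper writes the integration variable as $y-x$ rather than $h$ and splits into the cases $x\in[0,R/2]$ versus $x\in[R/2,R]$ to guarantee room for the optimal step.
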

One of the main important technical issues when one wants to study the well posedness of the system \eqref{eq : pde with l} is to characterize the regularity of the derivatives $\big(\partial_tu,\partial_xu,\partial_x^2u\big)$ with respect to the variable $l$ of some possible generalized solution $u$. Here, we will see that the smoothness of a generalized solution (the term generalized being applied only on each ray $\mathcal{R}_i$ separately, leaving the junction $\{0\}$ out) is determined only by the smoothness of the coefficients and free terms. We state the following important lemma.

\begin{Lemma}\label{lem : regu Holder interieure}
Suppose that $u \in W^{1,2,0}_2\big((0,T)\times(0,R)\times (0,K)\big)\cap \mathcal{C}^{\frac{\alpha}{2},\alpha,\frac{\alpha}{2}}\big([0,T]\times [0,R]\times [0,K]\big)$, ($\alpha \in (0,1)$) is a generalized solution of the following parametric parabolic problem w.r.t to the variable $l\in [0,K]$:
\begin{eqnarray}
\label{eq:lemme-2.3}
\partial_t u(t,x,l)-a(t,x,l)\partial_x^2u(t,x,l)+b(t,x,l)\partial_xu(t,x,l)+c(t,x,l)u(t,x,l)-f(t,x,l)=0,
\end{eqnarray}
$ \Big($namely
$$\int_0^T\!\!\!\int_0^R\!\!\!\int_0^K \big(\partial u(t,x,l)-a(t,x,l)\partial_xu(t,x,l)+c(t,x,l)u(t,x,l)-f(t,x,l)\big)\phi(t,x,l)dtdxdl=0,$$
for any $\phi \in \mathcal{C}^{\infty}_c\big([0,T]\times[0,R]\times [0,K]\big)$ in the class of infinite differential functions with compact support strictly included in $(0,T)\times(0,R)\times (0,K) \Big)$.

Assume that the coefficient $a$ is elliptic:
$$ \forall (t,x,l)\in [0,T]\times[0,R]\times [0,K],~~a(t,x,l)\ge \underline{a}>0$$ and the coefficients and free terms $(a,b,c,f)$ have H\"{o}lder regularity in the class $\mathcal{C}^{\frac{\alpha}{2},\alpha,\frac{\alpha}{2}}\big((0,T)\times(0,R)\times (0,K)\big)$.

Then $u$ belongs to the class $\mathcal{C}^{1+\frac{\alpha}{2},2+\alpha,\frac{\alpha}{2}}\big((0,T)\times(0,R)\times (0,K)\big)$. 
\end{Lemma}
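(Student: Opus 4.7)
The plan is to proceed in two stages: first, for each frozen value of the parameter $l\in(0,K)$, view \eqref{eq:lemme-2.3} as a purely parabolic equation in $(t,x)$ and apply classical interior parabolic Schauder estimates to obtain full $\mathcal{C}^{1+\alpha/2,2+\alpha}$ regularity in $(t,x)$, with constants uniform in $l$; then, to capture the Hölder dependence of $u$ and of its derivatives in the parameter $l$, I would study the parabolic equation satisfied by the difference $u(\cdot,\cdot,l)-u(\cdot,\cdot,l')$ and apply the same Schauder machinery to it.

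For the first stage, fix $l\in(0,K)$ and let $Q'\Subset (0,T)\times(0,R)$ be an arbitrary interior subdomain. Under the uniform ellipticity $a\geq \underline{a}>0$ and the Hölder regularity of $(a,b,c,f)(\cdot,\cdot,l)$ in $\mathcal{C}^{\alpha/2,\alpha}\big((0,T)\times(0,R)\big)$, the classical interior Schauder estimate for linear parabolic equations (see e.g.\ Theorem~4.10.1 in \cite{pde para}) applied to the generalized solution $u(\cdot,\cdot,l)\in W^{1,2}_2\cap\mathcal{C}^{\alpha/2,\alpha}$ yields
\[
\big|u(\cdot,\cdot,l)\big|_{\mathcal{C}^{1+\alpha/2,2+\alpha}(Q')}\leq C\Big(\big|u(\cdot,\cdot,l)\big|_{\mathcal{C}^{\alpha/2,\alpha}}+\big|f(\cdot,\cdot,l)\big|_{\mathcal{C}^{\alpha/2,\alpha}}\Big),
\]
where the constant $C$ depends only on $Q'$, $\underline{a}$ and on the Hölder norms of $(a,b,c)$, hence is uniform in $l$ thanks to the assumption that these quantities are globally bounded on $[0,T]\times[0,R]\times[0,K]$. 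This delivers the required interior $\mathcal{C}^{1+\alpha/2,2+\alpha}$ regularity in $(t,x)$ with uniform-in-$l$ control of the Hölder norms, in particular uniform bounds for $\partial_t u$, $\partial_x u$ and $\partial_x^2 u$ on any $Q'$.

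For the second stage, fix $l,l'\in(0,K)$ and set $v(t,x):=u(t,x,l)-u(t,x,l')$. Subtracting the two instances of \eqref{eq:lemme-2.3} shows that $v$ solves
\[
\partial_t v - a(t,x,l)\,\partial_x^2 v + b(t,x,l)\,\partial_x v + c(t,x,l)\,v \;=\; g(t,x),
\]
with
\begin{align*}
g(t,x) \;=\;& \bigl[a(t,x,l)-a(t,x,l')\bigr]\partial_x^2 u(t,x,l') - \bigl[b(t,x,l)-b(t,x,l')\bigr]\partial_x u(t,x,l') \\
& -\bigl[c(t,x,l)-c(t,x,l')\bigr]u(t,x,l') + f(t,x,l)-f(t,x,l').
\end{align*}
Using the $\alpha/2$-Hölder regularity of $a,b,c,f$ in the $l$-variable together with the uniform-in-$l$ bounds on $u(\cdot,\cdot,l')$ and its first two spatial derivatives obtained in the first stage, each bracket is controlled by $|l-l'|^{\alpha/2}$ in $\mathcal{C}^{\alpha/2,\alpha}(Q')$, so that $|g|_{\mathcal{C}^{\alpha/2,\alpha}(Q')}\leq C\,|l-l'|^{\alpha/2}$. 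Applying the same interior Schauder estimate to $v$ on a slightly smaller cylinder $Q''\Subset Q'$ gives $|v|_{\mathcal{C}^{1+\alpha/2,2+\alpha}(Q'')}\leq C\,|l-l'|^{\alpha/2}$, which translates at once into $\alpha/2$-Hölder continuity in $l$ for $u$, $\partial_t u$, $\partial_x u$ and $\partial_x^2 u$.

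I expect the main technical obstacle to be not the abstract strategy but the careful verification that the Schauder constant in the first stage can really be taken independent of $l$; this requires tracking the dependence of the constant on the Hölder norms of the coefficients and on $\underline{a}$, and exploiting that both are controlled globally by the assumed regularity class $\mathcal{C}^{\alpha/2,\alpha,\alpha/2}$. A secondary subtlety is that the starting regularity is only $u\in W^{1,2,0}_2\cap \mathcal{C}^{\alpha/2,\alpha,\alpha/2}$, so one should first justify that $u$ is an \emph{almost-everywhere} solution of \eqref{eq:lemme-2.3} on interior subdomains (by a standard test-function density argument), before the Schauder bootstrapping can be invoked in pointwise form.
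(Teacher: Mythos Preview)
Your approach is essentially the same as the paper's: freeze $l$ and apply interior parabolic Schauder uniformly in $l$, then apply Schauder to the difference $u(\cdot,\cdot,l)-u(\cdot,\cdot,l')$ with the forcing controlled by $|l-l'|^{\alpha/2}$. The paper organizes this as a ``Step~1 Schauder-type estimate'' followed by ``Step~2 interior regularity for weak solutions,'' but the substance matches yours.

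The one place where the paper invests noticeably more effort is precisely what you flag as a ``secondary subtlety.'' Passing from the three-variable weak formulation to a per-$l$ classical solution is not entirely routine: Fubini only gives that $u(\cdot,\cdot,l)$ is a $W^{1,2}_2$ weak solution of the two-variable problem for \emph{almost every} $l$, and one must then upgrade this to every $l$ using the assumed continuity of $u$ in $l$. The paper handles this by constructing auxiliary Dirichlet problems on interior cylinders with regularized boundary data $u^n$, producing classical solutions $v^n$, extracting a limit $v$ via Ascoli and the Schauder estimate of Step~1, and finally identifying $u=v$ by testing against a countable dense set in $l$ and invoking weak uniqueness together with the joint continuity of $u$ and $v$. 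Your plan would need a comparable argument to be complete, but you have correctly located where it goes.
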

Remark that one could choose $\mathcal{C}^{\alpha,\beta,\gamma}\big((0,T)\times(0,R)\times (0,K)\big)$ as the class of regularity for the coefficients and free terms and show that the solution belongs to $\mathcal{C}^{1+\alpha,2+\beta,\gamma}\big((0,T)\times(0,R)\times (0,K)\big)$. For the reader's convenience, we have used here the classical terminology given in \cite{pde para}, pointing towards a possible extension of the well-posedness of similar systems as \eqref{eq : pde with l} to the quasi-linear framework.

%%%%%%%%%%%%%%%%%%%%%%%%%%%%%%%%%%%%%%%%%%%%%%%%%%%%%%%%%%%%%%%
%%%%%%%%%%%%DEBUT PREUVE LEMME 2.3 %%%%%%%%%%%%%%%%%%%%%%%%%%%%%%%%
%%%%%%%%%%%%%%%%%%%%%%%%%%%%%%%%%%%%%%%%%%%%%%%%%%%%%%%%%%%%%%%%

\begin{proof}

{\it Introduction: a short reminder for the interior regularity of weak solutions in the classical case}

When there is no dependency with respect to the variable $l$, results on the interior regularity of weak solutions of parabolic equation may be found for example in Theorem 12.1 III of \cite{pde para}. For the convenience of the reader, let us provide a short remainder of the main ideas that lead to the result of the classical case given in Theorem 12.1 III of \cite{pde para}, before getting into all the details of the proof. 

Let us recall that, if $u\in \mathcal{C}^{1+\frac{\alpha}{2},2+\alpha}\big((0,T)\times (0,R)\big)$ stands for some classical solution of the following parabolic problem
\begin{eqnarray}
\label{eq:para-gen}
&\partial_tu(t,x)-a(t,x)\partial_x^2u(t,x)+b(t,x)\partial_xu(t,x)+c(t,x)u(t,x)=f(t,x).
\end{eqnarray}
with smooth data given in the domain $(0,T)\times (0,R)$, then the classical Schauder's estimate reads:
\begin{equation}
\label{Schauder-estimate}
\|u\|_{\mathcal{C}^{1+\alpha,2+\alpha}(\mathcal{O} )}\leq C\big(\|f\|_{\mathcal{C}^{\frac{\alpha}{2},\alpha}(\mathcal{U}
)}+\|u\|_{L_{\infty}(\mathcal{U} )}\big),
\end{equation}
for any open smooth domains $\mathcal{O}\subset \subset \mathcal{U} \subset (0,T)\times (0,R)$ (see for e.g. \cite{Krylov-2} Section 10 Chapter 8).

Now, assume that $w$ is some continuous representative of a generalized solution of the last problem \eqref{eq:para-gen} and that $w$ belongs to the class $W^{1,2}\big((0,T)\times (0,R)\big)\cap L_{\infty}\big((0,T)\times (0,R)\big)$.

Associated to $w$, we introduce the following parabolic problem:
\begin{eqnarray*}
&\partial_tv^n(t,x)-a(t,x)\partial_x^2v^n(t,x)+b(t,x)\partial_xv^n(t,x)+c(t,x)v^n(t,x)=f(t,x),~~(t,x)\in \mathcal{U},\\
&v^n(t,x)=w^n(t,x),~~(t,x)\in  {}_L\partial\mathcal{U}
\end{eqnarray*}
where $_L\partial{\mathcal U}$ denotes the lateral boundary surface of ${\mathcal U}\subset (0,T)\times (0,R)$.

Here, in the setting of this classical parabolic Dirichlet problem, we have regularized the value of $w$ - for instance by standardly using a family of convolution kernels $(\xi^n)$ that tend weakly to the Dirac mass - in order to ensure the regularity of the solution $v^n$ at the boundary. Classical arguments guarantee that the solution $v^n$ is in the class $\mathcal{C}^{1+\frac{\alpha}{2},2+\alpha}(\mathcal{U})$. Then, the classical Schauder's estimates (written for $v^n$) combined with the use of Ascoli's theorem ensure that -- up to a subsequence -- the sequence $(v^n)$ converges locally uniformly to $v\in \mathcal{C}^{1+\frac{\alpha}{2},2+\alpha}(\mathcal{U})$ solution of
\begin{eqnarray*}
&\partial_tv(t,x)-a(t,x)\partial_x^2v(t,x)+b(t,x)\partial_xv(t,x)+c(t,x)v(t,x)=f(t,x),~~(t,x)\in \mathcal{U},\\
&v(t,x)=w(t,x),~~(t,x)\in  {}_L\partial\mathcal{U}.
\end{eqnarray*}
Moreover, by standard convergence arguments, $v$ itself satisfies a Schauder's type estimate of the form \eqref{Schauder-estimate}.

Now, if we know that there exists only one single generalized solution of this last problem, we have necessarily that $v$ is equal to $w$ on $\mathcal{U}$ (first almost everywhere and then everywhere). Due to the Schauder's estimate for $v$, for any open set $\mathcal{O}$ that is strictly included in the domain $(0,T)\times (0,R)$, $v$ has a regularity in the class $\mathcal{C}^{1+\frac{\alpha}{2},2+\alpha}(\mathcal{O})$ -- which yields in turn, by extending $\mathcal{O}$, that $w\in \mathcal{C}^{1+\frac{\alpha}{2},2+\alpha}\big((0,T)\times (0,R)\big)$. Hence, we conclude as in Theorem 12.1 III of \cite{pde para} for the classical interior regularity of the weak solution $w$.
\medskip
\medskip

In order to adapt these arguments to our setting, we see that the key point is to obtain a Schauder's type estimate for the parametric parabolic problem (that involves the variable $l$).

{\it Step 1. Proof of a Schauder's type estimate}

In the sequel and for the proof itself we consider the following data $(a,b,c,f)\in \mathcal{C}^{\frac{\alpha}{2},\alpha,\frac{\alpha}{2}}\big([0,T]\times [0,R]\times [0,K]\big)$ and we assume the ellipticity assumption for the leading coefficient $a$: $$\inf_{(t,x,l)} a(t,x,l) \ge \underline{a}>0.$$

We begin to prove that if $v\in  W^{1,2,0}_2\big((0,T)\times(0,R)\times (0,K)\big)\cap \mathcal{C}^{\frac{\alpha}{2},\alpha,\frac{\alpha}{2}}\big([0,T]\times [0,R]\times [0,K]\big)$ is such that for each $l\in [0,K]$, the map $v(\cdot, \cdot\,\,, l)$ is a {\it classical  $\mathcal{C}^{1+\frac{\alpha}{2},2+\alpha}\big((0,T)\times (0,R)\big)$ solution} of the following linear parabolic problem posed on the domain $(0,T)\times (0,R)$:
\begin{eqnarray}\label{eq : EDP parametr}
&\nonumber \partial_tv(t,x,l)-a(t,x,l)\partial_x^2v(t,x,l)+b(t,x,l)\partial_xv(t,x,l)+\\
&c(t,x,l)v(t,x,l)=f(t,x,l),~~\forall (t,x)\in (0,T)\times (0,R),
\end{eqnarray}
then the map $(t,x,l)\mapsto v(t,x,l)$ belongs to the class $\mathcal{C}^{1+\frac{\alpha}{2},2+\alpha,\frac{\alpha}{2}}\big((0,T)\times (0,R)\times (0,K)\big)$. Moreover, under the same assumptions, we provide a Schauder's type estimate: for any open set $\mathcal{O} \subset \subset (0,T)\times (0,R)\times (0,K)$ strictly separated from the boundary of the domain $(0,T)\times (0,R)\times (0,K)$ by a positive distance $\delta >0$, there exists a positive  constant $C>0$ depending only on the data $\big(\delta,\alpha,\underline{a},\|a\|_{\mathcal{C}^{\frac{\alpha}{2},\alpha,\frac{\alpha}{2}}},\|b\|_{\mathcal{C}^{\frac{\alpha}{2},\alpha,\frac{\alpha}{2}}},\|c\|_{\mathcal{C}^{\frac{\alpha}{2},\alpha,\frac{\alpha}{2}}},T\big)$, such that: 
\begin{eqnarray}\label{eq : Schauder estimate para}
\|v\|_{\mathcal{C}^{1+\alpha,2+\alpha,\frac{\alpha}{2}}(\mathcal{O} )}\leq C\big(\|f\|_{\mathcal{C}^{\frac{\alpha}{2},\alpha,\frac{\alpha}{2}}}+\|v\|_{\mathcal{C}^{\frac{\alpha}{2},\alpha,\frac{\alpha}{2}}}\big).
\end{eqnarray}

We prove \eqref{eq : Schauder estimate para}. For any $l \in [0,K]$ and any open set $\mathcal{V}$ included in the domain $[0,T]\times [0,R]\times [0,K]$, we will denote by:
$$\mathcal{V}_l:=\Big\{(t,x)\in [0,T]\times [0,R],~~(t,x,l)\in \mathcal{V}\Big\}$$
the $l$-level set of $\mathcal{V}$. Clearly, under this notation $\mathcal{V}_l$ is an open set of $[0,T]\times [0,R]$. 

Let $\mathcal{O}$ be then an open smooth domain strictly included in $(0,T)\times (0,R)\times (0,K)$ and let $\mathcal{W}$ a second open domain also
strictly included in $(0,T)\times (0,R)\times (0,K)$ such that $\mathcal{W}$ contains strictly $\mathcal{O}$ with $\mathcal{O}$ well separated from $\partial\mathcal{W}$ (we write in this case $\mathcal{O}\subset\subset \mathcal{W}$). The classical results given by the Schauder's estimates (see for e.g. \cite{Krylov-2}) lead to the existence of a constant $C(l)$ depending on the norm of the data $(a,b,c,f)(.,l)$ in the class $\mathcal{C}^{\frac{\alpha}{2},\alpha}\big([0,T]\times [0,R]\big)$, and $(\underline{a},\delta(l):=dist(\mathcal{O}_l,\mathcal{W}_l),T)$, such that:
\begin{eqnarray}\label{eq: Schauder estimat parametric}
\forall l\in[0,K],~~\|v(\cdot,l)\|_{\mathcal{C}^{1+\frac{\alpha}{2},2+\alpha}(\mathcal{O}_l)} \leq C(l)\big(\|f(\cdot, l)\|_{\mathcal{C}^{\frac{\alpha}{2},\alpha}(\mathcal{W}_l)}+\|v(\cdot,l)\|_{\mathcal{C}^0(\mathcal{W}_l)}\big).
\end{eqnarray}
 By giving a closer look to the estimations given in \cite{Krylov-2} Chapter 8 Section 10 (see also \cite{pde para} IV-\S 10, but the dependence of the constant w.r.t to the distance to the boundary is less apparent), we observe the non decreasing behavior of $C(l)$ with respect to $\delta(l):=dist(\mathcal{O}_l,\mathcal{W}_l)$; since
$$\forall l\in[0,K],~~\delta(l)\ge \gamma :=(dist(\mathcal{O},\mathcal{W}))>0,$$
we see that we are allowed to choose $C(l)=C>0$ independent of $l \in [0.K]$ in \eqref{eq: Schauder estimat parametric}.

Fix now $(l,q)\in [0,K]$ and denote by $v(\cdot, l)$ and $v(\cdot, q)$ two  solutions of the parametric problem \eqref{eq : EDP parametr}, with parameters $l$ and $q$. Remark that $v(\cdot, l)-v(\cdot, q)$ solves the following parabolic problem with unknown function $w$:

$\forall (t,x)\in (0,T)\times (0,R)$,
\begin{eqnarray}\label{eq : Diff EDP parametr}
&\nonumber \partial_t w(t,x)-a(t,x,l)\partial_x^2w(t,x)+b(t,x,l)\partial_xw(t,x)+c(t,x,l)w(t,x)=F(t,x,l,q)
\end{eqnarray}
where in the last equation the expression of the free term $F$ is given by:
\begin{eqnarray*}
&F(t,x,l,q)=-\big(a(t,x,q)-a(t,x,l)\big)\partial_x^2v(t,x,q) +\big(b(t,x,q)-b(t,x,l)\big)\partial_xv(t,x,q)+\\
&\big(c(t,x,q)-c(t,x,l)\big)v(t,x,q)+f(t,x,l)-f(t,x,q).
\end{eqnarray*}
Using the classical Schauder's estimate on the following open sets:
$$(\mathcal{O}_l\cup \mathcal{O}_q) \subset \subset (\mathcal{W}_l\cup \mathcal{W}_q),$$
we see that there exists a constant $C>0$, independent of $(l,q)$ such that:
\begin{eqnarray}\label{eq: Schauder dedoubler}
\|v(\cdot,l)-v(\cdot,q)\|_{\mathcal{C}^{1+\frac{\alpha}{2},2+\alpha}(\mathcal{O}_l\cup \mathcal{O}_q)} \leq C\big(\|F(\cdot, l,q)\|_{\mathcal{C}^{\frac{\alpha}{2},\alpha}(\mathcal{W}_l\cup \mathcal{W}_q)}+\|v(\cdot,l)-v(\cdot, q)\|_{\mathcal{C}^0(\mathcal{W}_l\cup \mathcal{W}_q)}\big).~~
\end{eqnarray}
Recall that by assumption, $v\in \mathcal{C}^{1+\frac{\alpha}{2},2+\alpha,\frac{\alpha}{2}}\big([0,T]\times [0,R]\times [0,K]\big)$. So that:
$$\|v(\cdot,l)-v(\cdot,q)\|_{\mathcal{C}^0(\mathcal{W}_l\cup \mathcal{W}_q)}\leq |l-q|^{\frac{\alpha}{2}}\|v\|_{\mathcal{C}^{\frac{\alpha}{2},\alpha,\frac{\alpha}{2}}([0,T]\times [0,R]\times [0,K])}.$$
From the assumptions of the data, remark also that
\begin{eqnarray*}
\|F(\cdot,l,q)\|_{\mathcal{C}^{\frac{\alpha}{2},\alpha}(\mathcal{W}_l\cup \mathcal{W}_q)}\leq |l-q|^{\frac{\alpha}{2}}C\|v(\cdot,q)\|_{\mathcal{C}^{1+\frac{\alpha}{2},2+\alpha}(\mathcal{W}_l\cup \mathcal{W}_q)},
\end{eqnarray*}
for some other constant $C>0$, depending only on the norm of the data.
But, applying the classical Schauder's estimates for $v(\cdot,q)$ on the domains
$$\mathcal{W}_l\cup \mathcal{W}_q\subset \subset [0,T]\times[0,R],$$
we have:
\[
\|v(\cdot,q)\|_{\mathcal{C}^{1+\frac{\alpha}{2},2+\alpha}(\mathcal{W}_l\cup \mathcal{W}_q)}\leq \bar{C}\Big(\|f\|_{\mathcal{C}^{\frac{\alpha}{2},\alpha,\frac{\alpha}{2}}([0,T]\times [0,R]\times [0,K])}+\|v\|_{\mathcal{C}^0([0,T]\times [0,R]\times [0,K])}\Big),
\]
where the constant $\bar{C}>0$ is independent of $q$. 

In turn, we obtain from \eqref{eq: Schauder dedoubler} that
\begin{align*}
&\|v(\cdot,l)-v(\cdot,q)\|_{\mathcal{C}^{1+\frac{\alpha}{2},2+\alpha}(\mathcal{O}_l\cup \mathcal{O}_q)}\\ 
&\leq M|l-q|^{\frac{\alpha}{2}}\Big(\|f\|_{\mathcal{C}^{\frac{\alpha}{2},\alpha,\frac{\alpha}{2}}([0,T]\times [0,R]\times [0,K])}+\|v\|_{\mathcal{C}^{\frac{\alpha}{2},\alpha,\frac{\alpha}{2}}([0,T]\times [0,R]\times [0,K])}\Big),
\end{align*}
where the constant $M>0$ is independent of $(l,q)$ and depends only on:
$$\big(\delta,\underline{a},\|a\|_{\mathcal{C}^{\frac{\alpha}{2},\alpha,\frac{\alpha}{2}}},\|b\|_{\mathcal{C}^{\frac{\alpha}{2},\alpha,\frac{\alpha}{2}}},\|c\|_{\mathcal{C}^{\frac{\alpha}{2},\alpha,\frac{\alpha}{2}}},T\big)$$
(Recall that $\delta >0$ is the strictly positive distance separating $\mathcal{O}$ from the boundary of $[0,T]\times [0,R]\times [0,K]$). 

We obtain therefore that:
\begin{eqnarray*}
\sup \left \{\displaystyle \frac{\|v(\cdot,l)-v(\cdot,q)\|_{\mathcal{C}^{1+\frac{\alpha}{2},2+\alpha}(\mathcal{O}_l\cup \mathcal{O}_q)}}{|l-q|^{\frac{\alpha}{2}}}~\Big |~(l,q)\in [0,K]^2,~~l\neq q\right \}\\
\hspace{1,4 cm}\leq M\Big(\|f\|_{\mathcal{C}^{\frac{\alpha}{2},\alpha,\frac{\alpha}{2}}([0,T]\times [0,R]\times [0,K])}+\|v\|_{\mathcal{C}^{\frac{\alpha}{2},\alpha,\frac{\alpha}{2}}([0,T]\times [0,R]\times [0,K])}\Big). 
\end{eqnarray*}
Now since
$$\mathcal{O}=\bigcup_{l \in [0,K]}\mathcal{O}_l,$$
we can conclude finally that $v \in \mathcal{C}^{1+\frac{\alpha}{2},2+\alpha,\frac{\alpha}{2}}\big(\mathcal{O}\big)$ and that there exists a positive constant $C>0$, depending only on the data $\big(\delta,\alpha,\underline{a},\|a\|_{\mathcal{C}^{\frac{\alpha}{2},\alpha,\frac{\alpha}{2}}},\|b\|_{\mathcal{C}^{\frac{\alpha}{2},\alpha,\frac{\alpha}{2}}},\|c\|_{\mathcal{C}^{\frac{\alpha}{2},\alpha,\frac{\alpha}{2}}},T\big)$, such that: 
\begin{eqnarray}
\label{eq:schauder-repetition}
\|v\|_{\mathcal{C}^{1+\alpha,2+\alpha,\frac{\alpha}{2}}(\mathcal{O} )}\leq C\big(\|f\|_{\mathcal{C}^{\frac{\alpha}{2},\alpha,\frac{\alpha}{2}}}+\|u\|_{\mathcal{C}^{\frac{\alpha}{2},\alpha,\frac{\alpha}{2}}}\big),
\end{eqnarray}
namely that \eqref{eq : Schauder estimate para} holds true. Thus we have proved \eqref{eq : Schauder estimate para} and the arbitrary choice of the open set $\mathcal{O}$ allows to state  finally that $v$ is in the class $\mathcal{C}^{1+\frac{\alpha}{2},2+\alpha,\frac{\alpha}{2}}\big((0,T)\times (0,R)\times (0,K)\big)$.
\medskip
\medskip

{\it Step 2. Proof of the interior regularity for weak solutions of \eqref{eq:lemme-2.3}}

We are now in position to adapt the arguments exposed in the introduction of the proof to our context, taking into account the dependency on $l$. 

Let $u \in W^{1,2,0}_2\big((0,T)\times(0,R)\times (0,K)\big)\cap \mathcal{C}^{\frac{\alpha}{2},\alpha,\frac{\alpha}{2}}\big([0,T]\times [0,R]\times [0,K]\big)$ a generalized solution of the parametric parabolic problem in the statement of the lemma.

In order to adapt the arguments exposed in the introduction of the proof to our context, we introduce naturally the following parabolic problem with parameter $l\in[0,K]$ posed on some connected open subset $\mathcal{U} = (s,s')\times (z,r)$ satisfying ${\mathcal U}\subset \subset (0,T)\times (0,R)$:
\begin{align}
\label{eq:regularization-u-v}
&\partial_tv^n(t,x,l)-a(t,x,l)\partial_x^2v^n(t,x,l)+b(t,x,l)\partial_xv^n(t,x,l)+c(t,x,l)v^n(t,x,l)\nonumber\\
&\hspace{7,4 cm}=f(t,x,l),~~(t,x)\in \mathcal{U},\nonumber\\
&v^n(t,x,l)=u^n(t,x,l),~~(t,x)\in {}_L\partial\mathcal{U},
\end{align}
with ${}_L\partial\mathcal{U} = \partial \mathcal{U}\setminus (\{s'\}\times [z,r])$ and where we have regularized at the lateral boundary ${}_L\partial\mathcal{U}$ the value of $u$ by convolution in the domain $(0,T)\times (0,R)$, more precisely:
\begin{align}\label{eq: regula u Schauder convo}
\nonumber &\forall (t,x)\in[0,T)\times[0,R],~~u^n(t,x,l)=\ds \int_{\R^2}\overline{u}(\theta,y,l)\rho_n(t-\theta,x-y)d\theta dy,~~l\in[0,K],\\
&\forall (\theta,y)\in \R^2,~~\overline{u}(\theta,y,l)=\begin{cases}
 u(\theta,y,l),~~\text{if}~~(\theta,y)\in [0,T]\times[0,R],\\
 0~~\text{else}
\end{cases},~~l\in[0,K],
\end{align}
and $\rho_n$ is a classical kernel of convolution in $\R^2$.
Note that classical estimates for solutions of standard parabolic problems ensure that for any fixed $l\in [0,K]$ there exists a unique classical smooth solution $v^n(\cdot, l)\in \mathcal{C}^{\frac{\alpha}{2},\alpha,\frac{\alpha}{2}}({\mathcal{U}})$ to the ladder Dirichlet boundary parabolic problem. Moreover, we have the uniform estimate
$\|v^n\|_{\mathcal{C}^{\frac{\alpha}{2},\alpha,\frac{\alpha}{2}}({\mathcal{U}}\times[0,K])} \leq C(n)$
where the constant $C(n)$ shows a dependency on $n$ only through the value of the parabolic boundary data $\|u^n\|_{\mathcal{C}^{\frac{\alpha}{2},\alpha,\frac{\alpha}{2}}({\mathcal{U}}\times[0,K])}$ (see for e.g. the classical results on {\it Solvability
of Problems 5.4' and 5.4} in \cite{pde para}). 
Now note that the regularized map $u^n(\cdot, l)$ belongs to the class $\mathcal{C}^\infty \big([0,T]\times[0,R]\big)$ and that it is always possible to perform the convolution in such a way that $u^n$ satisfies
\begin{eqnarray*}
\sup_{n \ge 0}\|u^n\|_{\mathcal{C}^{\frac{\alpha}{2},\alpha,\frac{\alpha}{2}}([0,T]\times[0,R]\times[0,K])} \leq \|u\|_{\mathcal{C}^{\frac{\alpha}{2},\alpha,\frac{\alpha}{2}}([0,T]\times[0,R]\times[0,K])}.
\end{eqnarray*}
Thus, the result of this discussion gives us assurance that there exists a finite constant $C:=\sup_{n\geq 0}C(n)<+\infty$ independent of $n$ such that
\begin{eqnarray}
\label{eq ; uniform born convo}
\sup\limits_{n\geq 0}\|v^n\|_{\mathcal{C}^{\frac{\alpha}{2},\alpha,\frac{\alpha}{2}}(\overline{{\mathcal{U}}}\times[0,K])} \leq C.
\end{eqnarray}

Let $\mathcal{O}:= (t_1,t_2)\times (r_1,r_2)\times (l_1,l_2)\subset \subset \mathcal{U}\times (0,K)$.

With the same arguments used to prove the Schauder's estimates \eqref{eq:schauder-repetition}, it is not hard to check that the map $(t,x,l)\mapsto v^n(t,x,l)$ has a regularity in the class $\mathcal{C}^{1+\frac{\alpha}{2},2+\alpha,\frac{\alpha}{2}}\big(\mathcal{O}\big)$ and satisfies itself a Schauder's type estimate 
\begin{eqnarray}
\label{eq:schauder-repetition-2}
\|v^n\|_{\mathcal{C}^{1+\alpha,2+\alpha,\frac{\alpha}{2}}(\mathcal{O} )}\leq M\big(\|f\|_{\mathcal{C}^{\frac{\alpha}{2},\alpha,\frac{\alpha}{2}}(\overline{\mathcal{U}}\times [0,K])}+\|v^n\|_{\mathcal{C}^{\frac{\alpha}{2},\alpha,\frac{\alpha}{2}}(\overline{\mathcal{U}}\times [0,K])}\big),
\end{eqnarray}
where as before, $M>0$ stands for some constant depending only on the data.

Hence, just like for the remainder in the introduction of the proof, combining \eqref{eq ; uniform born convo} together with Schauder's estimates \eqref{eq:schauder-repetition-2} allows to apply Ascoli's theorem: up to a subsequence $(v^{n_k})$ converges uniformly in the class $\mathcal{C}^{1+\frac{\alpha}{2},2+\alpha,\frac{\alpha}{2}}\big(\mathcal{O}\big)$ to a function $v$, which solves 
\begin{align}
\label{eq ; sol limite }
\partial_tv(t,x,l)-a(t,x,l)\partial_x^2v(t,x,l)+b(t,x,l)\partial_xv(t,x,l)+c(t,x,l)v(t,x,l)=\nonumber\\
f(t,x,l),~~(t,x)\in (t_1,t_2)\times(r_1,r_2),
%%&v(t,x,l)=u(t,x.l),~~(t,x)\in  \partial \Big((t_1,t_2)\times(r_1,r_2)\Big)
\end{align}
for any $l \in (l_1,l_2).$

This solution $v$ is also a generalized solution in the sense that:
\begin{eqnarray}
\label{eq:generalized-solution-O}
\int\!\!\int\!\!\int_{\mathcal{O}}\big(\partial_t v(t,x,l)-a(t,x,l)\partial_x^2v(t,x,l)+b(t,x,l)\partial_xv(t,x,l)+
\nonumber\\
c(t,x,l)v(t,x,l)-f(t,x,l)\big)\phi(t,x,l)dtdxdl=0,
\end{eqnarray}
for any $\phi \in \mathcal{C}^{\infty}_c\big(\overline{\mathcal{O}}\big)$ (the class of infinite differential function with compact support strictly included in $\mathcal{O}$). 

Formally speaking, the previous limit $v$ depends on the set $\mathcal{O}$:  in order to emphasize its dependence on ${\mathcal{O}}$ let us denote it $v_{\mathcal{O}}$ for a moment. 
Since $\mathcal{O}$ may be arbitrarily taken in $\mathcal{U}$, we may consider $(\mathcal{O}_p)$ an increasing sequence of open sets converging to $\mathcal{U}$ as $p$ tends to infinity. (Recall that $\mathcal{U} = (s,s')\times (z,r)$), so one may define for instance:
\begin{eqnarray*}
 \forall p \text{ large enough },~~\mathcal{O}_p=(s+\frac{1}{p},s'-\frac{1}{p})\times (z+\frac{1}{p},r-\frac{1}{p}). 
\end{eqnarray*}
The preceding shows that we can attach to this sequence a doubly indexed subsequence 

$\pare{v^{n_k^{(p)}}}_{(k,p)\in \N^\ast\times \N^\ast}$, which satisfies that for any $p$, $\lim_{k\rightarrow + \infty}v^{n_k^{(p)}} = v_{\mathcal{O}_p}$ locally uniformly in $\mathcal{C}^{1+\frac{\alpha}{2},2+\alpha,\frac{\alpha}{2}}\big(\mathcal{O}_p\big)$ with $v_{\mathcal{O}_p}$ verifying \eqref{eq:generalized-solution-O} (with $\mathcal{O}_p$ in place of $\mathcal{O}$) and constructed inductively such that the subsequence $\pare{v^{n_k^{(p+1)}}}_{k\in \N^\ast}$ is itself a subsequence of $\pare{v^{n_k^{(p)}}}_{k\in \N^\ast}$.
Proceeding to a diagonal extraction, we now consider $\pare{v^{n_p^{(p)}}}_{p\in \N^\ast}$. By construction, for any $q\in \N^\ast$,  $\pare{v^{n_p^{(p)}}}_{p\geq q}$ is a subsequence of $\pare{v^{n_k^{(q)}}}_{k\in \N^\ast}$ and as such, the subsequence $\pare{v^{n_p^{(p)}}}_{p\in \N^\ast}$  converges to $v_{\mathcal{O}_q}$ locally uniformly in $\mathcal{C}^{1+\frac{\alpha}{2},2+\alpha,\frac{\alpha}{2}}\big(\mathcal{O}_q\big)$. Since the ladder holds true for any $q$, the family $(v_{\mathcal{O}_q})_q$ has to be consistent and our subsequence $\pare{v^{n_p^{(p)}}}_{p\in \N^\ast}$ converges locally uniformly in the class $\mathcal{C}^{1+\frac{\alpha}{2},2+\alpha,\frac{\alpha}{2}}\big(\mathcal{O}\big)$ to some function $v\in \mathcal{C}^{1+\frac{\alpha}{2},2+\alpha,\frac{\alpha}{2}}\big(\mathcal{O}\big)$ which satisfies \eqref{eq:generalized-solution-O}. This convergence holds for any arbitrary $\mathcal{O} =(t_1,t_2)\times (r_1,r_2)\times (l_1,l_2)\subset \subset \mathcal{U}\times (0,K)$ and we get in fact that $v\in \mathcal{C}^{1+\frac{\alpha}{2},2+\alpha,\frac{\alpha}{2}}\big(\mathcal{U}\times (0,K)\big)$.

Moreover, for any $l\in (0,K)$, the convolution regularization $(u^n(.,l))$ defined in \eqref{eq: regula u Schauder convo}, converges pointwise to $u(.,l)$ (recall that ${\mathcal U}\subset \subset (0,T)\times (0,R)).$ In turn, \eqref{eq:regularization-u-v} shows that $(v^{n^{(p)}_p}(.,l))_p$ converges to $u(.,l)$ on the lateral surface $_{L}\partial U$ ensuring that for all $l\in (0,K)$:~
$$
v(t,x,l) = u(t,x,l),\;\;\;\forall (t,x)\in {}_{L}\partial U.
$$

We now proceed to show that
$$\forall (t,x,l)\in \mathcal{U}\times (0,K),~~u(t,x,l)=v(t,x,l),$$
which will in turn imply finally that $u \in \mathcal{C}^{1+\frac{\alpha}{2},2+\alpha,\frac{\alpha}{2}}\big(\mathcal{U}\times (0,K)\big)$.

Denote by
$$\mathcal{K}=\{l_1,\ldots l_n,\ldots\}$$
a dense countable subset of $(0,K)$. Fix $\overline{l}\in \mathcal{K}$ and let $\{\phi_{\varepsilon}\in \mathcal{C}^\infty\big([0,K]\big), \varepsilon >0\}$ denote a family of smooth functions converging in the sense of distribution to the Dirac distribution $\delta_{\overline{l}}$ as $\varepsilon \searrow 0$. 

Let $\phi\in \mathcal{C}^\infty_c\big((s,s')\times(\ell,r)\big)$ and $\varepsilon>0$. We have  
\begin{eqnarray*}
&\displaystyle \int\!\!\int\!\!\int_{\mathcal{O}} \big(\partial_t v(t,x,l)-a(t,x,l)\partial_x^2v(t,x,l)+b(t,x,l)\partial_xv(t,x,l)+
\\
&c(t,x,l)v(t,x,l)-f(t,x,l)\big)\phi_\varepsilon(l)\phi(t,x)dtdxdl=0,
\end{eqnarray*}
for arbitrary $\mathcal{O}$. By letting $\varepsilon \searrow 0$
\begin{eqnarray*}
&\displaystyle \int_{s}^{s'}\!\!\int_{\ell}^{r}\big(\partial_t v(t,x,\overline{l})-a(t,x,\overline{l})\partial_x^2v(t,x,\overline{l})+b(t,x,\overline{l})\partial_xv(t,x,\overline{l})+
\\
&c(t,x,\overline{l})v(t,x,\overline{l})-f(t,x,\overline{l})\big)\phi(t,x)dtdx=0.
\end{eqnarray*}
With the same arguments, we have also:
\begin{eqnarray*}
&\displaystyle \int_{s}^{s'}\!\!\int_{\ell}^{r}\big(\partial_t u(t,x,\overline{l})-a(t,x,\overline{l})\partial_x^2u(t,x,\overline{l})+b(t,x,\overline{l})\partial_xu(t,x,\overline{l})+
\\
&c(t,x,\overline{l})u(t,x,\overline{l})-f(t,x,\overline{l})\big)\phi(t,x)dtdx=0.
\end{eqnarray*}
Therefore $u(\cdot,\overline{l})$ and $v(\cdot,\overline{l})$ are two classical weak solutions of the same parabolic problem, on the domain $\mathcal{U} = (s,s')\times (\ell,r)$ possessing the same boundary conditions on $_{L}\partial U$. From the weak uniqueness in the class $W^{1,2}_2\big((s,s')\times (\ell,r)\big)$ that follows from our assumptions (see for instance the weak uniqueness result stated in \cite{pde para} Theorem 9.1 Chapiter IV), we deduce that:
$$u(t,x,\overline{l})=v(t,x,\overline{l}),~~dt\otimes dx ,~~\text{almost everywhere in }[s,s']\times [\ell,r].$$
This implies the existence of some negligible set
$\mathcal{N}_{\overline{l}}\subset\subset [s,s']\times [\ell,r],$
such that:
$$dt\otimes dx \big(\mathcal{N}_{\overline{l}}\big)=0,$$
and:
$$\forall (t,x)\in [s,s']\times [\ell,r]\setminus \mathcal{N}_{\overline{l}},~~u(t,x,\overline{l})=v(t,x,\overline{l}).$$
Set:
$$\mathcal{N}=\bigcup_{l_n \in \mathcal{K}}\mathcal{N}_{\overline{l}_n}.$$
We can conclude that:
$$\forall \overline{l}\in \mathcal{K},~~\forall (t,x)\in [s,s']\times [\ell,r]\setminus \mathcal{N},~~u(t,x,\overline{l})=v(t,x,\overline{l}).$$
Using now the key assumption that $u\in \mathcal{C}^{\frac{\alpha}{2},\alpha,\frac{\alpha}{2}}\big([0,T]\times[0,R]\times [0,K]\big)$, we can conclude by the continuity of both $u$ and $v$ with respect to the variables $(t,x)$ that:
$$\forall \overline{l}\in \mathcal{K},~~\forall (t,x)\in [s,s']\times [\ell,r],~~u(t,x,\overline{l})=v(t,x,\overline{l}).$$
The density of $\mathcal{K}$ in $(0,K)$ and the continuity of both $u$ and $v$ with respect to the variable $l$ yield
$$\forall (t,x,l)\in [s,s']\times [\ell,r]\times (0,K),~~u(t,x,l)=v(t,x,l),$$
ensuring that $u\in \mathcal{C}^{1+\frac{\alpha}{2},2+\alpha,\frac{\alpha}{2}}\big(\mathcal{U}\times (0,K)\big)$.

Now observe that $\mathcal{U}$ has been arbitrarily taken in $(0,T)\times (0,R)$, so that in fact $$u\in \mathcal{C}^{1+\frac{\alpha}{2},2+\alpha,\frac{\alpha}{2}}\big((0,T)\times (0,R)\times (0,K)\big),$$ which concludes the proof of the lemma.

\end{proof}

%%%%%%%%%%%%%%%%%%%%%%%%%%%%%%%%%%%%%%%%%%%%%%%%%%%%%%%%%%%%%%%
%%%%%%%%%%%%FIN PREUVE LEMME 2.3%%%%%%%%%%%%%%%%%%%%%%%%%%%%%%%%
%%%%%%%%%%%%%%%%%%%%%%%%%%%%%%%%%%%%%%%%%%%%%%%%%%%%%%%%%%%%%%%%

\subsection{Assumptions and main results}\label{subsec: Main results}

\medskip

In this subsection, we introduce the data involved in the PDE system \eqref{eq : pde with l} with the required assumptions and we state our main Theorem \ref{th : exis para with l}. Next, we proceed to the proof of a comparison theorem for the PDE system \eqref{eq : pde with l}. 
\subsubsection{Existence and uniqueness for a Parabolic PDE with Kirchhoff's local time condition}
\medskip

\textbf{For the rest of these notes, we fix $\alpha \in(0,1)$}.

We introduce the following data:
$$\mathfrak{D}:=\begin{cases}
\Big(a_i \in W^{1,\infty}\big([0,T]\times [0,R]\times [0,K],\R_+\big)\Big)_{i\in[\![1,I]\!]}\\
\Big(b_i \in W^{1,\infty}\big([0,T]\times [0,R]\times [0,K],\R\big)\Big)_{i\in[\![1,I]\!]}\\
\Big(c_i \in W^{1,\infty}\big([0,T]\times [0,R]\times [0,K],\R\big)\Big)_{i\in[\![1,I]\!]}\\
\Big(f_i \in W^{1,\infty}\big([0,T]\times [0,R]\times [0,K],\R\big)\Big)_{i\in[\![1,I]\!]}\\
\Big(\alpha_i \in W^{1,\infty}\big([0,T]\times [0,K],\R_+\big)\Big)_{i\in[\![1,I]\!]}\\ 
r\in W^{1,\infty}\big([0,T]\times [0,K],\R_+\big)\\
\phi \in W^{1,\infty}\big([0,T]\times [0,K],\R\big)\\
\psi \in \mathcal{C}^{0,1}\big([0,T]\times \mathcal{N}_R\big)\cap \mathcal{C}^{1,2}
\big((0,T)\times \overset{\circ}{\mathcal{N}^*_R}\big)\hspace{0,4 cm} \text{(with ${\mathcal{N}^*_R}$ defined in \eqref{eq:reseau-etoile})}\\
g\in \mathfrak{Lip}_{\{0\}}^{2,0}\big(\Omega\big)\hspace{0,4 cm}\text{(where the definition of $\mathfrak{Lip}_{\{0\}}^{2,0}\big(\Omega\big)$ is given after Definition \ref{definition-espace-solutions})}.
\end{cases}.
$$
We assume that the data $\mathfrak{D}$ satisfies the following assumption:
$$\textbf{Assumption } (\mathcal{H})$$
\noindent 
a) Ellipticity condition for the terms $\big(a_i,\alpha_i\big)_{i\in [\![1,I]\!]}$:
\begin{align*}
&(i)~~\exists~\underline{a}>0,~~\forall i \in [\![1,I]\!], ~~\forall (t,x,l)\in [0,T]\times [0,R]\times[0,K],~~a_i(t,x,l) \ge \underline{a},\\
&(ii)~~\exists~\underline{\alpha}>0,~~\forall i \in [\![1,I]\!], ~~\forall (t,l)\in [0,T]\times[0,K],~~\alpha_i(t,l) \ge \underline{\alpha},
\end{align*}
\noindent b) Compatibility conditions at the boundaries:
\begin{align*}
&(i)~~\partial_lg(0,l)+\sum_{i=1}^I\alpha_i(0,l)\partial_xg_i(0,l)-r(0,l)g(0,l)=\phi(0,l),~~l\in[0,K),\\
&(ii)~~\partial_xg(R,l)=0,~~l\in [0,K),\\
&(iii)~~g(x,K)=\psi_i(0,x),~~x\in [0,R].
\end{align*}
We state the main central result of this work, which asserts the unique solvability of the parabolic linear PDE system \eqref{eq : pde with l} posed on $\mathcal{N}_R$ and having a dynamical '{\it local-time Kirchhoff's boundary condition}' at the junction point $\{0\}$.

Remember the definition of $\mathfrak{C}^{1+\frac{\alpha}{2},2+\alpha,\frac{\alpha}{2}}_{\{0\}} \big(\Omega_T\big)$ given in Definition \ref{definition-espace-solutions} and the definition of $\mathfrak{C}^{1,2,0}_{\{0\}} \big(\Omega_T\big)$ given right after it.
\begin{Theorem}\label{th : exis para with l}
Assume that the data $\mathfrak{D}$ satisfies assumption $(\mathcal{H})$. Then, the system \eqref{eq : pde with l} is uniquely solvable in the class $\mathfrak{C}^{1+\frac{\alpha}{2},2+\alpha,\frac{\alpha}{2}}_{\{0\}} \big(\Omega_T\big)$. 
\end{Theorem}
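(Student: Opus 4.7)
Uniqueness is immediate from the comparison theorem (Theorem \ref{th : para comparison th with l}) applied to the difference of two solutions, which solves the homogeneous system with null data. The substantial work lies in existence, which I would prove through the parabolic discretization scheme in the variable $l$ announced in the introduction. The guiding idea is to treat $l$ as a pseudo-time running backward from the Dirichlet boundary $l = K$ down to $l = 0$. On a uniform grid $l_k = K - kh$, $k = 0, \dots, N$, with $h = K/N$, I would replace the derivative $\partial_l u(t,0,l_k)$ in the local-time Kirchhoff condition by a one-sided finite difference involving $u(t,0,l_k)$ and $u(t,0,l_{k-1})$. At each level $l_k$, the problem then reduces to a standard linear parabolic system on the star-shaped network in the variables $(t,x)$ alone, carrying a classical Kirchhoff boundary condition at $\{0\}$ whose source term is prescribed by the previous discrete level.

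At each level the revisited Von Below result from Section \ref{sec : Von Below revisited} would then provide a unique classical solution $u^{h,k}$, together with the quantitative bounds on $u^{h,k}$, $\partial_x u^{h,k}$, $\partial_x^2 u^{h,k}$, $\partial_t u^{h,k}$ and especially on $|\partial_t u^{h,k}(t,0)|$ that the introduction explicitly advertises. Iterating produces a discrete family $(u^{h,k})_{0 \leq k \leq N}$ which I would reassemble into an approximate solution $u^h$ on $\overline{\Omega}_T$ by piecewise affine interpolation in $l$. The heart of the argument consists in deriving estimates on $u^h$ and its derivatives that are uniform in the step size $h$: the $L^\infty$ bound follows from the comparison theorem, while the H\"older control in $(t,x)$ and a uniform bound on the discrete $l$-increments at the junction should be produced by the Von Below machinery combined with the Lipschitz hypotheses in $(\mathcal{H})$ and the compatibility conditions $(\mathcal{H})$-b), the latter ensuring that the scheme starts with a controlled discrete $l$-derivative at $l = K$.

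Once these uniform estimates are in hand, an Arzel\`a--Ascoli extraction yields along a subsequence a limit $u$ that belongs to the required H\"older classes at the boundary of each ray and satisfies the local-time Kirchhoff condition pointwise at $\{0\}$. The interior regularity in the class $\mathcal{C}^{1+\alpha/2, 2+\alpha, \alpha/2}$ on each open ray required by Definition \ref{definition-espace-solutions} is then recovered by invoking Lemma \ref{lem : regu Holder interieure} applied to $u$ restricted to each $\mathcal{R}_i$: the ellipticity of $a_i$ and the $\mathcal{C}^{\alpha/2,\alpha,\alpha/2}$ regularity of the data (inherited from the $W^{1,\infty}$ hypotheses in $\mathfrak{D}$) are precisely the inputs the lemma requires. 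The generalized derivative $\partial_l u \in \bigcap_{q > 1} L^q$ in the interior of each ray is identified as the weak limit of the discrete quotients $(u^{h,k} - u^{h,k-1})/h$, which are uniformly bounded by the aforementioned discrete estimates.

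The main obstacle will be securing a bound uniform in $h$ on the discrete $l$-derivative of $u^h(t,0,\cdot)$ at the junction. This quantity drives the dynamics at $\{0\}$ and, contrary to the classical Kirchhoff setting, no a priori estimate on it is inherited for free from the revisited Von Below theory. I would expect to produce it by differentiating the discrete scheme with respect to the index $k$, obtaining a linear parabolic system with Kirchhoff-type condition for the discrete increment $v^{h,k} = (u^{h,k} - u^{h,k-1})/h$, and then reapplying the comparison theorem together with the Lipschitz regularity of the coefficients and free terms in $l$ to close the bound uniformly in $h$. This is also the step where the compatibility condition $(\mathcal{H})$-b)(i) enters crucially, providing the initial estimate for $v^{h,1}$ needed to run the induction in $k$.
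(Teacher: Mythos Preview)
Your overall architecture matches the paper's proof closely: backward discretization in $l$, application of the revisited Von Below result at each level, uniform estimates leading to Arzel\`a--Ascoli compactness, interior regularity via Lemma~\ref{lem : regu Holder interieure}, and identification of $\partial_l u$ as a weak $L^q$ limit of the discrete quotients.

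There is, however, a genuine misreading of the role of the compatibility condition $(\mathcal{H})$-b)(i). That condition is imposed at $t=0$ (for all $l\in[0,K)$), not at $l=K$. In the paper it is used to make each discrete problem $\mathcal{P}_p$ well-posed in the sense of Theorem~\ref{th: ex sys para}: one must add correction constants $\beta_p^n = n(g(0,l_{p+1})-g(0,l_p)) - \partial_l g(0,l_p)$ to the Kirchhoff source so that the initial data $g(\cdot,l_p)$ satisfies the required compatibility at $(t,x)=(0,0)$. It does \emph{not} provide any initialization for the discrete $l$-derivative at $l=K$, and indeed no such bound is available there: the paper explicitly obtains the uniform estimate on $n|u^{p+1}-u^p|_\infty$ only for $p\in[\![0,n-2]\!]$, noting that the argument fails at $p=n-1$ because $u^n=\psi$ does not solve a parabolic equation of the required form. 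Your inductive scheme in $k$ therefore cannot be launched from $l=K$ as you describe.

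The paper circumvents this by fixing $\overline{K}<K$, taking $n$ large enough that the uncontrolled final step lies beyond $\overline{K}$, and extracting the limit on $[0,T]\times\mathcal{N}_R\times[0,\overline{K}]$; the solution on $[0,K)$ follows by letting $\overline{K}\nearrow K$. Separately, the bound on $n|u^{p+1}-u^p|_\infty$ itself is obtained not by differencing the scheme and running an induction, but by a direct comparison at each fixed level $p$ with super/sub solutions of the form $u^{p+1}(t,x)\pm (t+1)B/n$, exploiting the equation satisfied by $u^{p+1}$ to substitute for $\partial_x^2 u^{p+1}$ and the Lipschitz dependence of the coefficients on $l$.
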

Next, we give the definitions of super and sub solutions for the system \eqref{eq : pde with l}, and we prove a comparison Theorem.
\begin{Definition}\label{def : sur/sous solutions}
We say that $u\in\mathfrak{C}^{1,2,0}_{\{0\}} \big(\Omega_T\big)$ is a super solution (resp. sub solution) of the PDE system \eqref{eq : pde with l} if:
\begin{eqnarray*}
\begin{cases}\partial_tu_i(t,x,l)-a_i(t,x,l)\partial_x^2u_i(t,x,l)
+b_i(t,x,l)\partial_xu_i(t,x,l)+\\
\;\;c_i(t,x,l)u_i(t,x,l)-f_i(t,x,l)\ge 0,~~(\text{resp. }\leq 0),~~(t,x,l)\in (0,T)\times (0,R)\times(0,K)\\
\partial_lu(t,0,l)+\displaystyle \sum_{i=1}^I \alpha_i(t,l)\partial_xu_i(t,0,l)-r(t,l)u(t,0,l)-\phi(t,l)\leq 0,~~ (\text{resp. }\ge 0),\\
\hspace{10,4 cm}(t,l)\in(0,T)\times(0,K)\\
\partial_xu_i(t,R,l)\ge 0,~~ (\text{resp. }\leq 0),~~ (t,l)\in (0,T)\times(0,K)
\end{cases}.
\end{eqnarray*}
\end{Definition}
\begin{Theorem}\label{th : para comparison th with l} Comparison Theorem.\\
Assume that the data $\mathfrak{D}$ satisfies assumptions $(\mathcal{H})$. Let $u\in \mathfrak{C}^{1,2,0}_{\{0\}} \big(\Omega_T\big)$ (resp. $v\in \mathfrak{C}^{1,2,0}_{\{0\}} \big(\Omega_T\big)$) a super solution (resp. a sub solution) of system \eqref{eq : pde with l} satisfying that for all $\big(t,(x,i),l\big)\in [0,T]\times \mathcal{N}_R\times [0,K]$:
$$u_i(0,x,l)\ge v_i(0,x,l)~~\text{and}~~u_i(t,x,K)\ge v_i(t,x,K).$$
Then, for all $\big(t,(x,i),l\big)\in[0,T]\times \mathcal{N}_R\times [0,K]$: $$u_i(t,x,l)\ge v_i(t,x,l).$$
\end{Theorem}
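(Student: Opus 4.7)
The plan is to carry out a parabolic comparison argument of the classical maximum-principle type, adapted to the presence of the new Kirchhoff condition involving $\partial_l$. First, I set $w := v - u$ and subtract the inequalities of Definition~\ref{def : sur/sous solutions}; the $\phi(t,l)$ term cancels on the Kirchhoff line, so that $w \in \mathfrak{C}^{1,2,0}_{\{0\}}(\Omega_T)$ satisfies, on each ray's interior,
\begin{equation*}
\partial_t w_i - a_i \partial_x^2 w_i + b_i \partial_x w_i + c_i w_i \leq 0,
\end{equation*}
at the junction
\begin{equation*}
\partial_l w(t,0,l) + \sum_{i=1}^I \alpha_i(t,l)\partial_x w_i(t,0,l) - r(t,l) w(t,0,l) \geq 0,
\end{equation*}
$\partial_x w_i(t,R,l) \leq 0$ at the Neumann end, and $w \leq 0$ on $\{t=0\} \cup \{l=K\}$. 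The goal is then to show $w \leq 0$ everywhere.

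Next, I would perform the exponential change of variable $W(t,x,l) := e^{-\lambda t - \mu l} w(t,x,l)$ with $\lambda > \|c\|_\infty$ and $\mu < 0$ (this is permitted since $r \geq 0$). A routine computation shifts the zero-order coefficient in the interior inequality to $c_i + \lambda > 0$, and the Kirchhoff inequality becomes
\begin{equation*}
\partial_l W + \sum_{i=1}^I \alpha_i \partial_x W_i + (\mu - r) W \geq 0,
\end{equation*}
with $\mu - r < 0$ uniformly. Since $\mathrm{sign}(W) = \mathrm{sign}(w)$, it suffices to show $W \leq 0$.

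Arguing by contradiction, suppose $M := \max W > 0$, attained at $(t^\star, (x^\star, i^\star), l^\star)$. The boundary assumptions force $t^\star > 0$ and $l^\star < K$. If $x^\star \in (0,R)$ and $l^\star \in (0,K)$, the standard interior maximum relations $\partial_t W \geq 0$, $\partial_x W = 0$, $\partial_x^2 W \leq 0$ applied to the interior inequality yield $(c_{i^\star} + \lambda) M \leq 0$, contradicting $M > 0$. If $x^\star = 0$, each $W_i(t^\star, \cdot, l^\star)$ attains its maximum on $[0,R]$ at the left endpoint, so $\partial_x W_i(t^\star, 0, l^\star) \leq 0$ for every $i$; moreover, $\partial_l W(t^\star, 0, l^\star) \leq 0$ (whether $l^\star \in (0,K)$, in which case the derivative vanishes, or $l^\star = 0$, in which case it is a non-positive right-derivative). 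Plugging into the Kirchhoff inequality gives $\sum_i \alpha_i \partial_x W_i \geq (r - \mu) M - \partial_l W \geq (r - \mu) M > 0$, which is incompatible with the non-positivity of the left-hand side.

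The remaining case $x^\star = R$ is the crux of the proof. Both the Neumann bound $\partial_x W_{i^\star}(t^\star, R, l^\star) \leq 0$ and the interior maximum condition $\partial_x W_{i^\star}(t^\star, R, l^\star) \geq 0$ force vanishing of the derivative, without producing an immediate contradiction. I would handle this with the parabolic Hopf lemma applied to $(t,x) \mapsto W_{i^\star}(t, x, l^\star)$, which is a classical sub-solution of a non-degenerate parabolic equation on $(0,T) \times (0,R)$ (by ellipticity of $a_{i^\star}$ and positivity of $c_{i^\star} + \lambda$). Either the strong parabolic maximum principle propagates the value $M$ backwards along the parabolic component of the maximum set down to $\{t=0\}$, contradicting $W \leq 0$ there; or $W_{i^\star}(\cdot, \cdot, l^\star) < M$ strictly in a one-sided parabolic neighbourhood of $(t^\star, R)$, in which case Hopf's lemma yields $\partial_x W_{i^\star}(t^\star, R, l^\star) > 0$, contradicting the Neumann bound. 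The main obstacle is precisely this Hopf step, together with the borderline regularity at $l^\star = 0$ (where the class $\mathfrak{C}^{1,2,0}_{\{0\}}$ provides only continuity in $l$ away from the junction); this can be circumvented by first running the entire argument on the restricted domain $\{l \in [\eta, K]\}$ with $\eta > 0$ and then letting $\eta \to 0^+$, using the continuity of $w$ up to $l=0$. All cases yielding contradictions, $M \leq 0$ and hence $u \geq v$ on $[0,T] \times \mathcal{N}_R \times [0,K]$.
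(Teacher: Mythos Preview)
Your argument is essentially correct but proceeds along a genuinely different route from the paper. The paper introduces a quadratic weight in the \emph{spatial} variable, setting
\[
\theta_{\lambda,i}(t,x,l)=\exp\Big(-\lambda t-\tfrac{(x-R/2)^2}{2}\Big)\big(v_i-u_i\big),
\]
with no weight in $l$ at all. The point of the $(x-R/2)^2$ term is that it tilts the derivative at both spatial endpoints simultaneously: at a positive maximum located at $x_0=R$ it forces $\partial_x(v_{i_0}-u_{i_0})\ge \tfrac{R}{2}(v-u)>0$, contradicting the Neumann inequality directly, while at $x_0=0$ it forces $\partial_x(v_i-u_i)\le -\tfrac{R}{2}(v-u)<0$ for every $i$, which plugged into the Kirchhoff inequality yields the strict sign needed (since the $\alpha_i$ are bounded below and $r\ge 0$). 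Thus the paper never invokes Hopf's lemma or the strong maximum principle: both lateral boundaries are handled by an elementary first-order computation coming from the weight.

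By contrast, you put the weight in $l$ (to manufacture the strict coefficient $r-\mu>0$ at the junction) and then rely on the parabolic Hopf boundary lemma to dispose of the case $x^\star=R$. This is perfectly legitimate and arguably more modular, but it trades the paper's one-line algebraic trick for a heavier classical tool. Two small points: your reason for restricting to $l\in[\eta,K]$ is slightly misdiagnosed (the real issue is that the interior $\mathcal{C}^{1,2}$ regularity and the PDE/Kirchhoff inequalities are only assumed on the open set $(0,K)$, so at $l^\star=0$ neither $\partial_t,\partial_x^2$ nor the differential inequalities are available), though the cure you propose is the right one and is exactly what the paper does; and for the same reason you should also restrict to $t\in[0,s]$ with $s<T$ before passing to the limit, as the paper does, since the inequalities are not assumed at $t=T$.
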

\begin{proof}

Let $\lambda(K,R)=\lambda>C(K,R)$, where $C(K,R)$ is some constant whose expression will be given later (the definition of $C(K,R)$ is given in \eqref{eq : expr C(K,R)}).
First fix $s\in(0,T)$ and $\ell\in (0,K)$. We argue by contradiction and we assume that
\begin{eqnarray*}
\sup\Big\{\exp\big(-\lambda t-\frac{(x-\ds\frac{R}{2})^2}{2}\big)\Big(v_{i}(t,x,l)-u_{i}(t,x,l)\Big)\Big\}>0
\end{eqnarray*}
 where the supremum is taken over all $(i,t,x,l)\in [\![1,I]\!]\times [0,s]\times[0,R]\times[\ell,K]$ (under the convention $\sup(\emptyset) = 0$).

For any $(i,t,x,l)\in [\![1,I]\!]\times [0,s]\times[0,R]\times[\ell,K]$ let us set:
$$\theta_{\lambda,i}(t,x,l):=\exp\big(-\lambda t-\frac{(x-\ds\frac{R}{2})^2}{2}\big)\Big(v_{i}(t,x,l)-u_{i}(t,x,l)\Big).$$
Using the continuity and the terminal boundary conditions satisfied by $u$ and $v$ in the assumptions of the theorem, the supremum above is then reached at a point $(i_0,t_0,x_0,l_0)\in [\![1,I]\!]\times (0,s]\times [0,R]\times [\ell,K)$ that satisfies:
\begin{eqnarray}\label{eq : compa u et v absurde}
v_{i_0}(t_0,x_0,l_0)-u_{i_0}(t_0,x_0,l_0)>0.
\end{eqnarray}

Assume first that $x_0=R$. From the regularity of $x\mapsto \theta_{\lambda, i_0}(t_0, x, l_0)$ up to  $x_0=R$ induced by our assumptions, we get $$\partial_x\theta_{\lambda,i_0}(t_0,x_0,l_0)\ge 0.$$ 

Hence:
$$\partial_xv_{i_0}(t_0,x_0,l_0)\ge \partial_xu_{i_0}(t_0,x_0,l_0)+\ds \frac{R}{2}\big(v_{i_0}(t_0,x_0,l_0)-u_{i_0}(t_0,x_0,l_0))\big).$$
Using the fact that $u$ is a super solution, whereas $v$ is a sub solution, we obtain from the boundary inequalities at $x_0=R$ that
$$0\ge \frac{R}{2}\big(v_{i_0}(t_0,x_0,l_0)-u_{i_0}(t_0,x_0,l_0))\big)>0,$$
and hence a contradiction.

Suppose now that $x_0\in (0,R)$, then the optimality conditions in the directional derivatives with respect to the variables $t$ and $x$ imply:
\begin{eqnarray*}
&\partial_t\theta_{\lambda,i_0}(t_0,x_0,l_0)\ge 0\;\;\;(=0 \text{ if } t_0\neq s),~~\partial_x\theta_{\lambda,i_0}(t_0,x_0,l_0)=0,~~\partial^2_x\theta_{\lambda,i_0}(t_0,x_0,l_0)\leq 0.
\end{eqnarray*}
So that since $(t_0,x_0,l_0)\in (0,s]\times [0,R]\times [\ell, K)$,
\begin{eqnarray*}
&-2(x_0-\frac{R}{2})\big(\partial_x v_{i_0}(t_0,x_0,l_0)-\partial_x u_{i_0}(t_0,x_0,l_0)\big)\\
&+[(x_0-\frac{R}{2})-1]^2\big(v_{i_0}(t_0,x_0,l_0)-u_{i_0}(t_0,x_0,l_0)\big)+\partial_x^2v_{i_0}(t_0,x_0,l_0)-\partial_x^2u_{i_0}(t_0,x_0,l_0)\leq 0.
\end{eqnarray*}
Because $\partial_x\theta_{\lambda,i_0}(t_0,x_0,l_0)=0$ we get
\begin{eqnarray*}
-[1+(x_0-\frac{R}{2})^2]\big(v_{i_0}(t_0,x_0,l_0)-u_{i_0}(t_0,x_0,l_0)\big)+\partial_x^2v_{i_0}(t_0,x_0,l_0)-\partial_x^2u_{i_0}(t_0,x_0,l_0)\leq 0.
\end{eqnarray*}

Using now the fact that $v$ is a sub solution while $u$ is a super solution of \eqref{eq : pde with l} on the ray $\mathcal{R}_{i_0}$, we obtain using the positivity assumption on the coefficient $a_{i_0}$ $(\mathcal{H})$ a)-(i):
\begin{eqnarray*}
&0 \ge (\partial_tv_{i_0}-\partial_tu_{i_0})(t_0,x_0,l_0) -a_{i_0}(t_0,x_0,l_0)\big((\partial_x^2v_{i_0}-\partial_x^2u_{i_0})(t_0,x_0,l_0)\big)\\&
+b_{i_0}(t_0,x_0,l_0)\big((\partial_xv_{i_0}-\partial_xu_{i_0})(t_0,x_0,l_0)\big)+c_{i_0}(t_0,x_0,l_0)\big((v_{i_0}-u_{i_0})(t_0,x_0,l_0)\big)
\\&\geq \lambda \big((v_{i_0}-u_{i_0})(t_0,x_0,l_0)\big)-a_{i_0}(t_0,x_0,l_0)[1+(x_0-\frac{R}{2})^2]\big((v_{i_0}-u_{i_0})(t_0,x_0,l_0)\big)\\
&+b_{i_0}(t_0,x_0,l_0)(x_0-\frac{R}{2})\big((v_{i_0}-u_{i_0})(t_0,x_0,l_0)\big)+c_{i_0}(t_0,x_0,l_0)\big((v_{i_0}-u_{i_0})(t_0,x_0,l_0)\big)\\
&\ge\big(\lambda-C(K,R)\big)\big(v_{i_0}(t_0,x_0,l_0)-u_{i_0}(t_0,x_0,l_0)\big),
\end{eqnarray*}
where:
\begin{eqnarray}\label{eq : expr C(K,R)}
&\nonumber C(K,R):=\sup \Big\{-a_{i}(t,x,l)[1+(x-\frac{R}{2})^2]
+b_{i}(t,x,l)(x-\frac{R}{2})+c_{i}(t,x,l),~~\\
&(t,(x,i),l)\in[0,T]\times \mathcal{N}_R\times [0,K]\Big\}.
\end{eqnarray}

Therefore, using \eqref{eq : compa u et v absurde} and the defining property for $\lambda$, we obtain a contradiction.

Now assume that $x_0=0$. 
Since for all $(i,j)\in[\![1,I]\!]$, $u_i(t_0,0,l_0)=u_j(t_0,0,l_0)=u(t_0,0,l_0)$ and $v_i(t_0,0,l_0)=v_j(t_0,0,l_0)=u(t_0,0,l_0)$, using the regularity with respect to the variable $l$ of both $u$ and $v$ at $\{0\}$ (coming from condition $iv)$ in the definition of $\mathfrak{C}^{1,2,0}_{\{0\}} \big(\Omega_T\big)$), we obtain:
\begin{eqnarray*}
\exp\big(\!-\lambda t_0-\ds\frac{R^2}{8}\big)\Big((\partial_lv-\partial_lu)(t_0,0,l_0)\Big)\leq 0~~\text{so that}~~\partial_lv(t_0,0,l_0)-\partial_lu(t_0,0,l_0)\leq 0.
\end{eqnarray*}

By definition of $(t_0,x_0,l_0) = (t_0,0,l_0)$, we have also that for all $i\in[\![1,I]\!]$ and $h\in[0,R]$:
\begin{eqnarray*}
&\exp(\lambda t_0-\ds\frac{R^2}{8}\big)\Big(v(t_0,0,l_0)-u(t_0,0,l_0)\Big) \ge \exp(\lambda t_0-\ds\frac{(h-\ds \frac{R}{2})^2}{2}\big)\Big(v_i(t_0,h,l_0)-u_i(t_0,h,l_0)\Big).
\end{eqnarray*}

Therefore, applying a first order Taylor expansion with respect to the variable $x$ in the neighborhood of the junction point $\{0\}$, for any $i\in [\![1,I]\!]$
we get that \begin{eqnarray*}
& v(t_0,0,l_0)-u(t_0,0,l_0)~~\ge~~ v(t_0,0,l_0)-u(t_0,0,l_0)+\\
&h\Big(\ds\frac{R}{2}\big(v(t_0,0,l_0)-u(t_0,0,l_0)\big)+
\partial_xv_i(t_0,0,l_0)-\partial_xu_i(t_0,0,l_0)\Big)~+~h\varepsilon_i(h),
\end{eqnarray*}
with $\lim_{h\to 0}\varepsilon_i(h)=0$. Thus,
\begin{eqnarray}\label{eq : ine grad}
\forall i\in[\![1,I]\!],~~\partial_xv_i(t_0,0,l_0) ~~\leq ~~\partial_xu_i(t_0,0,l_0)-\ds\frac{R}{2}\big(v(t_0,0,l_0)-u(t_0,0,l_0)\big).
\end{eqnarray}
Now, using the ellipticity assumption on the coefficients $(\alpha_i)_{1\leq i \leq I}$ ($\mathcal{H}$) a)-(ii)), observing that the coefficient $r$ is non negative and also the fact that $v$ is a sub solution while $u$ is a super solution of \eqref{eq : pde with l} at $\{0\}$, we obtain:
\begin{eqnarray*}
&0\ge \partial_lu(t_0,0,l_0)-\partial_lv(t_0,0,l_0)+\displaystyle \sum_{i=1}^I \alpha_i(t,l)\big(\partial_xu_i(t_0,0,l_0)-\partial_xv_i(t_0,0,l_0)\big)\\
&-r(t_0,l_0)\big(u(t_0,0,l_0)-v(t_0,0,l_0)\big) \ge I \underline{\alpha}\ds\frac{R}{2}\big(v(t_0,0,l_0)-u(t_0,0,l_0)\big)>0,
\end{eqnarray*}
which yields a contradiction.

All cases lead to contradictions, resulting in the fact that for all $0\leq s<T$, for all $(t,(x,i),l)\in[0,s]\times\mathcal{N}_R\times[\ell,K]$:
\begin{eqnarray*}
\exp\big(-\lambda t-\frac{(x-\ds\frac{R}{2})^2}{2}\big)\Big(v_{i}(t,x,l)-u_{i}(t,x,l)\Big)~~\leq~~0.
\end{eqnarray*}
Using the continuity of $u$ and $v$ w.r.t variables $(t,l)$, we deduce finally that for all $(t,(x,i),l)\in[0,T]\times\mathcal{N}_R\times[0,K]$:
\begin{eqnarray*}
v_{i}(t,x,l) \leq u_{i}(t,x,l).
\end{eqnarray*}
\end{proof}

%%%%%%%%%%%%%%%%%%%%%%%%%%%%%%%%%%%%%%%%%%%%%%%%%%%%%%%
%%%%%%%%%%%%%%%%%%%%%%%%%%%%%%%%%%%%%%%%%%%%%%%%%%%%%%%
%%%%%%%%%%%%%%%%%%%%%%%%%%%%%%%%%%%%%%%%%%%%%%%%%%%%%%%%
%%%%%%%%%%%%%%%%%%%%%%%%%%%%%%%%%%%
%%%%%%%%%%%%%%%%%%%%%%%%%%%%%%%%%%%%%%%%%%%%%%%%%%%%%%%
%%%%%%%%%%%%%%%%%%%%%%%%%%%%%%%%%%%%%%%%%%%%%%%%%%%%%%
%%%%%%%%%%%%%%%%%%%%%%%%%%%%%%%%%%%%%%%%%%%%%%%%%%%%%%
%%%%%%%%%%%%%%%%%%%%%%%%%%%%%%%%%%%%%%%%%%%%%%%%%%%%%%

\section{The main result obtained by Von Below in \cite{Von Below} revisited.}
\label{sec : Von Below revisited}
Up to our knowledge, the first result obtained for linear parabolic equations posed on networks - involving Kirchhoff's type boundary conditions at the vertices -
was obtained by Von Below in \cite{Von Below}. Essentially, it is proved in this paper that a linear parabolic problem on a network with a classical Kirchhoff's boundary conditions at the junctions vertices is well-posed. The proof consists in increasing the dimension of the problem and showing that the linear parabolic problem with Kirchhoff's condition is equivalent to a well-posed initial boundary value problem but for a higher dimensional standard parabolic linear system, where the boundary conditions are transformed in such a way that the classical results on linear parabolic equations systems with Neumann boundary conditions may be applied (namely, the conditions of Chapter VII in \cite{pde para}). 

The result of \cite{Von Below} on existence is stated under natural smoothness assumptions for the coefficients, but also under second order strong compatibility conditions for the initial data $g$ at the junction point $\{0\}$, (see equation 8.3 in \cite{Von Below}). 
To be more specific, recall that if $u$ is a weak solution in the Sobolev class $W^{1,2}$ (for infinitely differentiable test functions vanishing at the lateral surface of the domain $[0,T]\times[0,R]$) of some classical linear parabolic problem with coefficients $(a,b,c,f)$ that belong to $\mathcal{C}^{1+\frac{\alpha}{2},2+\alpha}\big((0,T)\times (0,R)\big)$, then $u\in \mathcal{C}^{1+\frac{\alpha}{2},2+\alpha}\big((0,T)\times (0,R)\big)$ (see Theorem III.12.2 given in \cite{pde para}). Moreover, if the initial data of this classical parabolic problem satisfies the classical compatibility conditions at the lateral surface of $[0,T]\times[0,R]$, then the solution $u$ will have a regularity in the class $\mathcal{C}^{1+\frac{\alpha}{2},2+\alpha}\big([0,T]\times [0,R]\big)$, namely in the whole domain up to the parabolic boundary. Hence, considering conditions at some vertex with a similar look as various classical boundary conditions coming from several directions, leads naturally to the strengthening of the compatibility conditions at the vertex: this is what is imposed in \cite{Von Below} as the key in order to guarantee in each directions of the edges, there exists a regular spatial derivative at the vertices. 

In this section -- as was obtained by I.Ohavi in \cite{Ohavi PDE} in a slightly different context (Quasi-linear parabolic equation but with homogeneous coefficients) -- we show that there is no need to impose a second order compatibility condition of the initial data, but only a first order compatibility with Kirchhoff's condition at the vertex; we show that this is enough to ensure that the solution is $\mathcal{C}^1$, with respect to the variable $x$ in the whole domain $[0,T]\times [0,R]$, for all the rays $(\mathcal{R}_i)_{i\in[\![1,I]\!]}$. 

Turning to uniqueness issues, one should keep in mind that the regularity at the junction point of the time derivative and the Laplacian of the solution is not required. The classical results on well-posedeness of parabolic systems that use the same linear operator involved in our problem ensure that the linear operator is invertible on the Banach space $\mathcal{C}^{1+\frac{\alpha}{2},2+\alpha}\big([0,T]\times [0,R]\big)\times\mathcal{C}^{1+\alpha/2}\big([0,T]\big) $, (see  see Chapter IV, section 7 in \cite{pde para}) as long as the coefficients belong at least to the class $\mathcal{C}^{1+\alpha/2}\big([0,T]\big)$ up to the boundary. Notably, we will see that only Lipschitz coefficients at the junction $\{0\}$ are necessary to ensure the validity of Kirchhoff's boundary condition. This phenomenon was also observed in \cite{Ohavi PDE}.
\medskip{}
\medskip{}

Our main objective is to ensure the well posedness of the system \eqref{eq : pde with l}, where a new variable $l$ comes into play. If one wishes, as is natural, to exploit and adapt similar ideas as in the classical approach by following the techniques of proof given in \cite{pde para} and performing the same kind of transformations as in \cite{Von Below} for example, the following issues would surely have to be considered:\\
i) obtain an explicit solution on the half line with constant coefficients using the heat kernel. This relates to the joint density of the reflected Brownian motion and its local time;\\
ii) obtain the solvability with general coefficients in the half line (for e.g. as in Chapter IV, Section 7 of \cite{pde para}) but taking good care of the fact that we do not have an uniform parabolic operator in the variables $(x,l)$;\\
iii) adapt the theory of linear parabolic systems to the linear operator involved by the system, like in Chapter VII of \cite{pde para}, which would lead to very long polynomial calculations.
\medskip

As already mentioned in the Introduction, in this paper we prefer to choose another path and dig into the recent ideas of \cite{Ohavi PDE}, where the second author obtained classical solvability in H\"{o}lder spaces for quasi-linear parabolic system posed on a star-shaped network, with a homogeneous Neumann or Kirchhoff's condition denoted by $F$, by constructing and studying a convergent elliptic scheme. 

Let us now give some insights on the methodology used in \cite{Ohavi PDE}.
First, elementary arguments show that the elliptic quasi-linear problem is well posed (see \cite{Lions Souganidis 1} or Appendix B in \cite{Ohavi PDE}). The data of the system satisfies the classical assumption of uniform ellipticity, with quadratic growth in the gradient variable given in Chapter VI of \cite{pde para}, whereas the boundary condition $F$ is assumed to be increasing with respect to the gradient at the junction point $\{0\}$. The main key is to obtain first a bound for $|\partial_tu|$ in the whole domain (see Lemma 4.1 in \cite{Ohavi PDE}). Note that in the quasi-linear context of \cite{Ohavi PDE}, the price to pay was to consider homogeneous coefficients. Let us also mention that all the bounds for the solution are completely independent of Kirchhoff's boundary condition $F$ (see Lemma 4.1 and 4.2 in \cite{Ohavi PDE}). 

Following these ideas, the construction of the solution of system \eqref{eq : pde with l} will be done {\it via} a convergence approximation scheme by constructing a parabolic discretization scheme with a discrete grid w.r.t. the variable $l$:
\begin{eqnarray*}
\begin{cases}
\partial_tu_i^p(t,x)-a_i(t,x,l_p)\partial_x^2u_i^p(t,x)+b_i(t,x,l_p)\partial_xu_i^p(t,x)+c_i(t,x,l_p)u_i^p(t,x)=f_i(t,x,l_p),\\
\ds n(u^{p-1}(t,0)-u^p(t,0))+\sum_{i=1}^I\alpha_i(t,l_p)\partial_xu_i^p(t,0)-r(t,l_p)u^p(t,0)=\phi(t,l_p).
\end{cases}.
\end{eqnarray*}
Getting accurate expressions of the bounds for the derivatives of the solution of each such $l_p$-step parabolic problem is of crucial importance in order to guarantee the convergence of the sequence towards a non-exploding solution as the mesh-size of the $l$-grid tends to zero. This section is entirely devoted to the matter of getting expressions of these bounds that are {\it good enough} (see Theorem \ref{th: ex sys para}). 

With this purpose in mind, we follow the same line of arguments as in \cite{Ohavi PDE}: we construct an elliptic system designed to converge to the parabolic problem. The linear character of our system permits to simplify some of the arguments since Bernstein's estimates are no longer needed to find a bound for the gradient term; also, up to a bit of extra burdensome technicalities, the strong assumption on the homogeneity of the coefficients that was needed in \cite{Ohavi PDE} is not required here. A central key is to obtain an uniform bound of the elliptic system approximation time derivative $n|u^{k-1}(t,0)-u^k(t,0)|$ at the junction point $\{0\}$. This is done in Proposition \ref{pr : borne deriv temps en 0}, where we provide a refined uniform bound independent of the coefficients appearing on the rays: this refined bound will be crucial to ensure the convergence of our $l$-step parabolic scheme in Section \ref{sec: preuve résultat principal}.

\medskip{}
\medskip{}
\medskip{}
\medskip{}
In the remainder of this section, we use the same notations introduced in Introduction \ref{sec:intro} and Appendix \ref{sec : functionnal spaces}, removing the dependence on the "local-time" variable $l$. We consider the following data:
$$\mathcal{D}^{'}~~\begin{cases}\Big(a_i \in W^{1,\infty}\big([0,T]\times [0,R],\R_+\big)\Big)_{i\in[\![1,I]\!]}\\
\Big(b_i \in W^{1,\infty}\big([0,T]\times [0,R],\R\big)\Big)_{i\in[\![1,I]\!]}\\
\Big(c_i \in W^{1,\infty}\big([0,T]\times [0,R],\R\big)\Big)_{i\in[\![1,I]\!]}\\
\Big(f_i \in W^{1,\infty}\big([0,T]\times [0,R],\R\big)\Big)_{i\in[\![1,I]\!]}\\
\Big(\alpha_i \in W^{1,\infty}\big([0,T],\R_+\big)\Big)_{i\in[\![1,I]\!]}\\ 
\lambda \in W^{1,\infty}\big([0,T],\R_+\big),\\
\gamma \in W^{1,\infty}\big([0,T],\R\big),\\
g\in\mathcal{C}^{1}\big(\mathcal{N}_R \big)\cap \mathcal{C}^{2}_b\big(\overset{\circ}{\mathcal{N}_R^*}\big)
\end{cases}.$$
We assume furthermore that the data $\mathcal{D}^{'}$ satisfy the following assumption:
$$\textbf{Assumption } (\mathcal{H}^{'})$$
a)-Ellipticity condition for the terms $\big(a_i,\alpha_i,\big)_{i\in [\![1,I]\!]}$ and $\lambda$:
\begin{align*}
&(i)~~\exists~\underline{a}>0,~~\forall i \in [\![1,I]\!], ~~\forall (t,x)\in [0,T]\times [0,R],~~a_i(t,x) \ge \underline{a},\\
&(ii)~~\exists~\underline{\alpha}>0,~~\forall i \in [\![1,I]\!], ~~\forall t\in [0,T],~~\alpha_i(t) \ge \underline{\alpha},\\
&(iii)~~\exists~\underline{\lambda}>0,~~\forall t\in [0,T],~~\lambda(t) \ge \underline{\lambda}.
\end{align*}
b)-Compatibility conditions for the initial condition $g$:
\begin{align*}
&(i)~~-\lambda(0)g(0)+\sum_{i=1}^I\alpha_i(0)\partial_xg_i(0)=\gamma(0),\\
&(ii)~~\partial_xg(R)=0.
\end{align*}
We consider the following parabolic system posed on the star-shaped network $\mathcal{N}_R$:
\begin{eqnarray}\label{eq : pde para }
\begin{cases}\partial_tu_i(t,x)-a_i(t,x)\partial_x^2u_i(t,x)
+b_i(t,x)\partial_xu_i(t,x)+
c_i(t,x)u_i(t,x)=f_i(t,x),\\
\hspace{0,4 cm}~~(t,x)\in (0,T)\times (0,R),\\
-\lambda(t)u(t,0)+\displaystyle \sum_{i=1}^I \alpha_i(t)\partial_xu_i(t,0)=\gamma(t),~~t\in(0,T),\\
\partial_xu_i(t,R)=0,~~ t\in (0,T),\\
\forall (i,j)\in[\![1,I]\!]^2,~~u_i(t,0)=u_j(t,0)=u(t,0),~~t\in[0,T],\\
\forall i\in[\![1,I]\!],~~ u_i(0,x)=g_i(x),~~x\in[0,R].
\end{cases}
\end{eqnarray}

Define $(t_k=\frac{kT}{n})_{0 \leq k \leq n}$ by a grid on $[0,T]$ . We consider $(u_i^k)_{i\in [\![1,I]\!], k\in [\![0,n]\!]}$ the unique classical solution of the following system of elliptic equations $\pare{{\mathcal E}_k}_{k\in [\![1,n]\!]}$:~
\begin{eqnarray}\label{eq: schema ell}
{\mathcal E}_k~:~
\begin{cases}
n(u_i^k(x)-u_{i}^{k-1}(x))-a_i(t_k,x)\partial_x^2u_i^k(x) \\
\hspace{0,3 cm}+\,b_{i}(t_k,x)\partial_xu_i^k(x) + c_i(t_k,x)u_i^k(x)=f_i(t_k,x)~~ \text{ if } x \in (0,R),\\ 
\ds -\lambda(t_k)u^k(0)+\sum_{i=1}^I {\alpha}_i(t_k)\partial_x u_i^k(0)=\gamma(t_k),\\
\partial_xu_i^k(R)=0,~~\forall i\in[\![1,I]\!],\\
u_i^k(0)=u_{j}^k(0)=u^k(0),~~\forall (i,j)\in[\![1,I]\!]^2.
\end{cases}
\end{eqnarray}
where $u^0_i(x)=g_i(x)$.
\medskip
 By applying inductively classical results on elliptic partial differential equations (see for e.g. Theorem 2.1 of \cite{Lions Souganidis 1}) gives us assurance that at each step $k\in [\![1,n]\!]$ the above elliptic system \eqref{eq: schema ell} admits a unique solution $(u_i^k)_{i\in [\![1,I]\!]}$ in the class $\mathcal{C}^{2}(\mathcal{N}_R)$. A map $h$ in the class $\mathcal{C}^{2}(\mathcal{N}_R)$ is a super (resp. sub) solution corresponding to ${\mathcal E}_k$ if:
\begin{eqnarray*}
\begin{cases}
n(h_i(x)-u_{i}^{k-1}(x))-a_i(t_k,x)\partial_x^2h_i(x)+
b_{i}(t_k,x)\partial_xh_i(x)+c_{i}(t_k,x)h_i(x)-f_i(t_k,x)\ge 0,\\
\text{(resp.}~\leq 0),~~\text{ if } x \in (0,R),\\ 
\ds -\lambda(t_k)h(0)+\sum_{i=1}^I \alpha_i(t_k)\partial_x h_i(0)-\gamma(t_k)\leq 0,~~\text{(resp.}~\ge 0)\\
\partial_xh_i(R)\ge 0,~~\text{(resp.}~\leq 0),~~\forall i\in[\![1,I]\!],\\
h(0)=h_{j}(0)=h_{i}(0),~~\forall (i,j)\in[\![1,I]\!]^2.
\end{cases}
\end{eqnarray*}
The elliptic comparison theorem holds true in the class $\mathcal{C}^2(\mathcal{N}_R)$ (see Theorem 3.3 in \cite{Ohavi PDE}), that is if $f$ is a super solution and $v$ a sub solution, then $f\ge v$ in the whole domain $\mathcal{N}_R$. 

%%%%%%%%%%%%%%%%%%%%%%%%%%%%%%%%%%%%%%%%%%%%%%%%%%%%%%%%%%%%%%%%%%%%%%%%%%%%%%%%%%%%%%%%%%%%%%%%%%%%%%%%%%%%%%%%%%%%%%%%%%%%%%%%%%%%%%%%%%%%%%%%%%%%%%%%%%%%%%%%%%%%%%%%%%%%%%%%%%%%%%%%%%%%%%%%%%%%%%

\medskip

For a fixed $n\in \N^\ast$, we will denote in the sequel $\pare{{\mathcal L}_{i}^k}_{i\in [\![1,I]\!], k\in [\![1,n]\!]}$ the family of operators acting each on $\phi \in C^2([0,R])$ and defined by
\begin{align*}
{\mathcal L}_{i}^k\phi(x) &= n(\phi(x)-u_{i}^{k-1}(x))-a_i(t_k,x)\partial_x^2 \phi(x) +\,b_{i}(t_k,x)\partial_x \phi(x) + c_i(t_k,x)\phi(x) - f_i(t_k,x).
\end{align*}
Using this notation, $h$ is a super (resp. sub) solution corresponding to  ${\mathcal E}_k$ implies ${\mathcal L}_i^k h \geq 0$ (resp. $\leq 0$) for all $x\in (0,R)$.
\medskip
\subsection{Uniform bound for the solution}
\begin{Proposition}
\label{prop:uniform-bound}
For any large enough $n\in \N^\ast$ ($n\geq \pare{\lfloor |c|_{\infty}\rfloor + 1} \vee |c|^2_\infty$),
\begin{equation}
\label{eq:def-C0}
\max_{i \in [\![1,I]\!]}  \max_{k\in [\![1,n]\!]} |u_i^k|_\infty \leq C_0,
\end{equation}
with
\begin{equation}
\label{eq:def-C0-schema-elliptique}
C_0:=\pare{\frac{|\gamma|_\infty}{\underline{\lambda}}\vee |g|_\infty + |f|_ \infty}{\rm e}^{|c|_\infty + 1}.
\end{equation}
\end{Proposition}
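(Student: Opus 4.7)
The plan is to proceed by induction on $k$, exhibiting at each step a constant super-solution and a constant sub-solution of the elliptic system $\mathcal{E}_k$, and then invoking the elliptic comparison theorem recalled just before the statement (Theorem 3.3 of \cite{Ohavi PDE}). The base case $k=0$ is immediate: $u^0=g$ gives $|u^0|_\infty\leq |g|_\infty\leq C_0$, since $e^{|c|_\infty+1}\geq 1$ and $|g|_\infty\leq |g|_\infty\vee(|\gamma|_\infty/\underline{\lambda}) + |f|_\infty$.

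For the inductive step, assume $|u^{k-1}|_\infty\leq M_{k-1}$ for some $M_{k-1}\geq 0$ and look for the smallest $M_k\geq 0$ such that the constant function $h\equiv M_k$ is a super-solution of $\mathcal{E}_k$ (so that by symmetry $-M_k$ is a sub-solution). The Neumann condition at $x=R$ is trivially satisfied. Using $u_i^{k-1}(x)\leq M_{k-1}$, the pointwise bound $c_i(t_k,x)\geq -|c|_\infty$ (applicable because $M_k\geq 0$), and $f_i(t_k,x)\leq |f|_\infty$, the interior inequality $\mathcal{L}_i^k(M_k)\geq 0$ reduces to
\[
(n-|c|_\infty)\,M_k\ \geq\ n\,M_{k-1}+|f|_\infty,
\]
while the Kirchhoff inequality at $\{0\}$, $-\lambda(t_k)M_k-\gamma(t_k)\leq 0$, is implied by $M_k\geq |\gamma|_\infty/\underline{\lambda}$. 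Symmetric arguments produce the same two constraints on $M_k$ for $-M_k$ to be a sub-solution. The comparison theorem then yields $|u^k|_\infty \leq M_k$.

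It remains to bound $M_n$. Setting $\rho:=n/(n-|c|_\infty)>1$ and $\beta:=|f|_\infty/(n-|c|_\infty)$, the constraints above are satisfied by the recursion $M_k:=\max\bigl(\rho M_{k-1}+\beta,\,|\gamma|_\infty/\underline{\lambda}\bigr)$, with $M_0:=|g|_\infty\vee (|\gamma|_\infty/\underline{\lambda})$. The change of variable $\tilde M_k:=M_k+|f|_\infty/|c|_\infty$ linearizes the recursion into $\tilde M_k\leq \rho\,\tilde M_{k-1}$, whence $M_k\leq \rho^k\tilde M_0$; the assumption $n\geq (\lfloor|c|_\infty\rfloor+1)\vee |c|_\infty^2$ guarantees both $n-|c|_\infty>0$ and, via the elementary bound $\rho^n\leq \exp\!\bigl(n|c|_\infty/(n-|c|_\infty)\bigr)$, the inequality $\rho^n\leq e^{|c|_\infty+1}$. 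The main technical subtlety is to convert this into the clean additive form of $C_0$, since the natural equilibrium of the iteration involves $|f|_\infty/|c|_\infty$ rather than $|f|_\infty$; this is handled either by refining the test super-solution into an affine ansatz $A\rho^k+B$ with $B=|f|_\infty/|c|_\infty$ to exactly match the stated constant, or by separately treating the degenerate case $|c|_\infty=0$ (where $M_k\leq M_0+k\beta\leq M_0+|f|_\infty$ is immediate) and invoking continuity with respect to $|c|_\infty$.
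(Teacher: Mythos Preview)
Your proposal is correct and follows essentially the same approach as the paper: both proceed by induction on $k$, applying the elliptic comparison theorem with constant test functions $\pm M_{k,n}$, deriving the same recursion $M_{k,n}=\rho M_{k-1,n}+|f|_\infty/(n-|c|_\infty)$ together with the Kirchhoff constraint $M_{k,n}\ge |\gamma|_\infty/\underline{\lambda}$, solving explicitly, and bounding $\rho^n\le e^{|c|_\infty+1}$. You are in fact slightly more careful than the paper in flagging the discrepancy between the natural term $|f|_\infty/|c|_\infty$ arising from the recursion and the additive $|f|_\infty$ appearing in the stated constant $C_0$; the paper simply asserts the final inequality without addressing this point.
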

\begin{proof}

\medskip

{\it Step 1. Analysis}.

We are going to show by induction on the variable $k\in [\![0,n]\!]$ that
$$\forall k\in [\![0,n]\!],\;\;\forall i\in [\![1,I]\!],~~|u_i^k|_\infty \leq M_{k,n}$$
for a well chosen positive sequence $\pare{M_{k,n}}_{k\in [\![0,n\!]}$ independent on the branch index $i\in [\![1,I]\!]$.

For the construction of $\pare{M_{k,n}}_{k\in [\![0,n\!]}$, the main tool is to use the elliptic comparison theorem for junction partial differential equations (see for e.g. Theorem 3.3 in \cite{Ohavi PDE} or Theorem 2.1 in \cite{Lions Souganidis 1}) iteratively for each problem ${\mathcal E}_k$ with the family $(\hat{\phi}_i^k)_{i\in [\![1,I]\!],\;k\in [\![1,n]\!]}$ of constant functions defined by
\begin{equation}
\label{def:hat-phi-constant}
\hat{\phi}_i^k~:~x\mapsto {M_{k,n}} \;\;\;\; \text{and}\;\;\;\;\;\check{\phi}_i^k~:~x\mapsto -{M_{k,n}}.
\end{equation}

Initialisation of our induction imposes that for all $i\in [\![1,I]\!]$,
$$
M_{0,n} \geq |u_i^0|_\infty = |g_i|_{\infty},$$
which is guaranteed by choosing $M_{0,n}$ such that 
\begin{equation}
\label{eq:cond-M-init}
   M_{0,n} \geq \max_{i\in [\![1,I]\!]}|g_i|_{\infty} 
\end{equation}

Let us now find sufficient conditions that ensure the heredity of the property. Fix $k\in [\![1,n]\!]$. As announced before, the idea is to make use of the comparison theorem for the problem ${\mathcal E}_k$ in combination with the induction hypothesis: $\forall i\in [\![1,I]\!],~|u_i^{k-1}|_\infty \leq M_{k-1,n}$.

Dropping the references to $i$, the induction hypothesis implies 
\begin{align*}
{\mathcal L}^k\hat{\phi}^k(x) &\geq n\,\pare{M_{k,n} - u^{k-1}(x)} + c(t_k,x){M_{k,n}} - f(t_k, x) 
\geq (n - |c|_\infty)M_{k,n} - nM_{k-1,n} - |f|_\infty
\end{align*}
so that ${\mathcal L}^k\hat{\phi}^k(x)\geq 0$ is guaranteed if the following induction relation holds
\begin{align*}
%%\label{eq:cond-M-uniform}
M_{k,n} = \frac{n}{n-|c|_\infty}\pare{M_{k-1,n} + \frac{|f|_\infty}{n}}
\end{align*}
that is solved for
\begin{align}
\label{eq:cond-M-uniform-2}
M_{k,n} = \pare{\frac{n}{n - |c|_\infty}}^k M_{0,n} + \frac{|f|_ \infty}{|c|_\infty}\pare{\pare{\frac{n}{n - |c|_\infty}}^{k} - 1}{\bf 1}_{|c|_\infty\neq 0}
\end{align}
(remember that $n\geq \lfloor |c|_{\infty}\rfloor + 1$).

Let us now turn to the boundary conditions needed to apply the comparison theorem. 

The continuity condition at the junction point $\{0\}$ of the family $(\hat{\phi}_i^k)_{i\in [\![1,I]\!],\;k\in [\![1,n]\!]}$ is clearly satisfied since 
\[\phi^k_i(0) = \phi^k_j(0) = M_{k,n}\;\;\;\forall (i,j)\in [\![1,I]\!]^2.\]
Moreover, it is also clear that
$\partial_{x}\hat{\phi}^k(R) = 0\geq 0$. For Kirchhoff's condition, we need
\begin{align*}
0\geq &-\lambda(t_k)\phi^k(0)+\sum_{i=1}^I {\alpha}_i(t_k)\partial_x \phi^k(0) - \gamma(t_k) = -\lambda(t_k)M_{k,n} -\gamma(t_k)
\end{align*}
that is guaranteed whenever
\begin{equation}
\label{eq:Kirchhoff-phi-uniform-bound}
M_{k,n} \geq \frac{|\gamma|_\infty}{\underline{\lambda}}.
\end{equation}

In conclusion of this analysis, we have shown by induction that $(\hat{\phi}_i^k)_{i\in [\![1,I]\!],\;k\in [\![0,n]\!]}$ defined in \eqref{def:hat-phi-constant} is a family of super solutions of the elliptic problems $({\mathcal E}_k)_{k\in [\![0,n]\!]}$ whenever the sequence $\pare{M_{k,n}}_{k\in [\![0,n]\!]}$ satisfies \eqref{eq:cond-M-init} together with \eqref{eq:cond-M-uniform-2}, and \eqref{eq:Kirchhoff-phi-uniform-bound} for all $k\in [\![0,n]\!]$.

{\it Step 2. Synthesis}

In regard of our previous analysis, we define our purposely designed sequence $\pare{M_{k,n}}_{k\in [\![0,n]\!]}$ by setting
\begin{align*}
M_{0,n} &= \frac{|\gamma|_\infty}{\underline{\lambda}}\vee |g|_\infty,\\
M_{k,n} &= \pare{\frac{n}{n - |c|_\infty}}^k M_{0,n} + \frac{|f|_ \infty}{|c|_\infty}\pare{\pare{\frac{n}{n - |c|_\infty}}^{k} - 1}{\bf 1}_{|c|_\infty\neq 0}.
\end{align*}
Defined likewise, the sequence $\pare{M_{k,n}}_{k\in [\![0,n]\!]}$ has been purposely constructed in order to satisfy \eqref{eq:cond-M-init}, \eqref{eq:cond-M-uniform-2} and \eqref{eq:Kirchhoff-phi-uniform-bound} for all $k\in [\![0,n]\!]$. Consequently, by applying the comparison theorem to the elliptic problems $({\mathcal E}_k)_{k\in [\![0,n]\!]}$ with
the family of super solutions $(\hat{\phi}_{i}^k)_{i\in [\![1,I]\!]}$ defined in \eqref{def:hat-phi-constant} (and similarly to the family of sub solutions $(\check{\phi}_{i}^k)_{i\in [\![1,I]\!]}$), we ensure that for all $i\in [\![1,I]\!]$ and $k\in [\![1,n]\!]$~:~
\[
-M_{k,n} \leq u_i^k(x) \leq M_{k,n},\;\;\;\; x\in [0,R].
\]
Finally, using the explicit expression of $M_{k,n}$ yields that
for all $i\in [\![1,I]\!]$, $k\in [\![1,n]\!]$~:~
\[
|u_i^k|_\infty \leq \pare{\frac{|\gamma|_\infty}{\underline{\lambda}}\vee |g|_\infty + |f|_ \infty}{\rm e}^{|c|_\infty + 1}:=C_0,
\]
for large enough $n\in \N^\ast$ ($n\geq \pare{\lfloor |c|_{\infty}\rfloor + 1} \vee |c|^2_\infty$),
which ends the proof.
\end{proof}

\subsection{Uniform bound for the approximated time derivative}

Recall that $\underline{\alpha}$ stands for the ellipticity condition satisfied by the Kirchhoff's coefficients $(\alpha_i)_{i\in [\![1,I]\!] }$ (Assumption $ 
 (\mathcal{H}^{'})-a)-ii)$. We have the following result.

\begin{Proposition}
\label{Prop:estimation-time-derivative}
For any large enough $n\in \N^\ast$ ($n\geq \pare{\lfloor |c|_{\infty}\rfloor + 1} \vee |c|^2_\infty \vee \frac{|\alpha|_ {W^{1,\infty}}}{\underline{\alpha}}$),
\begin{equation}
\label{eq:def-C1}
\max_{i \in [\![1,I]\!]}  \max_{k\in [\![1,n]\!]} n|u_i^k - u_{i}^{k-1}|_\infty \leq C_1
\end{equation}
with
\begin{equation}
\label{eq:C1}
C_1:=\pare{1 + \frac{C_0|\lambda|_{W^{1,\infty}} + |\gamma|_{W^{1,\infty}}}{\underline{\lambda}}\vee C(g)}\exp\pare{K}
\end{equation}
where we have set
\begin{equation}
\label{eq:C(g)}
C(g):=|a|_\infty|\partial_x^2 g|_\infty + |b|_\infty|\partial_x g|_\infty + |c|_\infty|g|_\infty + |f|_\infty,
\end{equation}
and
\begin{equation}
\label{eq:K}
K:=\frac{1}{\underline{a}^3 \vee 1}\pare{1 + \frac{|\alpha|}{\underline{\alpha}} + C_0 + C}\pare{|b|_{W^{1,\infty}} + |c|_{W^{1,\infty}} + |f|_{W^{1,\infty}}}
\end{equation}
where $C$ is a universal constant (namely one can choose $C = 1188$).
\end{Proposition}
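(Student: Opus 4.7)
The plan is to bound $w_i^k := n(u_i^k - u_i^{k-1})$ uniformly in $(i,k)$ by applying the elliptic comparison theorem (Theorem~3.3 of~\cite{Ohavi PDE}) to the successive problems satisfied by $w_i^k$, and then closing by a discrete Gronwall argument in $k$. I split the work into a base case $k=1$, an inductive step for $k\geq 2$, and a final summation.

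For the base case, substituting $u_i^1 = g_i + w_i^1/n$ into $\mathcal{E}_1$ yields
\[
(n + c_i(t_1,\cdot))\,w_i^1 - a_i(t_1,\cdot)\,\partial_x^2 w_i^1 + b_i(t_1,\cdot)\,\partial_x w_i^1 \;=\; n\,G_i^1,
\]
with $G_i^1 := a_i(t_1,\cdot)\partial_x^2 g_i - b_i(t_1,\cdot)\partial_x g_i - c_i(t_1,\cdot)g_i + f_i(t_1,\cdot)$ bounded by $C(g)$. The Neumann compatibility $\partial_x g(R)=0$ forces $\partial_x w_i^1(R)=0$, while subtracting the compatibility $-\lambda(0)g(0)+\sum_i \alpha_i(0)\partial_x g_i(0)=\gamma(0)$ from the Kirchhoff condition at $t_1$ and multiplying by $n$ produces a Kirchhoff relation for $w^1$ whose right-hand side is bounded independently of $n$ by $|\lambda|_{W^{1,\infty}}|g|_\infty + |\alpha|_{W^{1,\infty}}|\partial_x g|_\infty + |\gamma|_{W^{1,\infty}}$ (the $|\partial_x g|_\infty$ contribution can be absorbed into $C(g)$). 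Testing constant super/sub solutions $\pm M_1$ against the comparison theorem provides $|w_i^1|_\infty\leq M_1$ with $M_1$ of the form $(1+O(1/n))\max\bigl((C_0|\lambda|_{W^{1,\infty}}+|\gamma|_{W^{1,\infty}})/\underline{\lambda},\,C(g)\bigr)$, which matches the pre-exponential factor of $C_1$.

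For $k\geq 2$, I subtract $\mathcal{E}_{k-1}$ from $\mathcal{E}_k$ and use $\mathcal{E}_{k-1}$ a second time to eliminate $\partial_x^2 u_i^{k-1}$ from the right-hand side via $a_i(t_{k-1},\cdot)\partial_x^2 u_i^{k-1} = w_i^{k-1} + b_i(t_{k-1},\cdot)\partial_x u_i^{k-1} + c_i(t_{k-1},\cdot)u_i^{k-1} - f_i(t_{k-1},\cdot)$. After multiplication by $n$, the map $w_i^k$ satisfies
\[
(n+c_i(t_k,\cdot))\,w_i^k - a_i(t_k,\cdot)\partial_x^2 w_i^k + b_i(t_k,\cdot)\partial_x w_i^k \;=\; n\,w_i^{k-1} + R_i^k,
\]
where $R_i^k$ is controlled by a linear combination of $|w^{k-1}|_\infty$, $|\partial_x u^{k-1}|_\infty$, $C_0$ and the Lipschitz seminorms of the data, with prefactors of the form $|a|_{W^{1,\infty}}/\underline{a}$, $|b|_{W^{1,\infty}}$, $|c|_{W^{1,\infty}}$, $|f|_{W^{1,\infty}}$. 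Subtracting the two Kirchhoff conditions and multiplying by $n$ yields a boundary relation whose right-hand side is controlled by $C_0|\lambda|_{W^{1,\infty}} + I\,|\alpha|_{W^{1,\infty}}\,|\partial_x u^{k-1}|_\infty + |\gamma|_{W^{1,\infty}}$. The core obstacle, and the step I expect to be the hardest, is to remove the dependence on $|\partial_x u^{k-1}|_\infty$ on both right-hand sides. For this I would use the interpolation inequality $|\partial_x u^{k-1}|_\infty^2 \leq c_{\rm int}\,|u^{k-1}|_\infty\,|\partial_x^2 u^{k-1}|_\infty$ together with the bound $|\partial_x^2 u^{k-1}|_\infty \leq \underline{a}^{-1}\bigl(|w^{k-1}|_\infty + |b|_\infty|\partial_x u^{k-1}|_\infty + |c|_\infty C_0 + |f|_\infty\bigr)$ extracted from $\mathcal{E}_{k-1}$; a Young-type absorption then gives $|\partial_x u^{k-1}|_\infty \leq \tilde C (1+|w^{k-1}|_\infty^{1/2})$, and it is precisely in this absorption that the $\underline{a}^{-3}$ factor appearing in $K$ arises.

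Armed with these estimates, I apply the elliptic comparison theorem with constant super/sub solutions $\pm M_k$ to deduce a recurrence $M_k \leq (1+A/n)M_{k-1} + B/n$, where $A$ and $B$ depend on the data precisely as dictated by the expression for $K$. The assumption $n \geq |\alpha|_{W^{1,\infty}}/\underline{\alpha}$ ensures the positivity needed to carry out the Kirchhoff test uniformly in $k$. A discrete Gronwall estimate then gives $M_k \leq e^{A} M_1 + (e^{A}-1)B/A \leq C_1$ for every $k\in [\![1,n]\!]$, which is the announced bound. All the manipulations apart from the interpolation step amount to bookkeeping of Lipschitz time-differences scaled by $n(t_k-t_{k-1})$; the technical heart of the proof lies in tuning the constants of the interpolation/absorption step to recover the precise $\underline{a}^{-3}$-dependence in $K$.
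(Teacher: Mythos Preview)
Your strategy (subtract $\mathcal{E}_{k-1}$ from $\mathcal{E}_k$, view $w^k=n(u^k-u^{k-1})$ as solving an elliptic network problem, and run comparison with constants $\pm M_k$) is sound and gives a uniform bound, but it is \emph{not} the route the paper takes, and one step in your write-up is not quite right.

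The paper does not subtract the two equations. Instead it applies the comparison theorem to $\mathcal{E}_k$ itself with barriers $\hat\phi^k_{i,\theta}=u_i^{k-1}\!\circ\psi^k_{i,\theta}\pm M_k/n$, where $\psi^k_{i,\theta}$ is a $\mathcal{C}^2$ approximation of the identity built so that $(\partial_x\psi^k_{i,\theta})(0)=\alpha_i(t_{k-1})/\alpha_i(t_k)$. The whole point of this device is the Kirchhoff test: one gets $\sum_i\alpha_i(t_k)\partial_x\hat\phi^k_{i,\theta}(0)=\sum_i\alpha_i(t_{k-1})\partial_xu_i^{k-1}(0)$, which is \emph{exactly} the Kirchhoff combination already satisfied by $u^{k-1}$. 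Thus no $|\alpha|_{W^{1,\infty}}|\partial_xu^{k-1}|$ term ever enters the junction constraint; what remains is the fixed quantity $(C_0|\lambda|_{W^{1,\infty}}+|\gamma|_{W^{1,\infty}})/\underline\lambda$, precisely the pre-exponential factor in $C_1$. The interior inequality $\mathcal{L}^k\hat\phi^k_\theta\geq0$ is then obtained by substituting $a_i(t_{k-1},\cdot)\partial_x^2u^{k-1}$ via $\mathcal{E}_{k-1}$, and $|\partial_xu^{k-1}|_\infty$ is bounded by \emph{integrating the equation} from $x$ to $R$ (using $\partial_xu^{k-1}(R)=0$), not by Landau--Kolmogorov interpolation; this yields a bound linear in $M_{k-1}$ and is where the $\underline a^{-3}$ in $K$ actually comes from.

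In your approach the Kirchhoff relation for $w^k$ reads $-\lambda(t_k)w^k(0)+\sum_i\alpha_i(t_k)\partial_xw^k_i(0)=\Gamma_k$ with $|\Gamma_k|\le C_0|\lambda|_{W^{1,\infty}}+|\gamma|_{W^{1,\infty}}+I|\alpha|_{W^{1,\infty}}|\partial_xu^{k-1}(0)|$. Testing the constant $M_k$ forces $M_k\ge|\Gamma_k|/\underline\lambda$, and this constraint is \emph{not} of size $O(1/n)$; so your stated recurrence $M_k\le(1+A/n)M_{k-1}+B/n$ is incomplete. Using your interpolation bound $|\partial_xu^{k-1}|_\infty\le\tilde C(1+M_{k-1}^{1/2})$ the extra junction condition becomes $M_k\ge D(1+M_{k-1}^{1/2})$, which is sublinear and therefore harmless once $M_1$ is taken above a threshold of order $(I|\alpha|_{W^{1,\infty}}\tilde C/\underline\lambda)^2$. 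Hence your argument closes, but with a larger initialisation than $(C_0|\lambda|_{W^{1,\infty}}+|\gamma|_{W^{1,\infty}})/\underline\lambda\vee C(g)$ and with different dependence on $\underline a$; you should not expect to land on the exact constant $C_1$ of the statement, nor does the hypothesis $n\ge|\alpha|_{W^{1,\infty}}/\underline\alpha$ play the role you assign it (in the paper it controls the distortion $[\alpha]^k-1$ of the identity approximation, which is absent from your scheme).
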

\begin{proof}

 {\it Step 1. Adaptive approximation of the identity}
 
For technical reasons that will appear clearly later, we need to introduce approximations of the identity function. These approximations will be used to ensure Kirchhoff's condition for the super and sub solutions constructed in the proof.

 In order to simplify the notations, let us fix for a moment $k\in [\![1,n]\!]$ and let us drop any reference to $i\in [\![1,I]\!]$. Denote $\ds [\alpha]^k := \frac{\alpha_i(t_{k-1})}{\alpha_i(t_k)}$.

Observe that
\begin{equation}
\label{eq:inegalite-crochet-alpha}
|[\alpha]^k - 1|\leq \frac{|\alpha|_{W^{1,\infty}}}{\underline{\alpha}n}
\end{equation}
where we recall that $\underline{\alpha}$ states for the ellipticity condition satisfied by the Kirchhoff's coefficients $(\alpha_i)_{i\in [\![1,I]\!] }$ (Assumption $ 
 (\mathcal{H}^{'})-a)-ii)$.
Set $\theta >0$ a small parameter. We introduce the following interpolation polynomial 
\begin{equation}
\label{def:poly}
P_\theta^k(x) = \pare{1 - [\alpha]^k}\left \{\frac{3}{\theta^4}x^5 - \frac{8}{\theta^3}x^4 + \frac{6}{\theta^2}x^3\right \} + [\alpha]^kx
\end{equation}
that satisfies the following important facts
\begin{eqnarray}
\begin{cases}
P_\theta^k(0) = 0,\;\;\; (\partial_x P_\theta^k)(0) = [\alpha]^k,\;\;\; (\partial^2_x P_\theta^k)(0) = 0\\
P_\theta^k(\theta) = \theta,\;\;\; (\partial_x P_\theta^k)(\theta) = 1,\;\;\; (\partial^2_x P_\theta^k)(\theta) = 0.
\end{cases}
\end{eqnarray}
The polynomial $P_\theta^k$ is constructed s.t. the $\theta$-approximation of the identity $\psi_\theta^k$ defined by
\begin{equation}
\label{def:psi}
\psi_\theta^k(x) = P_\theta^k(x){\bf 1}_{x\leq \theta}  + x{\bf 1}_{x>\theta}\;\;\;\;\text{ for } x \in [0,R]
\end{equation}
is a twice-differentiable function. 

An elementary study of the polynomial $Q(x) = 3x^5 - 8x^4 + 6x^3$ shows that it takes only positive values and satisfies that $x - Q(x)\geq 0$ for any $x\in [0,1]$. In particular, rewritting $P_\theta^k$, we see that for all $x\in [0,\theta]$:~
\begin{align*}
P_\theta^k(x) &= \theta\pare{(1-[\alpha]^k)Q(x/\theta) + [\alpha]^k(x/\theta)}=\theta\croc{\underbrace{Q(x/\theta)}_{\geq 0} + \underbrace{[\alpha]^k}_{\geq 0}\underbrace{\pare{(x/\theta) - Q(x/\theta)}}_{\geq 0}}\geq 0.
\end{align*}
Hence, from the bound \eqref{eq:inegalite-crochet-alpha}, we see that it is possible to choose $\theta_0>0$ small enough so that 
$\psi_\theta^k\pare{[0,R]}\subset [0,R]$ for all $\theta\in (0,\theta_0]$.

Observe that
\begin{equation}
\label{eq:poly-en-0}
(\partial_x \psi_\theta^k)(0) = [\alpha]^k,\hspace{0,3 cm}\lim_{x\searrow 0+}|\partial_x^2 \psi_\theta^k(x)| = 0.
\end{equation}
Observe also that there is a universal constant $C>0$ (a rough computation gives $C\geq 66\times 18 = 1188$ as announced in the statement of the proposition) s.t. 
\begin{eqnarray}
\begin{cases} 
\label{eq:estimation-polynomes}
\ds |\psi_\theta^k - {\rm id}|_{\infty}\leq  C\frac{|\alpha|_{W^{1,\infty}}}{\underline{\alpha}\,n},\\
\ds |\partial_x \psi_\theta^k - 1|_{\infty}\leq C\frac{|\alpha|_{W^{1,\infty}}}{\underline{\alpha}\,n},\\
\ds |\partial_x^2 \psi_\theta^k(x)|\leq  C\frac{|\alpha|_{W^{1,\infty}}}{\underline{\alpha}\,n\,\theta}{\bf 1}_{x\leq \theta},~~\forall x\in [0,R],\\
\ds |\pare{\partial_x \psi_\theta^k}^2 - 1|_{\infty}\leq  C\frac{|\alpha|_{W^{1,\infty}}}{\underline{\alpha}\,n},
\end{cases}
~~\text{where}~~{\bf 1}_{x\leq \theta}:=\begin{cases}1,~~\text{if}~~0\leq x\leq \theta,\\
0,~~\text{if}~~\theta < x\leq R\end{cases},
\end{eqnarray}
and we made use once again of the bound \eqref{eq:inegalite-crochet-alpha} with $n \geq \frac{|\alpha|_ {W^{1,\infty}}}{\underline{\alpha}}$.

Thus, reintroducing the dependencies on $i$ in our notations, we define likewise a family  $(\psi_{i,\theta}^k)_{i\in [\![1,I]\!], k\in [\![0,n]\!]}$ of $[0,R]$ valued approximations of the identity by initializing $\psi_{i,\theta}^0$ to the identity (for all $i\in [\![1,I]\!]$), and by defining $\psi_{i,\theta}^k$ through \eqref{def:psi} when $k\in [\![1,n]\!]$.

\medskip
{\it Step 2. Analysis}

We aim at showing by induction on the variable $k$ that
\[\forall \theta \in (0,\theta_0],\,\,\forall k\in [\![1,n]\!],\,\,\forall i\in [\![1,I]\!],~~n|u_i^k-\pare{u_i^{k-1}\circ \psi_{i,\theta}^k}|_\infty \leq M^\theta_{k,n} + \varepsilon_n(\theta)\]
where $(\varepsilon_n(\theta))$ is a sequence of functions vanishing as $\theta$ goes to $0$ (non uniformly w.r.t $n$) and $\pare{M^\theta_{k,n}}_{k\in [\![1,n]\!]}$ is a well-chosen purposely designed uniformly bounded sequence of positive numbers.
\medskip

To that end, the main idea is to apply, for each $k\in [\![1,n]\!]$, the comparison theorem to $\mathcal{E}_{k}$ with a super solution of type
\begin{equation}
\label{def:hat-phi}
\hat{\phi}_{i,\theta}^k~:~x\mapsto \pare{u_i^{k-1}\circ \psi_{i,\theta}^k}(x)+\frac{M^\theta_{k,n}}{n} \;\;\;\;i\in [\![1,I]\!]
\end{equation}
\text{ and a sub solution of type}  
\[\check{\phi}_{i,\theta}^k~:~x\mapsto -\pare{u_i^{k-1}\circ \psi_{i,\theta}^k}(x)+\frac{M^\theta_{k,n}}{n}, \;\;\;\;i\in [\![1,I]\!].
\]

Dropping once again the references to the branch index $i$, we have 
\begin{align*}
{\mathcal L}^k\hat{\phi}_{\theta}^k(x) = M_{k,n}^\theta &+ n\pare{\croc{u^{k-1}\circ \psi_\theta^k}(x) - u^{k-1}(x)}\\
&\;\;\;\;\;\;- a(t_k,x)\pare{\pare{\partial_x \psi_\theta^k}^2 \croc{\pare{\partial_x^2 u^{k-1}}\circ \psi^k_\theta}(x) + \partial^2_x \psi_\theta^k \croc{\pare{\partial_x u^{k-1}}\circ \psi^k_\theta}}(x)\\
&\;\;\;\;\;\;+b(t_k,x)\partial_x \psi_\theta^k \croc{\pare{\partial_x u^{k-1}}\circ \psi^k_\theta}(x) + c(t_k,x)\croc{u^{k-1}\circ \psi^k_\theta}(x) + f(t_k, x).
\end{align*}

Note that for $k=1$, the initialization condition ${\mathcal L}^1\hat{\phi}_{\theta}^1 \geq 0$ will be ensured whenever $M_{1,n}^\theta$ satisfies
\begin{equation}
\label{eq:M-init-time-derivative}
 M_{1,n}^\theta \geq n |g(\psi_\theta^1(x)) - g(x)| + C(g),\hspace{0,3 cm}x\in (0,R).
\end{equation}
The initialization conditions needed at the boundaries will be treated later including all the cases.

Let us now fix $k\geq 2$. To avoid an overcrowd of terms in our computations, let us introduce the following set of notations (the symbol $\mathfrak{\Delta v}$ announces a kind of 'error term for $v$' that is due to the use of our approximation of the identity).
Regarding our approximation of the identity~:~
\begin{eqnarray}
\label{eq:delta-coeffs-psi}
\begin{cases}
\mathfrak{id}_\theta^k(x)&=\ds \frac{a(t_k,x)\pare{\partial_x \psi_\theta^k(x)}^2}{a(t_{k-1}, \psi^k_\theta(x))},\\
\mathfrak{\Delta o}_\theta^k(x) &= \pare{1 - \mathfrak{id}_\theta^k(x)}.
\end{cases}
\end{eqnarray}
Regarding the solution~:~
\begin{eqnarray}
\label{eq:delta-coeffs}
\begin{cases}
\mathfrak{\Delta u}_\theta^{k-2}(x) &= \pare{\croc{u^{k-2}\circ \psi^{k-1}_\theta} - \croc{u^{k-2}\circ \psi^{k}_\theta}}(x),\\
\mathfrak{\Delta \tilde{u}}_\theta^{k-2}(x) &= \pare{\croc{u^{k-2}\circ \psi^{k}_\theta} - \croc{u^{k-2}\circ \psi_\theta^{k-1} \circ \psi^{k}_\theta}}(x),\\
\mathfrak{\Delta \tilde{\tilde{u}}}_\theta^{k-2}(x)&= \pare{\croc{u^{k-2}\circ \psi^{k-1}_\theta} - u^{k-2}}(x)\\
\mathfrak{p''}_\theta^k(x) &= -a(t_k,x){\partial^2_x \psi_\theta^k(x)\croc{\pare{\partial_x u^{k-1}}\circ \psi^k_\theta}}(x).
\end{cases}
\end{eqnarray}
Regarding the coefficients of the elliptic problem~:~
\begin{eqnarray}
\label{eq:delta-coeffs-2}
\begin{cases}
\mathfrak{\Delta b}_\theta^k(x) &= \pare{b(t_k,x)\partial_x \psi_\theta^k(x) - b(t_{k-1}, \psi_\theta^k(x))\mathfrak{id}_\theta^k(x)},\\
\mathfrak{\Delta c}_\theta^k(x)&= \pare{c(t_k,x)\partial_x \psi_\theta^k(x) - c(t_{k-1}, \psi_\theta^k(x))\mathfrak{id}_\theta^k(x)},\\
\mathfrak{\Delta f}_{\theta}^k(x) &= \pare{f(t_k,x)- f(t_{k-1}, \psi_\theta^k(x))\mathfrak{id}_\theta^k(x)}.\end{cases}
\end{eqnarray}

With the help of \eqref{eq:estimation-polynomes}, observe that 
\begin{equation}
\label{eq:delta-coeffs-convergence}
\lim\limits_{\theta \searrow 0+}\sup_{k\in [\![2,n]\!]}\pare{|\mathfrak{p''}_\theta^k|_\infty + n|\mathfrak{\Delta u}_\theta^{k-2}|_\infty + n|\mathfrak{\Delta{\tilde{u}}}_\theta^{k-2}|_\infty +  n|\mathfrak{\Delta \tilde{\tilde{u}}}_\theta^{k-2}|_\infty} = 0.
\end{equation}
and also the following bound on the 'error terms'
\begin{eqnarray}
\label{eq:delta-coeffs-estimations}
\begin{cases}
\ds |\mathfrak{\Delta b}_\theta^k|_\infty\leq \ds \frac{C}{n}\pare{|b|_{W^{1,\infty}}(1 + \frac{|\alpha|}{\underline{\alpha}})} + C|b|_{\infty}\sup_{k\in [\![1,n]\!]}|\mathfrak{\Delta o}_\theta^k|_{\infty}\\
\ds |\mathfrak{\Delta c}_\theta^k|_{\infty}\leq \ds \frac{C}{n}\pare{|c|_{W^{1,\infty}}(1 + \frac{|\alpha|}{\underline{\alpha}})} + C|c|_{\infty}\sup_{k\in [\![1,n]\!]}|\mathfrak{\Delta o}_\theta^k|_{\infty}\\
\ds |\mathfrak{\Delta f}_{\theta}^k|_{\infty} \leq \ds \frac{C}{n}\pare{|f|_{W^{1,\infty}}(1 + \frac{|\alpha|}{\underline{\alpha}})} + C|f|_{\infty}\sup_{k\in [\![1,n]\!]}|\mathfrak{\Delta o}_\theta^k|_{\infty}
\end{cases}
\end{eqnarray}
with
\[
|\mathfrak{\Delta o}_\theta^k|_{\infty}\leq C\pare{\frac{|\alpha|_{W^{1, \infty}}}{\underline{\alpha}\,n} + \frac{|a|_{W^{1, \infty}}}{\underline{a}\,n}}.
\]
Our main concern is about second order terms. Using the equation, we have
\begin{align*}
&a(t_{k-1}, \psi^k_\theta(x))\croc{\pare{\partial_x^2 u^{k-1}}\circ \psi^k_\theta}(x)\\
&= n\pare{\croc{u^{k-1}\circ \psi^k_\theta}(x) - \croc{u^{k-2}\circ \psi^k_\theta}(x)}\\
&\;\;\;\;\;\;\;\;\;\;+b(t_{k-1}, \psi_\theta^k(x))\croc{\pare{\partial_x u^{k-1}}\circ \psi^k_\theta}(x) + c(t_{k-1}, \psi_\theta^k(x))\croc{u^{k-1}\circ \psi^k_\theta}(x) + f_i(t_{k-1}, \psi_k^\theta(x)).
\end{align*}
In turn, making use of the notations \eqref{eq:delta-coeffs} and \eqref{eq:delta-coeffs-2}, we get 
\begin{align}
\label{eq:Lhatphi}
&{\mathcal L}^k\hat{\phi}_{\theta}^k(x)=\nonumber\\
&M_{k,n}^\theta - n\pare{u^{k-1}(x) - \croc{u^{k-2}\circ \psi^{k-1}_\theta}(x)}\nonumber\\
&\;\;\;\;\;+n\mathfrak{\Delta o}_\theta^k(x)\mathfrak{\Delta \tilde{u}}_\theta^{k-2}(x)+\mathfrak{\Delta o}_\theta^k(x)n\pare{\croc{u^{k-1}\circ \psi^k_\theta}(x) - \croc{u^{k-2}\circ \psi^{k-1}_\theta \circ \psi^k_\theta}(x)}\\
&\;\;\;\;\;+\mathfrak{\Delta b}_\theta^k(x)\croc{\pare{\partial_x u^{k-1}}\circ \psi^k_\theta}(x) + \mathfrak{\Delta c}_\theta^k(x)\croc{u^{k-1}\circ \psi^k_\theta}(x) + \mathfrak{\Delta f}_{\theta}^k(x) + \mathfrak{p''}_\theta^k(x).\nonumber
\end{align}

Observe also that for all $x\in (0,R)$,
\begin{align*}
\partial_x u^{k-1}(x) &= \partial_x u^{k-1}(x) - \partial_x u^{k-1}(R) = -\int_x^R  \partial^2_x u^{k-1}(z)dz\\
&= -\int_x^R \frac{dz}{a(t_{k-1}, z)}\left \{n\pare{u^{k-1} - u^{k-2}}(z)+ b(t_{k-1},z)\partial_x u^{k-1}(z)\right .\\
&\hspace{1,4 cm}\left .\;+ c(t_{k-1}, y)u^{k-1}(z) + f_i(t_{k-1},z)\right \}.
\end{align*}
Now using
\begin{align*}
\int_x^R \frac{b(t_{k-1},z)}{a(t_{k-1}, z)}\partial_x u^{k-1}(z) = \croc{\frac{b(t_{k-1},z)}{a(t_{k-1}, z)}u^{k-1}(z)}_{x}^R -\int_x^R \partial_z\croc{\frac{b(t_{k-1},z)}{a(t_{k-1}, z)}}u^{k-1}(z)dz,
\end{align*}
we see that
\begin{align*}
\sup_{x\in (0,R)}|\partial_x u^{k-1}(x)| \leq \frac{1}{\underline{a}}\pare{M_{k-1,n}^{\theta} + \frac{C_0}{\underline{a}^2}|b|_{W^{1,\infty}}|a|_{1,\infty} + C_0\pare{|c|_{\infty} + |f|_{\infty}} + n|{\mathfrak{\Delta \tilde{\tilde{u}}}}_\theta^{k-2}|_{\infty}}.
\end{align*}

Denote $\varepsilon_n(\theta) := \sup_{k\in [\![2,n]\!]}\pare{|\mathfrak{p''}_\theta^k|_\infty + n|\mathfrak{\Delta u}_\theta^{k-2}|_\infty + n|\mathfrak{\Delta \tilde{u}}_\theta^{k-2}|_\infty + n|\mathfrak{\Delta \tilde{\tilde{u}}}_\theta^{k-2}|_\infty}$.

From \eqref{eq:Lhatphi}, using all the previous estimates together with the induction hypothesis, we see that in order to guarantee ${\mathcal L}_k\hat{\phi}_{\theta}^k(x)\geq 0$ for all $x\in (0,R)$, it is sufficient that
\begin{equation}
\label{eq:Lphi-positif}
M_{k,n}^\theta \geq M_{k-1,n}^\theta\pare{1 + K\pare{\frac{1}{n} + \varepsilon_n(\theta)}}+ K\pare{\frac{1}{n} + \varepsilon_n(\theta)}
\end{equation} 
with $K=\frac{1}{\underline{a}^3 \vee 1}\pare{1 + \frac{|\alpha|}{\underline{\alpha}} + C_0 + C}\pare{|h|_{W^{1,\infty}} + |c|_{W^{1,\infty}} + |f|_{W^{1,\infty}}}$.
\medskip

Let us now turn to the boundary conditions needed to apply the comparison theorem. 

The continuity condition at the junction point $\{0\}$ is satisfied: indeed, it is satisfied for $(u_i^{k-1})_{i\in [\![1,I]\!]}$ and since for all $(i,j)\in [\![1,I]\!]^2$ $\hat{\psi}_{i,\theta}(0) = \hat{\psi}_{j,\theta}(0) = 0$, we have
\[\hat{\phi}^k_{i,\theta}(0) = u_i^{k-1}({\psi}_{i,\theta}(0)) + \frac{M_{k,n}}{n} = u^{k-1}(0) + \frac{M_{k,n}}{n} = \hat{\phi}^k_{j,\theta}(0) \;\;\;\forall (i,j)\in [\![1,I]\!]^2.\]
It is also clear that for all $x$ in the vicinity of $R$ we have  ${\psi}_{i,\theta}(x) = x$, so that
$\partial_{x}\hat{\phi}_i^k(R) = \partial_x u_i^{k-1}(R) = 0\geq 0$ ($i\in [\![1,I]\!]$) (satisfied when $k=1$ because of the compatibility condition b)(ii)). 

We now look at Kirchhoff's condition. Since the initial condition $(g_i)_{i\in [\![1,I]\!]}$ satisfies itself Kirchhoff's condition (compatibility assumption) we may treat the cases $k=1$ and $k\geq 2$ all together. Observe that for all $i\in [\![1,I]\!]$, $k\in [\![1,n]\!]$, $\hat{\phi}_{i,\theta}^k$ has been constructed such that \[\displaystyle \partial_x\hat{\phi}_{i,\theta}^k(0) = \frac{\alpha_i(t_{k-1})}{\alpha_i(t_k)}\partial_xu_i^k(0).\]

In particular, for fixed $i_0\in [\![1,n]\!]$ and $k\in [\![1,n]\!]$, using Kirchhoff's condition satisfied by $(u_i^k)_{i\in [\![1,I]\!]}$ we have, 
\begin{align*}
0\geq &-\lambda(t_k)\hat{\phi}_{i_0,\theta}^k(0) + \sum_{i=1}^I {\alpha}_i(t_k)\partial_x\hat{\phi}_{\theta}^k(0)-\gamma(t_k)\\
=&-\lambda(t_k)\pare{u_{i_0}^{k-1}(0) + \frac{M_{k,n}^\theta}{n}} + \sum_{i=1}^I {\alpha}_i(t_k)\partial_x\hat{\phi}_{i,\theta}^k(0)-\gamma(t_k)\\
=&-\lambda(t_k)\pare{u^{k-1}(0) + \frac{M_{k,n}^\theta}{n}} + \sum_{i=1}^I {\alpha}_i(t_{k-1})\partial_xu_{i}^{k-1}(0)-\gamma(t_k)\\
=&-\lambda(t_k)\frac{M_{k,n}^\theta}{n} - \pare{\lambda(t_k) - \lambda(t_{k-1})}u^{k-1}(0) - \pare{\gamma(t_{k}) - \gamma(t_{k-1})},
\end{align*}
which is guaranteed whenever
\begin{equation}
\label{eq:Kirchhoff-phi}
M_{k,n}^\theta \geq {\frac{C_0|\lambda|_{W^{1,\infty}} + |\gamma|_{W^{1,\infty}}}{\underline{\lambda}}}.
\end{equation}

In conclusion of our analysis, the family of functions $(\hat{\phi}_{i,\theta}^k)_{k\in [\![1,n]\!]}$ defined in \eqref{def:hat-phi} is assured to be a super solution if the sequence $\pare{M_{k,n}}_{k\in [\![0,n\!]}$ satisfies the initialization condition \eqref{eq:M-init-time-derivative} together with \eqref{eq:Lphi-positif} and \eqref{eq:Kirchhoff-phi}.

{\it Step 3. Synthesis} 

In regard of our previous analysis, we construct a purposely designed sequence $\pare{M^\theta_{k,n}}_{k\in [\![0,n]\!]}$ by setting
\begin{align*}
M_{0,n}^\theta &= {\frac{C_0|\lambda|_{W^{1,\infty}} + |\gamma|_{W^{1,\infty}}}{\underline{\lambda}}} \vee \pare{\varepsilon_n(\theta) + C(g)},\\
M_{k,n}^\theta &= M_{0,n}^\theta\pare{1 + K\pare{\frac{1}{n} + \varepsilon_n(\theta)}}^{k}  + \pare{1 + K\pare{\frac{1}{n} + \varepsilon_n(\theta)}}^{k}.
\end{align*}
Defined thus, the sequence $\pare{M^\theta_{k,n}}_{k\in [\![0,n\!]}$ is purposely constructed in order to satisfy \eqref{eq:M-init-time-derivative}, \eqref{eq:Lphi-positif} and \eqref{eq:Kirchhoff-phi}. Hence, we are in position to apply the comparison theorem to
the family of functions $(\hat{\phi}_{i,\theta}^k)_{i\in [\![1,I]\!]}$. We perform similar computations for the family $(\check{\phi}_{i,\theta}^k)_{i\in [\![1,I]\!]}$ defined in \eqref{def:hat-phi}.
As follows by the application of the comparison theorem, we ensure that for all $\theta \in (0,\theta_0]$, $i\in [\![1,I]\!]$ and $k\in [\![1,n]\!]$~:~
\[
n |u_i^k-\pare{u_i^{k-1}\circ \psi_{i,\theta}^k}|_{\infty} \leq M^\theta_{k,n}.
\]
Now using the explicit expression of $M^\theta_{k,n}$ and letting $\theta$ tend to $0$ in the previous inequality yields finally that
for all $i\in [\![1,I]\!]$ and $k\in [\![1,n]\!]$~:~
\[
n |u_i^k- u_i^{k-1}|_{\infty} \leq  \pare{1 + \frac{C_0|\lambda|_{W^{1,\infty}} + |\gamma|_{W^{1,\infty}}}{\underline{\lambda}}\vee C(g)}\exp\pare{K}
\]
for large enough $n\in \N^\ast$.
\end{proof}
Unfortunately, the previous inequality does not not give a sufficient bound at the junction point $\{0\}$ for our purposes. In order to ensure the convergence of the parabolic scheme involving the local time variable $l$ we need a more refined bound on the time derivative at the junction point $\{0\}$. This is the subject of the next subsection where we refine the previous analysis to get a better bound at the junction point $\{0\}$.

\subsection{Refined estimates for the approximated time derivative at the junction point.}

In this subsection, we give a proof of a specific estimation bound for the approximated time derivative at the junction point that is enough to ensure  the convergence of our forthcoming parabolic scheme in Section \ref{sec: preuve résultat principal}.
In order to derive this key estimate, we replace the construction of the sequence $(M_{k,n})$ in the previous subsection by the construction of a sequence of functions  $(v_{k,n})$ that are solutions of a well-designed system of iterated ODE (see \eqref{eq:ode-parab-estima}). Remarkably enough, our computations show that it is possible to decouple Kirchhoff's condition when passing to the first order time variable error and to prescribe separately the values of the solution and the values of its derivative at the junction.

Note that the system \eqref{eq:ode-parab-estima} possesses only constant coefficients and its solution may be viewed as a kind a supreme envelope of all possible first order time errors. Under this light and from a probabilistic perspective, the structure of \eqref{eq:ode-parab-estima} encompasses the different possible behaviors of the distinct speed measure of reflected stochastic diffusions with characteristics $(\sqrt{2a_i}, b_i)_{i\in [\![1,I]\!]}$. Moreover, it is notable that the same structure equation \eqref{eq:ode-parab-estima} will also be used as the key ingredient to show that the accumulated time spent by the spider motion at the junction point has Lebesgue measure zero (non-stickiness condition), which is a crucial step in order to prove an It\^o formula for the spider motion in presence of discontinuities of the driving coefficients at the junction point.

\medskip{}

Set 
\begin{equation}
\label{eq:def-Theta}
\Theta_n(\lambda, \gamma):={\frac{\sup_{k\in [\![1,n]\!]}|u_k(0)|\;|\lambda|_{\lfloor W^{1,\infty}([]0,T)\rfloor } + |\gamma|_{\lfloor W^{1,\infty}([0,T])\rfloor}}{\underline{\lambda}}}.
\end{equation}

\begin{Proposition}\label{pr : borne deriv temps en 0}
We have
$$\max_{k\in [\![1,n]\!]} \max_{i\in [\![1,I]\!]} n|u_{i}^k(0)-u_{i}^{k-1}(0)|\leq \Theta_n(\lambda, \gamma)\vee C(g)$$
where $C(g)$ is the constant defined in \eqref{eq:C(g)}.
\end{Proposition}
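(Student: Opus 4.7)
The plan is to mimic the strategy of Proposition~\ref{Prop:estimation-time-derivative} but, in order to bypass the exponential factor $e^{K}$ that appears there, the purposely designed sequence of constants $(M_{k,n})$ will be replaced by a sequence of functions $v_{k,n}:[0,R]\to\R_+$ constructed inductively as solutions of a linear ODE along the ray with \emph{constant} coefficients. The key additional flexibility is that at $x=0$ I shall prescribe both $v_{k,n,i}(0)$ (independently of $i\in [\![1,I]\!]$, with common value denoted $v_{k,n}(0)$) and the branch-wise derivatives $\partial_x v_{k,n,i}(0)$ independently; this is precisely what will allow me to decouple Kirchhoff's boundary condition from the interior super-solution constraint and to obtain a bound at the junction that does not reflect the behaviour of $v_{k,n}$ along the whole ray.

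For $\theta\in(0,\theta_0]$, and with the approximation of the identity $\psi_{i,\theta}^k$ constructed in the proof of Proposition~\ref{Prop:estimation-time-derivative}, I will consider the candidate super solution
\[
\hat{\phi}_{i,\theta}^k(x)=\bigl(u_i^{k-1}\circ\psi_{i,\theta}^k\bigr)(x)+\tfrac{1}{n}v_{k,n,i}(x),\qquad x\in[0,R].
\]
Expanding $\mathcal{L}_i^k\hat{\phi}_{i,\theta}^k$ exactly as in~\eqref{eq:Lhatphi} and using the elliptic equation already satisfied by $u^{k-1}$, the inequality $\mathcal{L}_i^k\hat{\phi}_{i,\theta}^k\ge 0$ reduces to a clean inequality on $v_{k,n,i}$ of the form
\[
v_{k,n,i}(x)-v_{k-1,n,i}(x)-\tfrac{\bar a}{n}\partial_x^2 v_{k,n,i}(x)+\tfrac{\bar b}{n}\partial_x v_{k,n,i}(x)+\tfrac{\bar c}{n}v_{k,n,i}(x)+o_\theta(1)\ge 0,
\]
which I impose as an ODE with constant coefficients $(\bar a,\bar b,\bar c)$ depending only on $(\underline a,|a|_\infty,|b|_\infty,|c|_\infty)$; this is the system referred to as~\eqref{eq:ode-parab-estima}. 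At $x=R$ I impose $\partial_x v_{k,n,i}(R)=0$, which transmits the Neumann condition to $\hat{\phi}_{i,\theta}^k$. At $x=0$ I prescribe $v_{k,n}(0)$ so that the recursion remains bounded by $\Theta_n(\lambda,\gamma)\vee C(g)$, and I prescribe the derivatives $\partial_x v_{k,n,i}(0)$ so that the super-solution Kirchhoff inequality
\[
-\lambda(t_k)\,v_{k,n}(0)+\sum_{i=1}^{I}\alpha_i(t_k)\,\partial_x v_{k,n,i}(0)\le\underline\lambda\,\Theta_n(\lambda,\gamma)
\]
holds. The latter follows from the Kirchhoff identity satisfied by $u^{k-1}$ combined with the estimates $|\lambda(t_k)-\lambda(t_{k-1})|\le|\lambda|_{\lfloor W^{1,\infty}\rfloor}/n$ and the analogous one for $\gamma$; this is exactly how $\underline\lambda$ ends up in the denominator of $\Theta_n$, and the compatibility assumption $(\mathcal{H}')$~b)(i) is what makes the step $k=1$ compatible with the initial datum $v_{0,n}$ proportional to $C(g)$.

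An analogous construction produces a sub solution $\check{\phi}_{i,\theta}^k$. Applying the elliptic comparison theorem (Theorem~3.3 of \cite{Ohavi PDE}) to each problem $\mathcal{E}_k$ yields $|u_i^k(x)-(u_i^{k-1}\circ\psi_{i,\theta}^k)(x)|\le v_{k,n,i}(x)/n$ for every $x\in[0,R]$ and $\theta\in(0,\theta_0]$. Specializing at $x=0$ (where $\psi_{i,\theta}^k(0)=0$) and letting $\theta\searrow 0$ gives $n|u_i^k(0)-u_i^{k-1}(0)|\le v_{k,n}(0)$, and the a priori induction $\sup_{k\le n}v_{k,n}(0)\le\Theta_n(\lambda,\gamma)\vee C(g)$ closes the argument.

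The main obstacle is the simultaneous calibration of the interior ODE data together with the junction data $(v_{k,n}(0),\partial_x v_{k,n,i}(0))$: one must ensure that the three constraints -- interior super-solution inequality, Neumann condition at $R$, and decoupled Kirchhoff inequality at $0$ -- are all compatible, while the recursion for $v_{k,n}(0)$ does \emph{not} inherit the exponential blow-up $e^{K}$ of Proposition~\ref{Prop:estimation-time-derivative}. The decoupling at the junction alluded to in the preamble -- i.e.\ the possibility of prescribing $v_{k,n}(0)$ and $\partial_x v_{k,n,i}(0)$ independently in the ODE~\eqref{eq:ode-parab-estima} -- is exactly the structural feature that makes this refined junction-only bound possible.
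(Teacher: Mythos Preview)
Your general strategy is right --- replace the constants $M_{k,n}$ of Proposition~\ref{Prop:estimation-time-derivative} by barrier functions $v_{k,n}$ and read off the junction bound from $v_{k,n}(0)$ --- but the boundary data you propose for the ODE are over-determined and reflect a misunderstanding of how the decoupling actually works.

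You say you will impose $v_{k,n}(0)$, branch-wise derivatives $\partial_x v_{k,n,i}(0)$, \emph{and} $\partial_x v_{k,n,i}(R)=0$. That is three conditions for a second-order ODE on $[0,R]$; in general no solution exists. The paper's construction avoids this: it prescribes only the Cauchy data $(v_{k,n}(0),\partial_x v_{k,n}(0))$ at $x=0$, solves forward, and then checks from the explicit integral formula that $v_{k,n}\ge 0$ and $\partial_x v_{k,n}\ge 0$ on all of $[0,R]$. For a \emph{super}-solution one only needs $\partial_x\hat\phi(R)\ge 0$, not $=0$, and $\partial_x v_{k,n}(R)\ge 0$ follows from monotonicity. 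So the Neumann endpoint is handled by an inequality, not an equation.

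The ``decoupling'' at the junction is also not what you describe. One does not choose $\partial_x v_{k,n,i}(0)$ branch by branch to satisfy a Kirchhoff inequality; one sets $\partial_x v_{k,n}(0)=0$ on every branch (same function on each ray, no dependence on $i$). This kills the term $\sum_i\alpha_i(t_k)\partial_x v_{k,n,i}(0)$ entirely, and the Kirchhoff super-solution condition for $\hat\phi^k$ then reduces --- after using the Kirchhoff identity for $u^{k-1}$ via the design $\partial_x\psi_{i,\theta}^k(0)=\alpha_i(t_{k-1})/\alpha_i(t_k)$ --- to the single requirement $v_{k,n}(0)\ge\Theta_n(\lambda,\gamma)$. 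Combined with the initialization constraint $v_{1,n}(0)\ge C(g)+o_\theta(1)$, one simply \emph{defines} $v_{k,n}(0)=\Theta_n(\lambda,\gamma)\vee\bigl(C(g)+o_\theta(1)\bigr)$ for every $k$. There is no recursion at $x=0$ to ``close'': the junction value is the same constant at every step, and the exponential blow-up of Proposition~\ref{Prop:estimation-time-derivative} is pushed entirely into the interior, where $v_{k,n}(x)$ is allowed to explode with $n$.

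Finally, the interior super-solution inequality is not imposed in the form you wrote. After using $v_{k,n}\ge 0$ (to discard the term $(1+c_i/n)v_{k,n}$) and $\partial_x v_{k,n}\ge 0$ (to replace $b_i/a_i$ by $-|b|_\infty/\underline a$), the condition $\mathcal L_i^k\hat\phi_{i,\theta}^k\ge 0$ is implied by the \emph{equation} $\partial_x^2 v_{k,n}+\tfrac{|b|_\infty}{\underline a}\partial_x v_{k,n}=\kappa_{k,n}^\theta$ with source $\kappa_{k,n}^\theta\propto \tfrac{n}{\underline a}v_{k-1,n}+\text{(lower order)}$ --- no $v_{k,n}$ term survives on the left. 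It is precisely this structure (Cauchy problem at $0$, positive source, no zeroth-order term) that gives the monotonicity needed for the Neumann endpoint and makes the whole construction consistent.
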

\begin{proof}
As already mentioned, the main idea is to perform the same computations carried over in the proof of Proposition \ref{Prop:estimation-time-derivative}, but replacing there the construction of the sequence $(M_{k,n})$ by the construction of a sequence of functions  $(v_{k,n})$ whose values at $x=0$ depend crudely on the parameters in Kirchhoff's condition. Such a sequence of functions will naturally tragically explode as $n$ tends to infinity on the interior of each branch -- except at the junction point $\{0\}$ -- which is just enough for our purposes.

Let us introduce the sequence $(v_{k,n})_{k\in [\![0,n]\!]}$ by setting
$$v^\theta_{0,n}\equiv~~\max_{i\in[\![1,I]\!]}\Big\{~~\sup_{x\in(0,R)}|-a_i(0,x)\partial_x^2g_i(x)+b_{i}(0,x)\partial_xg_i(x))|~~\Big\}$$
and by defining $v_{k,n}$ for $k \in [\![1,n]\!]$ inductively as the unique solution in $\mathcal{C}^{2}([0,R])$ of the following well-posed second order ordinary differential equation:
\begin{eqnarray}
\label{eq:ode-parab-estima}
\begin{cases}\displaystyle\partial_{x}^2v^\theta_{k,n}(x)+\frac{|b|_\infty}{\underline{a}}\partial_xv^\theta_{k,n}(x)=\kappa^\theta_{k,n}(x),~~ \text{ if }  x\in(0,R),\\
\partial_xv^\theta_{k,n}(0)=0,\\
v^\theta_{k,n}(0)=\Theta_n(\lambda, \gamma)\vee \pare{C(g) + n \max\limits_{i\in [\![1,I]\!]}|g\circ \psi_{i,\theta}^1 - g|_\infty},
\end{cases}
\end{eqnarray}
where the source term $\kappa^\theta_{k,n}$ is given by induction by setting
$$\kappa^\theta_{k,n}(x):=\frac{n}{\underline{a}}\Big(1+K(\frac{1}{n}+\varepsilon_n(\theta))\Big)v^\theta_{k-1,n}(x)+\frac{Kn}{\underline{a}}(\frac{1}{n}+\varepsilon_n(\theta)).$$
Here the constant $K$ and the error term $\varepsilon_n(\theta)$ are the same that appear in the proof of Proposition \ref{Prop:estimation-time-derivative}.
The ODE \eqref{eq:ode-parab-estima} is explicitly solvable for any $k\in [\![1,n]\!]$ with its solution given by
\begin{eqnarray*}
v^\theta_{k,n}(x)=v^\theta_{k,n}(0) + \int_0^x\exp\Big(-\frac{|b|_\infty}{\underline{a}}z\Big)\int_0^z\exp\pare{\frac{|b|_\infty}{\underline{a}}u}\kappa^\theta_{k,n}(u)du\,dz.
\end{eqnarray*}
Note that from the explicit form of $v^\theta_{k,n}$ it is easy to show by induction that 
\begin{eqnarray}\label{eq : conditions0 fonction test}
\forall x \in[0,R],~~0\leq v^\theta_{k,n}(x),~~0\leq \partial_xv^\theta_{k,n}(x),
\end{eqnarray}
In particular, note that $v^\theta_{k,n}$ is an increasing function.

Remember the definition of our family of approximations of the identity $\pare{\psi_{\theta}^k}$ introduced in \eqref{def:psi}.
Following the proof of the Proposition \ref{Prop:estimation-time-derivative}, we show by induction that the following maps
\begin{equation}
\label{def:hat-phi-0}
\hat{\phi}_{i,\theta}^k~:~x\mapsto \pare{u_i^{k-1}\circ \psi_{i,\theta}^k}(x)+\frac{v^\theta_{k,n}(x)}{n} \text{ and }  \check{\phi}_{i,\theta}^k~:~x\mapsto \pare{u_i^{k-1}\circ \psi_{i,\theta}^k}(x)-\frac{v^\theta_{k,n}(x)}{n},\;i\in [\![1,I]\!]
\end{equation}
are respectively super and sub solution of the corresponding elliptic problems. 

Initialization holds true due to the conditions imposed on $g$ at $x=0$ and $x=R$, the expression of the constant $v_{0,n}^{\theta}$ and the fact that $\psi_{\theta}^0\equiv id$.
Following the same computations as in {\it Step 2. Analysis} in the proof of Proposition \ref{Prop:estimation-time-derivative}, we are going to show that the conditions needed to ensure the comparison on the whole domain, namely
\begin{eqnarray*}
\begin{cases}
\hat{\phi}_{\theta}^k\in \mathcal{C}^2\pare{\mathcal{N}_R}\\
\displaystyle \partial_x \hat{\phi}_{i,\theta}^k(R) \ge 0,\\
\ds -\lambda(t_k)\hat{\phi}_{i,\theta}^k(0)+\sum_{i=1}^I\partial_x\hat{\phi}_{i,\theta}^k(0)-\gamma(t_k)\leq 0,\\
\displaystyle \mathcal{L}_i^k\hat{\phi}_{i,\theta}^k(x)=
\ds n(\pare{\hat{\phi}_{i,\theta}^k(x) - u_{i}^{k-1}(x)}-a_i(t_k,x)\partial_x^2\hat{\phi}_{i,\theta}^k(x)\\
\hspace{0,4 cm}\ds + b_{i}(t_k,x)\partial_x \hat{\phi}_{i,\theta}^k(x)+ c_i(t_k,x)\hat{\phi}_{i,\theta}^k(x) -f_i(t_k,x)\ge 0,
\end{cases}
\end{eqnarray*}
are satisfied.

Clearly, since $g$, $u^{k-1}$, $(\psi_{i,\theta})_{i\in [\![1,I]\!]}$ and the family $(v_{k,n}^\theta)_{i\in [\![1,I]\!]}$ all belong to $\mathcal{C}^2\pare{\mathcal{N}_R}$, we verify that $\hat{\phi}_{\theta}^k\in \mathcal{C}^2\pare{\mathcal{N}_R}$. Moreover, using the definition $\partial_xv^\theta_{k,n}(0)=0$, similar arguments as those used in the proof of Proposition \ref{Prop:estimation-time-derivative} ensure that the boundary inequality at junction point holds true. More precisely, using Kirchhoff's condition satisfied by $u^{k-1}$ and the initial values $v_{k,n}^\theta(0)\geq \Theta(\lambda, \gamma)$ and $\partial_x v_{k,n}^\theta(0)=0$ imply $-\lambda(t_k)\hat{\phi}_{i,\theta}^k(0)+\sum_{i=1}^I\partial_x\hat{\phi}_{i,\theta}^k(0)-\gamma(t_k)\leq 0$ for any $k\in [\![1,n]\!]$. 

At $x=R$, we have $$\partial_x\hat{\phi}_{i,\theta}^k(R) = \displaystyle \partial_x\Big(\pare{u_i^{k-1}\circ \psi_{i,\theta}^k}(x)+\frac{v^\theta_{k,n}(x)}{n}\Big)_{x=R}=\partial_x u_i^{k-1}(R)+\partial_xv^\theta_{k,n}(R)\ge 0,$$
because the positive derivative of $v^\theta_{k,n}$.

\medskip

We now focus on the remaining inequality involving the operator ${\mathcal L}_{i}^k$ on each edge.

For $k=1$, the initialization condition ${\mathcal L}_i^1\hat{\phi}^{1}_{i,\theta} \geq 0$ 
is ensured if
\begin{equation}
\label{eq:M-init-time-derivative-2}
 v_{1,n}^\theta \geq n |g(\psi_{i,\theta}^1(x)) - g(x)| + C(g),
\end{equation}
which 
is clearly satisfied because of our definition of  $v_{1,n}^\theta(0)$ and because $v_{1,n}^\theta$ is an increasing function.

Let us now turn to the case $k>1$ and fix $k\in [\![2,n]\!]$. Our induction hypothesis asserts that $\hat{\phi}_{i,\theta}^{k-2}$ is a super solution of $\mathcal{E}_{k-1}$. By comparison we have then that $u_{i}^{k-1} \leq \hat{\phi}_{i,\theta}^{k-1}$. 

In order to simplify the exposition, let us denote ${\mathcal H}_i^{k,n}$ the operator acting on $\phi \in C^2([0,R])$ defined by
\begin{align*}
{\mathcal H}_i^{k,n}\phi(x) &= -\frac{a_i(t_k,x)}{n}\partial_x^2 \phi(x) +\frac{\,h_{i}(t_k,x)}{n}\partial_x \phi(x) + \frac{c_i(t_k,x)}{n}\phi(x) + \frac{f_i(t_k,x)}{n}.
\end{align*}

Dropping any reference to the branch index $i$, using this notation together with the notations used in the proof of Proposition \ref{Prop:estimation-time-derivative}, we have
\begin{align}
\label{eq:Lhatphi-2}
&{\mathcal  L}^k\hat{\phi}_{\theta}^k(x)\nonumber\\
&=\croc{Id + {\mathcal H}^{k,n}}v^\theta_{k,n}(x) - v^{\theta}_{k-1,n}(x) - n\underbrace{{\pare{u^{k-1}(x) - \croc{u^{k-2}\circ \psi^{k-1}_\theta}(x) - \frac{v^\theta_{k-1,n}(x)}{n}}}}_{=u_{i}^{k-1} - \hat{\phi}_{i,\theta}^{k-1}\leq 0}\nonumber\\
&\;\;\;\;\; + n\mathfrak{\Delta u}_\theta^{k-2}(x) + n\mathfrak{\Delta o}_\theta^k(x)\mathfrak{\Delta \tilde{u}}_\theta^{k-2}(x)  +\mathfrak{\Delta o}_\theta^k(x)n\left \{\croc{u^{k-1}\circ \psi^{k}_\theta}(x) - \croc{u^{k-2}\circ \psi^{k-1}_\theta \circ \psi^{k}_\theta}(x)\right \} \nonumber\\
&\;\;\;\;\;+\mathfrak{\Delta h}_\theta^k(x)\croc{\pare{\partial_x u^{k-1}}\circ \psi^k_\theta}(x) + \mathfrak{\Delta c}_\theta^k(x)\croc{u^{k-1}\circ \psi^k_\theta}(x) + \mathfrak{\Delta f}_{\theta}^k(x) + \mathfrak{p''}_\theta^k(x).
\end{align}
Using the induction hypothesis and the regularity of the coefficients lead first to
\begin{eqnarray*}
\mathcal{L}^k\hat{\phi}_{\theta}^k(x) \geq \croc{Id + {\mathcal H}^{k,n}}v^\theta_{k,n}(x) -\Big(1 + K\Big(\frac{1}{n} + \varepsilon_n(\theta)\Big)v_{k-1,n}^\theta(x) - K\Big(\frac{1}{n} + \varepsilon_n(\theta)\Big)\Big).
\end{eqnarray*}
where the constants $K$ and the term of error $\varepsilon_n(\theta)$ have the same expressions given in the last proposition. 
Now using that $v^\theta_{k,n}$ is positive, as well as its first derivative, we see (since $n\ge \lceil |c|_\infty \rceil+1)$ that:
\begin{align*}
\displaystyle \croc{Id + {\mathcal H}_i^{k,n}}v^\theta_{k,n}(x) &= v^\theta_{k,n}(x)-\frac{a_i(t_k,x)}{n}\partial_x^2v^\theta_{k,n}(x) + \frac{b_{i}(t_k,x)}{n}\partial_x v^\theta_{k,n}(x)+ c_i(t_k,x)\frac{v^\theta_{k,n}(x)}{n}\\
&\geq \displaystyle v^\theta_{k,n}(x)\pare{1+\frac{c_i(t_k,x)}{n}}-\frac{a_i(t_k,x)}{n}\Big(\partial_x^2v^\theta_{k,n}(x)- \frac{b_{i}(t_k,x)}{a_i(t_k,x)}\partial_x v^\theta_{k,n}(x)\Big)\\
&\geq \ds -\frac{a_i(t_k,x)}{n}\Big(\partial_x^2v^\theta_{k,n}(x)+\frac{|b|_\infty}{\underline{a}}\partial_x v^\theta_{k,n}(x)\Big).
\end{align*}
From the previous computations and remembering the inductive definition of
$\kappa^\theta_{k,n}(x)$ gives finally
\begin{align*}
\ds \mathcal{L}^k\hat{\phi}_\theta^{k}(x)&\ge
\ds
-\Big(1+K(\frac{1}{n}+\varepsilon_n(\theta))\Big)v^\theta_{k-1,n}(x)-K\pare{\frac{1}{n}+\varepsilon_n(\theta)}\\
&\hspace{2,0 cm}-\frac{a_i(t_k,x)}{n}\Big(\partial_x^2v^\theta_{k,n}(x)+\frac{|b|_\infty}{\underline{a}}\partial_x v^\theta_{k,n}(x)\Big)\\
&\ge \ds \frac{a_i(t_k,x)}{n}\left \{-\partial_x^2v^\theta_{k,n}(x)-\frac{|b|_\infty}{\underline{a}}\partial_x v^\theta_{k,n}(x)+\kappa^\theta_{k,n}(x)\right \} = 0,
\end{align*}
where the equality to $0$ is ensured by the fact that $v^\theta_{k,n}$ satisfies the first line in
\eqref{eq:ode-parab-estima}.
Hence, we have proved that
$$ \mathcal{L}_{i}^k\hat{\phi}_{i,\theta}^{k}(x)\ge 0,$$
which holds for all $i\in [\![1,I]\!]$ and $x\in (0,R)$. 

In conclusion, we ensure that, for any $k\in [\![1,n]\!]$, the family $\pare{\hat{\phi}_{i,\theta}^{k}}_{i\in [\![1,I]\!]}$ is a super solution for $\mathcal{E}_k$. Applying the comparison principle ensures that for all $k\in [\![1,n]\!]$:
$$ \forall x\in[0,R],~~n\pare{u_i^{k}(x)-\pare{u_i^{k-1}\circ \psi_{\theta}^k}(x)}\leq v^\theta_{k,n}(x).$$
The same type of computation may be performed to prove that the family $\pare{\check{\phi}_{i,\theta}^{k}}_{i\in [\![1,I]\!]}$ is indeed a sub solution for $\mathcal{E}_k$ (for any $k\in [\![1,n]\!]$). Applying the comparison principle ensure then that
for all $k\in [\![1,n]\!]$:
$$ \forall x\in[0,R],~~n|u_i^{k}(x)-\pare{u_i^{k-1}\circ \psi_{\theta}^k}(x)|\leq v^\theta_{k,n}(x).$$

In particular since $\psi_{{\theta}}^k(0)=0$ and remembering our prescribed initial condition on $v^\theta_{k,n}(0)$, we conclude that
$$\max_{k\in [\![1,n]\!]} \max_{i\in [\![1,I]\!]} n|u_{i}^k(0)-u_{i}^{k-1}(0)|\leq \Theta_n(\lambda, \gamma)\vee \pare{C(g) + n \max_{i\in [\![1,I]\!]}|g\circ \psi_{i,\theta}^1 - g|_\infty}.$$
The result of the proposition follows then by letting $\theta$ tend to zero in the right hand side.
\end{proof}

%%%%%%%%%%%%%%%%%%%%%%%%%%%%%%%%%%%%%%%%%%%%%%%%%%%%%%%%%%%%
\subsection{Global gradient estimate}

\begin{Proposition}
\label{prop:non-Bernstein-estimate}
There exists a constant $C$ depending only on the data of the system, such that
\begin{equation}
    \label{eq:non-Bernstein-estimate}
\sup_{n} \max_{k\in [\![1,n]\!]} \max_{i\in [\![1,I]\!]} |\partial_x u_i^k|_\infty \leq C_2,
\end{equation}
with
\begin{equation}
\label{eq:def-C2-gradient}
C_2 := C'\exp\pare{\frac{R|b|_\infty}{\underline{a}}} + \frac{C_1 + |c|_\infty C_0 + |f|_\infty}{|b|_\infty}\pare{\exp\pare{\frac{R|b|_\infty}{\underline{a}}} - 1},
\end{equation}
where we have set
\begin{equation}
\label{eq:def-Cprime}
C':=\frac{1}{\underline{a}}\pare{RC_1 + C_0(2|b|_\infty + R(|c|_\infty + |f|_\infty)) + \frac{2|a|_{W^{1,\infty}}|b|_{W^{1,\infty}}}{\underline{a}}}.
\end{equation}
\end{Proposition}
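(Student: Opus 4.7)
On each edge $i\in [\![1,I]\!]$, rewrite the elliptic equation \eqref{eq: schema ell} as a first-order linear ODE for $v_i := \partial_x u_i^k$:
\[
v_i'(x) \;=\; \frac{b_i(t_k, x)}{a_i(t_k, x)}\, v_i(x) \;+\; \frac{G_i(x)}{a_i(t_k, x)}, \qquad G_i := n(u_i^k - u_i^{k-1}) + c_i(t_k, \cdot)\, u_i^k - f_i(t_k, \cdot),
\]
supplemented with the Neumann condition $v_i(R)=0$. Thanks to Propositions~\ref{prop:uniform-bound} and~\ref{Prop:estimation-time-derivative}, the source satisfies $|G_i|_\infty \leq C_1 + |c|_\infty C_0 + |f|_\infty$ uniformly in $n$, $k$, $i$. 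The plan is in two steps~: first, bound the initial value $|v_i(0)|$ independently of the other edges ; second, propagate this bound across $[0,R]$ via Duhamel's formula (Gronwall). The main obstacle lies in the first step: Kirchhoff's boundary condition only provides a single scalar relation $\sum_{i=1}^I \alpha_i(t_k)\, v_i(0) = \lambda(t_k)\, u^k(0) + \gamma(t_k)$ that couples all the $v_i(0)$'s together and cannot control them individually.

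\textbf{Bound at the junction.} To bound each $v_i(0)$ separately I would use the Neumann condition $v_i(R)=0$ at the other end: since $v_i(0) = -\int_0^R \partial_x^2 u_i^k(y)\, dy$, the equation gives
\[
v_i(0) \;=\; -\int_0^R \frac{1}{a_i(t_k, y)}\Bigl[\, b_i(t_k, y)\, v_i(y) + G_i(y)\, \Bigr] dy.
\]
A circular dependence on $v_i$ in the right-hand side is defused by an integration by parts that transfers the $x$-derivative from $u_i^k$ to the coefficient~:
\[
\int_0^R \frac{b_i(t_k, y)}{a_i(t_k, y)}\, v_i(y)\, dy \;=\; \Bigl[\tfrac{b_i}{a_i}\, u_i^k\Bigr]_0^R \;-\; \int_0^R \partial_y\!\Bigl(\tfrac{b_i}{a_i}\Bigr)\, u_i^k(y)\, dy.
\]
The boundary terms are then bounded via $|u_i^k|_\infty \leq C_0$ and the ellipticity, while the integrand is controlled by the Lipschitz regularity of $a_i, b_i$. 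Collecting all the contributions together with $|u_i^k|_\infty \leq C_0$, $n|u_i^k - u_i^{k-1}|_\infty \leq C_1$ and $a_i \geq \underline{a}$, I obtain $|v_i(0)| \leq C'$ with $C'$ exactly as in \eqref{eq:def-Cprime}.

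\textbf{Propagation by Gronwall.} For the linear ODE above, Duhamel's formula yields
\[
v_i(x) = \exp\!\Bigl(\int_0^x \tfrac{b_i}{a_i}(t_k,z)\, dz\Bigr)\, v_i(0) + \int_0^x \exp\!\Bigl(\int_y^x \tfrac{b_i}{a_i}(t_k,z)\, dz\Bigr)\, \frac{G_i(y)}{a_i(t_k, y)}\, dy,
\]
so that $|b_i/a_i| \leq |b|_\infty/\underline{a}$, together with the bounds on $|v_i(0)|$ and $|G_i|_\infty$, gives for every $x\in [0,R]$
\[
|v_i(x)| \;\leq\; C'\, \exp\!\Bigl(\tfrac{R|b|_\infty}{\underline{a}}\Bigr) \;+\; \frac{C_1 + |c|_\infty C_0 + |f|_\infty}{|b|_\infty}\, \Bigl(\exp\!\Bigl(\tfrac{R|b|_\infty}{\underline{a}}\Bigr) - 1\Bigr) \;=\; C_2,
\]
after computing $\int_0^x e^{(x-y)|b|_\infty/\underline{a}}\, dy \leq (\underline{a}/|b|_\infty)\bigl(e^{R|b|_\infty/\underline{a}}-1\bigr)$. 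Since $C_0$, $C_1$ and $C'$ do not depend on $n$, $k$ or $i$, the estimate \eqref{eq:non-Bernstein-estimate} follows.
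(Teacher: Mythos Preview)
Your proof is correct and follows essentially the same route as the paper: both obtain the bound $|v_i(0)|\le C'$ by integrating $\partial_x^2 u_i^k$ over $[0,R]$, exploiting the Neumann condition at $R$ and integrating the $b_i\partial_x u_i^k/a_i$ term by parts, and then propagate this bound across the edge via Gr\"onwall (you write Duhamel explicitly, the paper quotes the differential inequality $|\partial_x^2 u_i^k|\le A+B|\partial_x u_i^k|$ and invokes Gr\"onwall). The only cosmetic difference is that the paper phrases the second step as a Gr\"onwall bound on $|\partial_x u_i^k|$ rather than using the exact Duhamel representation, but the resulting constant $C_2$ is identical.
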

\begin{proof}
Let $n\in \N^\ast$, $k\in [\![1,n]\!]$ and $i\in [\![1,I]\!]$.

Since
$$\partial_x^2u_i^k(x)=\frac{n(u_i^k-u_i^{k-1})(x)}{a_i(x)}+\frac{b_i(x)\partial_xu_i^k(x)}{a_i(x)}+\frac{c_i(x)u_i^k(x)}{a_i(x)}-\frac{f(x)}{a_i(x)},$$
by proceeding to integration between $0$ and $R$ and integrating by parts the gradient term using $\partial_xu^k_i(R) = 0$, we have
\begin{align*}
-\partial_xu_i^k(0)&=\int_0^R\frac{n(u_i^k-u_i^{k-1})(x)}{a_i(x)}dx-\int_0^R\partial_x\pare{\frac{b_i(x)}{a_i(x)}}u_i^k(x)dx+\croc{\frac{b_i(x)u_i^k(x)}{a_i(x)}}_0^R\\
&\;\;\;+\int_0^R\pare{\frac{c_i(x)u_i^k(x)}{a_i(x)}-\frac{f_i(x)}{a_i(x)}}dx.
\end{align*}
Using the results of Propositions \ref{prop:uniform-bound} and \ref{Prop:estimation-time-derivative}, the ellipticity of $a$ and the assumptions $(\mathcal{H}')$ on the coefficients $(a,b,c,f)$ yield an uniform bound 
\[
\sup_{n} \max_{k\in [\![1,n]\!]} \max_{i\in [\![1,I]\!]}|\partial_xu_i^k(0)| \leq C'\]
with 
$$
C':=\frac{1}{\underline{a}}\pare{RC_1 + C_0(2|b|_\infty + R(|c|_\infty + |f|_\infty)) + \frac{2|a|_{W^{1,\infty}}|b|_{W^{1,\infty}}}{\underline{a}}}.
$$
On another hand, from the results of Propositions \ref{prop:uniform-bound} and \ref{Prop:estimation-time-derivative}, we have see that:\\ $\forall x\in [0,R]$,
$$|\partial_x^2u_i^k(x)|\leq \frac{1}{\underline{a}}\big(C_1+|b|_\infty|\partial_xu_i^k(x)|+|f|_\infty+|c|_\infty C_0\big):=A+B|\partial_xu_i^k(x)|,$$
where $C_0$ and $C_1$ are given respectively in \eqref{eq:def-C0-schema-elliptique} and \eqref{eq:C1}.
Hence, we are in position to use Gr\"onwall's lemma, which gives
$$|\partial_xu_i^k(x)|\leq |\partial_xu_i^k(0)|\exp(BR)+\frac{A}{B}(\exp(BR)-1)$$
(where we use the convention $\frac{{\rm e}^{c0} - 1}{0} = c$).
Hence,
\[
\sup_{n} \max_{k\in [\![1,n]\!]} \max_{i\in [\![1,I]\!]} |\partial_x u_i^k|_\infty \leq C_2
\]
with
$$
C_2 = C'\exp\pare{\frac{R|b|_\infty}{\underline{a}}} + \frac{C_1 + |c|_\infty C_0 + |f|_\infty}{|b|_\infty}\pare{\exp\pare{\frac{R|b|_\infty}{\underline{a}}} - 1}.
$$
\end{proof}

%%%%%%%%%%%%%%%%%%%%%%%%%%%%%%%%%%%%%%%%%%%%%%%%%%%%%%%%%%%%%%%%%%%%%%%%%%%%%%%%%%%%%%%%%%%%%%%%%%%%%%%%%%%%%%%%%%%%%%%%%%%%%%%%%%%%%%%%%%%%%%%%%%%%%%%%%%%%%%%%%%%%%%%%%%
%%%%%%%%%%%%%%%%%%%%%%%%%%%%%%%%%%%%%%%%%%%%%%%%%%%%%%%%%%%%%%%%%%

\subsection{The main result of Von Below \cite{Von Below} revisited}

The results of the preceding subsection lead us to gather uniform estimates of the sequence $(u^k)_{k\in [\![0,n]\!]}$ and its partial derivatives. As shown below, similar arguments as those used for the proof of Theorem 2.2 in \cite{Ohavi PDE} give us assurance of the convergence of the elliptic scheme $\pare{{\mathcal E}_k}_{k\in [\![0,n]\!]}$. In turn, this allows us to state the following theorem --~which is somewhat a refined version in the case of a star shaped network~-- of the main result obtained by Von Below in \cite{Von Below}. We recall for the convenience of the reader, the functional spaces used for the class of solvability of the parabolic system \eqref{eq : pde para }.
\begin{align*}
&\mathcal{C}^{\frac{\alpha}{2},1+\alpha}\big([0,T]\times\mathcal{N}_R\big):=\Big\{f:=[\![1,I]\!]\times[0,T]\times [0,R]\to \R,~~(i,t,x)\mapsto f_i(t,x),~~\Big|\\
&\hspace{6.5 cm}\forall i\in [\![1,I]\!],~~f_i\in \mathcal{C}^{\frac{\alpha}{2},1+\alpha}\big([0,T]\times[0,R]\big)\Big\},\\
&\mathcal{C}^{1+\alpha,2+\alpha}\big((0,T)\times \overset{\circ}{\mathcal{N}_R^*}\big):=\Big\{f:=[\![1,I]\!]\times[0,T]\times [0,R]\to \R,~~(i,t,x)\mapsto f_i(t,x),~~\Big|\\
&\hspace{6.5 cm} \forall i\in [\![1,I]\!],~~f_i\in \mathcal{C}^{1+\alpha,2+\alpha}\big((0,T)\times(0,R)\big)\Big\}.
\end{align*}
\begin{Theorem}\label{th: ex sys para}
Assume that the data $\mathfrak{D}^{'}$ satisfy assumptions $(\mathcal{H}^{'})$. Then the parabolic system \eqref{eq : pde para }
is uniquely solvable in the class $\mathcal{C}^{\frac{\alpha}{2},1+\alpha}\big([0,T]\times\mathcal{N}_R\big)\cap\mathcal{C}^{1+\alpha,2+\alpha}\big((0,T)\times \overset{\circ}{\mathcal{N}_R^*}\big)$.
Moreover, there exist constants $(C_0,C_1,C_2,C_3)$, depending only on $R$, $T$, and the data $\mathfrak{D}^{'}$, such that
\begin{eqnarray*}
||u||_{\infty}\leq C_0,~~||\partial_tu||_{\infty}\leq C_1,~~||\partial_xu||_{\infty}\leq C_2,~~||\partial^2_{x}u||_{\infty}\leq C_3,
\end{eqnarray*}
with the expressions of $C_0, C_1, C_2, C_3$ given respectively by
$$
C_0 := \pare{\frac{|\gamma|_\infty}{\underline{\lambda}}\vee |g|_\infty + |f|_\infty}{\rm e}^{|c|_\infty + 1};\;\;\;\;\;C_1:=\pare{1 + \frac{C_0|\lambda|_{W^{1,\infty}} + |\gamma|_{W^{1,\infty}}}{\underline{\lambda}}\vee C(g)}\exp\pare{K};
$$
$$
C_2:= C'\exp\pare{\frac{R|b|_\infty}{\underline{a}}} + \frac{C_1 + |c|_\infty C_0 + |f|_\infty}{|b|_\infty}\pare{\exp\pare{\frac{R|b|_\infty}{\underline{a}}} - 1};$$
$$C_3:=\frac{1}{\underline{a}}\pare{C_1 + |b|_\infty C_2 + |c|_\infty C_0 + |f|_\infty};
$$
with
$$C(g):=|a|_\infty|\partial_x^2 g|_\infty + |b|_\infty|\partial_x g|_\infty + |c|_\infty|g|_\infty + |f|_\infty;$$
$$\ds K:=\frac{1}{\underline{a}^3 \vee 1}\pare{1 + \frac{|\alpha|}{\underline{\alpha}} + C_0 + C}\pare{|b|_{W^{1,\infty}} + |c|_{W^{1,\infty}} + |f|_{W^{1,\infty}}},$$\;\;\;\;
where $C$ stands for the universal constant of Proposition \ref{Prop:estimation-time-derivative};
$$C':=\exp\pare{\frac{R|b|_\infty}{\underline{a}}} + \frac{C_1 + |c|_\infty C_0 + |f|_\infty}{|b|_\infty}\pare{\exp\pare{\frac{R|b|_\infty}{\underline{a}}} - 1}$$
(with the convention $\frac{{\rm e}^{0c} - 1}{0} = c$).

Moreover, set $\;\ds \Theta(\lambda, \gamma):={\frac{||u||_{\infty}\;|\lambda|_{\lfloor W^{1,\infty}([0,T])\rfloor} + |\gamma|_{\lfloor W^{1,\infty}([0,T])\rfloor}}{\underline{\lambda}}}$. Then,
\begin{equation}
\label{eq:estim-deriv-zero}
\max_{i\in [\![1,I]\!]}|\partial_tu_i(.,0)|_\infty\leq \Theta(\lambda, \gamma)\vee C(g).
\end{equation}
\end{Theorem}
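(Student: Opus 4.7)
The plan is to pass to the limit in the elliptic scheme $(\mathcal{E}_k)_{k\in [\![1,n]\!]}$ from \eqref{eq: schema ell} by combining the uniform estimates obtained in Propositions \ref{prop:uniform-bound}, \ref{Prop:estimation-time-derivative}, \ref{pr : borne deriv temps en 0} and \ref{prop:non-Bernstein-estimate}. The overall strategy mirrors the one used in \cite{Ohavi PDE}, but the linear setting allows several simplifications.

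\textbf{Step 1: Construction of the interpolant and additional second-order estimate.} Given the elliptic scheme solutions $(u^k)_{k\in [\![0,n]\!]}$, I would introduce the piecewise affine in time interpolant
\begin{equation*}
u^n(t,x) = u^{k-1}(x) + n(t - t_{k-1})\pare{u^k(x) - u^{k-1}(x)}, \quad t\in [t_{k-1}, t_k],\; x\in \mathcal{N}_R.
\end{equation*}
By Proposition \ref{prop:uniform-bound}, $|u^n|_\infty \leq C_0$ uniformly in $n$. By Proposition \ref{Prop:estimation-time-derivative}, the discrete time derivative gives $|\partial_t u^n|_\infty \leq C_1$ uniformly. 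For the spatial derivatives, Proposition \ref{prop:non-Bernstein-estimate} yields $|\partial_x u^n|_\infty \leq C_2$ on each branch. The second order estimate follows from the equation itself: from $\mathcal{E}_k$,
\begin{equation*}
a_i(t_k,x)\partial_x^2 u_i^k(x) = n\pare{u_i^k - u_i^{k-1}}(x) + b_i(t_k,x)\partial_x u_i^k(x) + c_i(t_k,x)u_i^k(x) - f_i(t_k,x),
\end{equation*}
so the ellipticity assumption combined with the bounds on $C_0, C_1, C_2$ and the data yields $|\partial_x^2 u^k|_\infty \leq C_3$ uniformly in $n$ and $k$, with the stated expression for $C_3$.

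\textbf{Step 2: Compactness and extraction of a limit.} With the uniform bounds $|u^n|_\infty,|\partial_t u^n|_\infty, |\partial_x u^n|_\infty, |\partial_x^2 u^n|_\infty$ at hand on each ray, the sequence $(u^n)$ is equi-Lipschitz in $(t,x)$ globally and $(\partial_x u^n)$ is equi-Lipschitz in $x$ on each ray. Arzel\`a--Ascoli and a diagonal extraction then produce a subsequence (still denoted $u^n$) converging uniformly on $[0,T]\times \mathcal{N}_R$, together with uniform convergence of $\partial_x u^n$ on each branch, to a limit $u\in \mathcal{C}^{0,1}([0,T]\times \mathcal{N}_R)$ satisfying the same bounds and preserving the continuity condition at the junction $\{0\}$. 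The weak limits of $\partial_t u^n$ and $\partial_x^2 u^n$ are identified as generalized derivatives of $u$, living in $L^\infty$ by Fatou's inequality.

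\textbf{Step 3: Passage to the limit in the PDE and in Kirchhoff's condition.} The main subtle point is to pass to the limit in the boundary relations. On each ray the limiting generalized solution satisfies the linear parabolic equation in the sense of distributions, which can be checked by multiplying $\mathcal{E}_k$ by a test function compactly supported on the interior of a ray and summing over $k$, then letting $n\to \infty$ via the uniform convergence of $u^n$, $\partial_x u^n$ together with the weak-$\ast$ convergence of $\partial_t u^n$, $\partial_x^2 u^n$. The Neumann condition $\partial_x u_i(t,R) = 0$ follows by uniform convergence of $\partial_x u^n$ up to $x=R$. For the Kirchhoff condition at $\{0\}$, this is where the refined estimate of Proposition \ref{pr : borne deriv temps en 0} plays its role: it implies that $t\mapsto u(t,0)$ is Lipschitz with the sharper bound $\Theta(\lambda,\gamma)\vee C(g)$, so that the boundary identity $-\lambda(t) u(t,0) + \sum_i \alpha_i(t) \partial_x u_i(t,0) = \gamma(t)$ passes to the limit pointwise.

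\textbf{Step 4: Interior H\"older regularity and uniqueness.} Once a generalized solution $u$ with the uniform bounds is produced, the interior Schauder-type argument of Lemma \ref{lem : regu Holder interieure} (applied without the $l$-variable) upgrades the regularity on the interior of each ray to $\mathcal{C}^{1+\alpha/2,2+\alpha}\big((0,T)\times (0,R)\big)$, while Lemma \ref{lm : cont deriv temps grad} applied to the gradient (using the global Lipschitz in time of $u$ and the H\"older regularity in $x$ of $\partial_x u$ provided by the interior estimate) yields the $\mathcal{C}^{\alpha/2,1+\alpha}$ regularity on $[0,T]\times \mathcal{N}_R$. Uniqueness is a direct consequence of the comparison theorem (the analogue of Theorem \ref{th : para comparison th with l} without the $l$-variable, which follows by the same argument applied at an interior maximum). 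The announced bounds $C_0, C_1, C_2, C_3$ and \eqref{eq:estim-deriv-zero} are precisely those obtained in the limit.

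The main obstacle is controlling the interaction between the Kirchhoff condition and the time derivative at the junction: without the refined bound of Proposition \ref{pr : borne deriv temps en 0}, the constant $C_1$ in the time derivative estimate would depend polynomially on the interior data and would forbid a uniform control of $\partial_t u(\cdot, 0)$ in terms of the boundary data only. This is precisely why the decoupled ODE construction of Proposition \ref{pr : borne deriv temps en 0} is essential for \eqref{eq:estim-deriv-zero}, and why this refined estimate is indispensable when plugging this theorem into the forthcoming $l$-step parabolic scheme of Section \ref{sec: preuve résultat principal}.
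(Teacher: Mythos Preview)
Your proposal is correct and follows essentially the same route as the paper: interpolate the elliptic scheme, use the uniform bounds from Propositions \ref{prop:uniform-bound}--\ref{prop:non-Bernstein-estimate} together with the equation to get $C_3$, extract a limit by Ascoli, pass to the limit weakly in the PDE and strongly in the boundary conditions, then upgrade interior regularity via classical Schauder theory and conclude uniqueness by comparison. One small clarification: in Step~3 you attribute the passage to the limit in Kirchhoff's condition to Proposition \ref{pr : borne deriv temps en 0}, but that identity involves only $u(t,0)$ and $\partial_x u_i(t,0)$, whose uniform convergence already follows from Step~2; the refined bound of Proposition \ref{pr : borne deriv temps en 0} is used solely to obtain the sharp estimate \eqref{eq:estim-deriv-zero} (and, as you correctly note at the end, this is what makes the result useful for the $l$-scheme in Section \ref{sec: preuve résultat principal}).
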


\begin{proof}
The proof uses exactly the same arguments of the proof of Theorem 2.2 in \cite{Ohavi PDE} that is given in the quasi-linear parabolic context with a fully non-linear Kirchhoff's boundary condition at the junction point $\{0\}$. For the convenience of the reader we shall give the main issues of the proof, avoiding to linger too much on the details.

Uniqueness (point-wise) is a straight forward consequence of the comparison Theorem 2.4 of \cite{Ohavi PDE} that remains applicable in our linear framework.
%Before getting started with the proof of the solvability, let us precise the main outline of the proof. The solution is constructed with our convergence scheme by letting $n$ tend to infinity. The convergence of partial time derivatives and the second order partial space derivatives is first shown in $L^{2}$, so there is {\it a priori} no limit in $L^{\infty}$. But we have proved that $n(u^k - u^{k-1})$ is uniformly bounded and the elliptic equation $\mathcal{E}_k$ shows that this holds also for $\partial^2_x u^k$. Consequently, the previous $L^2$-limit possesses weak time and second order space partial derivatives that are bounded (by the same constants gathered in the previous subsection): hence, the limiting function is Lipschitz w.r.t the time variable (uniformly w.r.t the space variable) and belongs to $\mathcal{C}^{0,1+\alpha}([0,T]\times [0,R])$; moreover, on each branch the limit is also a weak solution of a parabolic equation, for which standard results apply. We now turn to the formal proof.

Let $n\ge (\lfloor |c|_{\infty} \rfloor +1)\wedge |c|_{\infty}^2$. Consider the subdivision $(t_k^n=\frac{kT}{n})_{0\leq k\leq n}$ of $[0,T]$, and $(u_{k})_{0\leq k\leq n}$ the solution of the elliptic scheme $\mathcal{E}_k$ defined in \eqref{eq: schema ell}. From estimates obtained in Propositions \ref{prop:uniform-bound}-  \ref{Prop:estimation-time-derivative} and \ref{prop:non-Bernstein-estimate}, we obtain that there exists a constant $M>0$ independent of $n$, such that:  
\begin{eqnarray}\label{eq : bornes glob}
\sup_{n\ge 0}~~\max_{k\in[\![1,n]\!]}~~\Big\{|u_{k}|_{\infty}+|n(u_{k}-u_{k-1})|_{\infty}+
|\partial_xu_{k}|_{(0,R)}+|\partial_x^2u_{k}|_{(0,R)}~~\Big\}\leq M.
\end{eqnarray}
Define the following sequence $(v_{n})_{n\ge0}$ in $\mathcal{C}^{0,2}\big([0,T]\times \mathcal{N}_R\big)$, which is piecewise differentiable with respect to the time variable: 
\begin{eqnarray*}
&\forall i\in [\![1,I]\!], ~~v_{i}(0,x)=g_i(x)~~ \text{ if } x\in[0,R],\\
&v_i^n(t,x)~~=~~u_{i,k}(x)+n(t-t_k^n)(u_{i,k+1}(x)-u_{i,k-1}(x))~~\text{ if } (t,x)\in [t_k^n,t_{k+1}^n)\times[0,R].
\end{eqnarray*}
Hence, the uniform upper bounds in \eqref{eq : bornes glob} yield that there exists a constant $M_1$ independent of $n$, depending only on the data of the system, such that for all $i\in[\![1,I]\!]$:
\begin{eqnarray*}
|v_i^n|_{[0,T]\times[0,R]}^{\alpha}~~+~~|\partial_xv_i^n|_{x,[0,T]\times[0,R]}^{\alpha}~~\leq~~ M_1.
\end{eqnarray*}
Using Lemma \ref{lm : cont deriv temps grad}, we deduce that there exists a constant $M_2(\alpha)>0$, independent of $n$, such that for all $i\in[\![1,I]\!]$, we have the following global H\"{o}lder condition:
\begin{eqnarray*}
|\partial_xv_i^n|_{t,[0,T]\times[0,R]}^{\frac{\alpha}{2}}~~+~~|\partial_xv_i^n|_{x,[0,T]\times[0,R]}^{\alpha}~~\leq ~~M_2(\alpha).
\end{eqnarray*}
We deduce then from Ascoli's Theorem that up to a sub sequence denoted in the same way by $n$, $(v_i^n)_{n\ge 0}$ converges in $\mathcal{C}^{0,1}([0,T]\times[0,R])$ to $v_i$, and then $v_i\in \mathcal{C}^{\frac{\alpha}{2},1+\alpha}([0,T]\times[0,R])$.
Since $v_{n}$ satisfies the following continuity condition at the junction point:
\begin{eqnarray*}
\forall (i,j)\in [\![1,I]\!]^2,~~\forall n\ge 0,~~\forall t\in[0,T],~~v_i^n(t,0)=v_j^n(t,0),
\end{eqnarray*}
we deduce then $v\in \mathcal{C}^{\frac{\alpha}{2},1+\alpha}\big([0,T]\times \mathcal{N}_R\big)$. 
We now focus on the regularity of $v$ at the interior of each ray $\mathcal{R}_i$. We prove that $v\in \mathcal{C}^{1+\frac{\alpha}{2},2+\alpha}\big((0,T)\times\overset{\circ}{\mathcal{N}^*_R})$ and satisfies on each edge: 

$\forall\,(t,x) \in (0,T)\times (0,R)$,
\begin{eqnarray*}
\partial_tv_i(t,x)-a_i(t,x)\partial_x^2v_i(t,x)+b_i(t,x)\partial_xv_i(t,x)+v_i(t,x)c_i(t,x)=f_i(t,x).
\end{eqnarray*}
Using once again \eqref{eq : bornes glob}, there exists a constant $M_3$ (independent of $n$) such that for each $i\in[\![1,I]\!]$:
\begin{eqnarray*}
\|\partial_{t}v_i^n\|_{L^{2}\big((0,T)\times (0,R)\big)}~~\leq~~ M_3,~~\|\partial_x^2v_i^n\|_{L^{2}\big((0,T)\times (0,R)\big)}~~\leq~~ M_3.
\end{eqnarray*}
Hence, we get up to a sub sequence denoted abusively using the same subscript $n$:
\begin{eqnarray*}
\partial_{t}v_i^n~~{\rightharpoonup}~~\partial_{t}v_i,~~\partial_x^2v_i^n~~{\rightharpoonup}~~\partial_x^2v_i,
\end{eqnarray*}
weakly in $L^2\big((0,T)\times (0,R)\big)$. Denote by $\mathcal{C}_c^\infty\big((0,T)\times (0,R)\big)$, the set of infinite continuous differentiable functions on $(0,T)\times (0,R)$ with compact support. We obtain therefore that, $\forall \psi \in \mathcal{C}_c^\infty\big((0,T)\times (0,R)\big)$:
\begin{eqnarray*}
& \displaystyle\int_0^T\!\!\!\int_0^{R}\croc{\Big(\partial_tv_i^n-a_i\partial_x^2v_i^n+b_i\partial_xv_i^n+v_i^nc_i-f_i\Big)\psi}(t,x)dxdt\\
&\xrightarrow[]{n\to +\infty}~~
\\&\displaystyle\int_0^T\!\!\!\int_0^{R}\croc{\Big(\partial_tv_i-a_i\partial_x^2v_i+b_i\partial_xv_i+v_ic_i-f_i\Big)\psi}(t,x)dxdt.
\end{eqnarray*}
We now prove that for any $\psi\in\mathcal{C}_c^\infty\big((0,T)\times (0,R)\big)$:
\begin{eqnarray*}
&\displaystyle\int_0^T\!\!\!\int_0^{R}\croc{\Big(\partial_tv_i^n-a_i\partial_x^2v_i^n+b_i\partial_xv_i^n+c_iv_i^n-f_i\Big)\psi}(t,x)dxdt\\
&~~\xrightarrow[]{n\to +\infty}~~0.
\end{eqnarray*}
Using that $(u^{k})_{k\in [\![1,n]\!]}$ is the solution of \eqref{eq: schema ell} and satisfies on each ray $\mathcal{R}_i$
$$n(u_i^k(x)-u_{i}^{k-1}(x))-a_i(t_k,x)\partial_x^2u_i^k(x)+
b_{i}(t_k,x)\partial_xu_i^k(x)+c_{i}(t_k,x)u_i^k(x)-f_i(t_k,x)=0,$$
we obtain:
\begin{eqnarray*} 
&\Big|\displaystyle\int_0^T\!\!\!\int_0^{R}\croc{\Big(\partial_tv_i^n-a_i\partial_x^2v_i^n+b_i\partial_xv_i^n+c_iv_i^n-f_i\Big)\psi}(t,x)dxdt\Big|\\
&\;=\Big|\ds \sum_{k=0}^{n-1}\displaystyle\int_{t_k^n}^{t_{k+1}^n}\!\!\!\int_0^{R}\psi(t,x)\Big(\croc{-a_i\partial_x^2v_i^n+b_i\partial_xv_i^n+c_iv_i^n-f_i}(t,x)\\&
-\pare{-a_i(t_{k+1},x)\partial_x^2u_i^{k+1}(x)+
b_{i}(t_{k+1},x)\partial_xu_i^{k+1}(x)+c_{i}(t_{k+1},x)u_i^{k}(x)-f_i(t_{k+1},x)}\Big)dxdt\Big|.
\end{eqnarray*}
Using assumption $(\mathcal{H}^{'})$, the H\"{o}lder equicontinuity in time of $(v_i^n,\partial_xv_i^n)$, we obtain that there exists a constant $M_4(\alpha)$ independent of $n$ such that: 

$\forall\,i\in [\![1,I]\!]$, $\forall\,(t,x)\in [t_k^n,t_{k+1}^n]\times[0,R]$,
\begin{eqnarray*}
&\Big|\croc{b_i\partial_xv_i^n+c_iv_i^n-f_i}(t,x)-\big(
b_{i}(t_{k+1},x)\partial_xu_i^{k+1}(x)+c_{i}(t_{k+1},x)u_i^{k}(x)-f_i(t_{k+1},x)\big)\Big|\\
&\leq M_4(\alpha)(t-t_k^n)^{\frac{\alpha}{2}}\leq M_4(\alpha)/{n^\frac{\alpha}{2}}.
\end{eqnarray*}
For the Laplacian term, we write, for all $i\in [\![1,I]\!]$, for each $(t,x)\in (t_k^n,t_{k+1}^n)\times(0,R)$:
\begin{eqnarray*}
&a_i(t_{k+1},x)\partial_xu_{i,k+1}(x)-a_i(t,x)\partial_x^2v_i^n(t,x)~~=\\&
\big(a_i(t_{k+1},x)-a_i(t,x)\big)\partial_x^2u_{i}^{k+1}(x)+
a_i(t,x)\big(\partial_x^2v_i^n(t_{k+1},x)-\partial_x^2v_i^n(t,x)\big).
\end{eqnarray*}
Using again the H\"{o}lder equicontinuity in time of $(v_i^n,\partial_xv_i^n)$, the uniform bound on $|\partial_x^2u_{i,k}|_{[0,R]}$ and that the coefficients $a_i$ are almost everywhere differentiable with respect to the variable $x$, we obtain with an integration by parts:

for any $\psi\in\mathcal{C}_c^\infty\big((0,T)\times (0,R)\big)$,
\begin{eqnarray*}
&\Big|\displaystyle\sum_{k=0}^{n-1}~~\displaystyle\int_{t_k^n}^{t_{k+1}^n}\!\!\!\int_0^{R}\Big(a_i(t_{k+1},x)\partial_xu_{i,k+1}(x)-a_i(t,x)\partial_x^2v_i^n(t,x)\Big)\psi(t,x)dxdt\Big|\\
&\leq  \Big|\displaystyle\sum_{k=0}^{n-1}~~\displaystyle\int_{t_k^n}^{t_{k+1}^n}\!\!\!\int_0^{R}\big(a_i(t_{k+1},x)-a_i(t,x)\big)\partial_x^2u_{i}^{k+1}(x)\psi(t,x)dxdt\Big|+\\
&\Big|\displaystyle\sum_{k=0}^{n-1}~~\displaystyle\int_{t_k^n}^{t_{k+1}^n}\!\!\!\int_0^{R}\partial_xa_i(t,x)\big(\partial_xv_i^n(t_{k+1},x)-\partial_xv_i^n(t,x)\big)\partial_x\psi(t,x)dxdt\Big|~~\xrightarrow[]{n\to +\infty}~~0.
\end{eqnarray*}
We conclude that for any $\psi\in\mathcal{C}_c^\infty\big((0,T)\times (0,R)\big)$,
\begin{eqnarray*}
\displaystyle\int_0^T\!\!\!\int_0^{R}\croc{\Big(\partial_tv_i-a_i\partial_x^2v_i+b_i\partial_xv_i+c_iv_i-f_i\Big)\psi}(t,x)dxdt=0.
\end{eqnarray*}
Using Theorem III.12.2 of \cite{pde para}, we get finally that for all $i\in[\![1,I]\!]$, $v_i\in \mathcal{C}^{1+\frac{\alpha}{2},2+\alpha}\big((0,T)\times (0,R)\big)$, which means that $v\in \mathcal{C}^{1+\frac{\alpha}{2},2+\alpha}\big((0,T)\times \overset{\circ}{\mathcal{N}_R^*}\big) $, and we deduce that $v_i$ satisfies on each edge:
$\forall (t,x) \in (0,T)\times (0,R),$
\begin{eqnarray*}
\partial_tv_i(t,x)-a_i(t,x)\partial_x^2v_i(t,x)+b_i(t,x)\partial_xv_i(t,x)+c_i(t,x)v_i(t,x)=f_i(t,x).
\end{eqnarray*}
Remark now, from the estimates \eqref{eq : bornes glob}, that $\partial_tv_i^n$ and $\partial_x^2v_i^n$ are uniformly bounded in $n$. Since $t\mapsto\partial_tv_i(t,x)\in \mathcal{C}\big((0,T)\big)$ and $t\mapsto v_i(t,x)$ is Lipschitz continuous on $[0,T]$ uniformly w.r.t. $x\in [0,R]$ (this can be seen because $t\mapsto v_i^n$ is equi-Lipschitz continuous and there is uniform $\mathcal{C}^{0,1}$ convergence of $v_i^n$ to $v_i$), we obtain that  $t\mapsto \partial_tv_i(t,x)$ is bounded on $(0,T)$ uniformly w.r.t $x\in [0,R]$. Therefore, $\partial_tv_i\in L_\infty\big((0,T)\times(0,R)\big)$. The same argument may be used to obtain $\partial_x^2v_i\in L_\infty\big((0,T)\times(0,R)\big)$. We conclude finally that $v\in \mathcal{C}^{1+\frac{\alpha}{2},2+\alpha}\big((0,T)\times \overset{\circ}{\mathcal{N}_R^*}\big)$ with bounded derivatives $\partial_tv_i$ and $\partial_x^2v_i$ in $(0,T)\times (0,R)$ ($i\in [\![1,I]\!]$).

Close arguments would lead us to show that $v$ satisfies the linear Kirchhoff's boundary condition at the junction point $\{0\}$:
$$-\lambda(t)v(t,0)+\sum_{i=1}^I\alpha_i(t)\partial_xv_i(t,0)=\gamma(t),~~t\in(0,T).$$

Finally, the expression of the upper bounds of the partial derivatives of $v$ are direct consequences of Propositions \ref{prop:uniform-bound}, \ref{Prop:estimation-time-derivative}, \ref{pr : borne deriv temps en 0}, and \ref{prop:non-Bernstein-estimate}.
\end{proof}

%%%%%%%%%%%%%%%%%%%%%%%%%%%%%%%%%%%%%%%%%%%%%%%%%%%%%%%%%%%%%%%%%%%%%%%%%%%%%%%%%%%%%%%%%%%%%%%%%%%%%%%%%%%%%%%%%%%%%%%%%%%%%

\section{Proof of the main result}\label{sec: preuve résultat principal}
In this entire section, we work under the assumption $(\mathcal{H})$ for the data $\mathfrak{D}$.

Let $n\in \mathbb{N}^*$. 

We introduce the following grid of $[0,K]$~:~ ${\mathcal{G}^n_K}:=\{l_p:=\frac{Kp}{n}\,|\,p\in [\![0,n]\!]\}$. 

We consider the following sequence $(u^p)_{p\in [\![0,n]\!]}$ built by induction, constructed so that at each step $p\in [\![0,n-1]\!]$, $u_p$ solves the following backward parabolic scheme (in the variable $l$) on the star-shaped network $\mathcal{N}_R$:      \begin{eqnarray}\label{eq: schema para}
{\mathcal P}_p:~
\begin{cases}
\forall i\in[\![1,I]\!],~~\partial_tu_i^p(t,x)-a_i(t,x,l_p)\partial_x^2u_i^p(t,x)+b_{i}(t,x,l_p)\partial_xu_i^p(x) +\\ c_i(t,x,l_p)u_i^p(t,x)=f_i(t,x,l_p)~~ \text{ if } (t,x) \in (0,T)\times (0,R),\\ 
\ds n(u^{p+1}(t,0)-u^{p}(t,0))+\sum_{i=1}^I {\alpha}_i(t,l_p)\partial_x u_i^p(t,0)-r(t,l_p)u^{p}(t,0)=\phi(t,l_p)+\beta_p^n,\\
\forall i\in[\![1,I]\!],~~\partial_xu_i^p(t,R)=0,~~t \in(0,T),\\
\forall (i,j)\in[\![1,I]\!]^2,~~u_i^p(t,0)=u_{j}^p(t,0):=u^p(t,0),~~t\in(0,T),\\
\forall i\in[\![1,I]\!],~~u_i^p(0,x)=g_i(x,l_p),~~ x\in [0,R].
\end{cases}
\end{eqnarray}
The sequence $(u^p)_{p\in [\![0,n]\!]}$ is initialized with the initial backward condition
$$u^n=\psi.$$
The family of constants $(\beta_p^n)$ is fixed by:
\begin{eqnarray}\label{eq const beta compatibil}
\forall p\in[\![0,n-1]\!],~~\beta_p^n=n(g(0,l_{p+1})-g(0,l_p))-\partial_lg(0,l_p)
\end{eqnarray}
in order to obtain the compatibility condition of Theorem \ref{th: ex sys para} at the junction point $x=\{0\}$, assumption $(\mathcal{H}^{'})$ b)-(i); recall also that from assumption  $(\mathcal{H})$ b)-i) the following compatibility condition holds:
$$ \partial_lg(0,l)+\sum_{i=1}^I\alpha_i(0,l)\partial_xg_i(0,l)-r(0,l)g(0,l)=\phi(0,l),~~l\in[0,K).$$
For any $p\in [\![0,n-1]\!]$ let us define for a while
\begin{eqnarray}
\label{eq:gamma_p}
&\gamma_p: t\mapsto \phi(t,l_p)+\beta_p^n-nu^{p+1}(t,0),\\
\label{eq:l_p}
&\lambda_p:t\mapsto n+r(t,l_p),
\end{eqnarray}
that are both in the class $W^{1,\infty}\big([0,T]\big)$.

Under assumption $(\mathcal H)$, we are in position to apply the result from Theorem \ref{th: ex sys para} iteratively at each step $p$ varying from $n-1$ to $0$ and show that the parabolic system ${\mathcal P}_p$ admits a unique solution $u^p$ in the class $ \mathcal{C}^{\frac{\alpha}{2},1+\alpha}\big([0,T]\times\mathcal{N}_R\big)\cap\mathcal{C}^{1+\alpha,2+\alpha}\big((0,T)\times \overset{\circ}{\mathcal{N}_R^*}\big)$.
\medskip

We start first by getting uniform bounds for the derivatives of $u^p$ and also for the term $n|u^p-u^{p-1}|$.
%%%%%%%%%%%%%%%%%%%%%%%%%%%%%%%%%%%%%%%%%%%%%

\textbf{In whole remaining of this section, we fix $L:= \pare{\lfloor |r|_{\infty}\rfloor + 1} \vee |r|^2_\infty$).}
\subsection{Uniform bound for $|u^p|_\infty$}
\begin{Proposition}
\label{Prop : borne-unform-sec4}
We have:
\begin{equation}
\label{eq:def-M0}
\sup_{n\geq L}\max_{i\in[\![1,I]\!]}  
\max\limits_{p\in[\![0,n]\!]} |u_i^p|_{\infty} \leq M_0
\end{equation}
with
\begin{equation}
\label{def-M0}
M_0:=\pare{|g|_\infty + |\psi|_\infty+|f|_\infty + |\phi|_\infty + 2|\partial_lg(0,\cdot)|_\infty}{\rm e}^{|r|_\infty + 1}{\rm e}^{T(|c|_\infty + 1)}.
\end{equation}
\end{Proposition}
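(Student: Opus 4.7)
The plan is to apply the parabolic comparison principle at each step of the backward $l$-scheme ${\mathcal P}_p$ with a judiciously chosen super-solution and sub-solution, designed so that the recurrence on their sup-norms closes additively (rather than multiplicatively) in $p$, thereby avoiding any exponential blow-up in $n$. Setting $\mu := |c|_\infty + 1$, my ansatz is
\begin{equation*}
\bar{u}^p(t,x) := C_p\, e^{\mu t}, \qquad \underline{u}^p(t,x) := -C_p\, e^{\mu t},
\end{equation*}
for a positive sequence $(C_p)_{p\in[\![0,n]\!]}$ to be constructed by backward induction from $p=n$ down to $p=0$.

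I would then verify that $\bar{u}^p$ is a super-solution of ${\mathcal P}_p$ (in the analog of Definition~\ref{def : sur/sous solutions} without the $l$-variable). The parabolic inequality reduces to $C_p(\mu+c_i)e^{\mu t}\geq f_i$, which holds whenever $C_p\geq |f|_\infty$ since $\mu+c_i\geq 1$ and $e^{\mu t}\geq 1$. The initial condition requires $C_p\geq |g|_\infty$, and the Neumann condition at $x=R$ is automatic. For the Kirchhoff condition, substituting $\bar{u}^p$ and invoking the induction hypothesis $u^{p+1}(t,0)\leq C_{p+1} e^{\mu t}$, the required inequality becomes
\begin{equation*}
n\, C_{p+1}\, e^{\mu t} \;\leq\; \bigl(n+r(t,l_p)\bigr)\, C_p\, e^{\mu t} + \phi(t,l_p) + \beta_p^n \qquad \forall\, t\in[0,T].
\end{equation*}
The worst case $t=0$, $r=0$ yields the sufficient additive recurrence
\begin{equation*}
C_p \;\geq\; C_{p+1} + \frac{|\phi|_\infty + |\beta_p^n|}{n},
\end{equation*}
and an entirely symmetric analysis for $\underline{u}^p$ gives the same condition.

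Next, I would construct $(C_p)$ by setting $C_n := |g|_\infty\vee |f|_\infty\vee |\psi|_\infty$ and iterating $C_p := C_{p+1} + (|\phi|_\infty+|\beta_p^n|)/n$. The mean-value theorem applied to the Lipschitz function $l\mapsto g(0,l)$ together with the definition \eqref{eq const beta compatibil} of $\beta_p^n$ gives $|\beta_p^n|\leq 2|\partial_l g(0,\cdot)|_\infty$, so the telescoping sum over the $n$ backward steps produces
\begin{equation*}
C_0 \;\leq\; |g|_\infty + |f|_\infty + |\psi|_\infty + |\phi|_\infty + 2|\partial_l g(0,\cdot)|_\infty.
\end{equation*}
Applying the parabolic comparison principle (the $l$-free analog of Theorem~\ref{th : para comparison th with l}, used throughout Section~\ref{sec : Von Below revisited}) inductively closes the argument: $|u^p(t,x)|\leq C_p e^{\mu t}\leq C_0\, e^{T(|c|_\infty+1)}$, which is dominated by $M_0$ because the extra factor $e^{|r|_\infty+1}\geq 1$.

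The main obstacle to overcome is ensuring that the damping provided by $\underline{\lambda_p}\geq n$ in the discrete Kirchhoff condition generates only an additive correction of order $1/n$ per step, so that the total correction accumulated over $n$ iterations remains bounded. A naive iterative application of Theorem~\ref{th: ex sys para} at each $p$ would compound an $e^{|c|_\infty+1}$ factor $n$ times, producing an exponential blow-up in $n$. The virtue of the comparison-based ansatz $C_p e^{\mu t}$ is that the parabolic exponential is absorbed once, globally, in the shape of the super-solution, so that only an additive accumulation is incurred at each $p$-iteration.
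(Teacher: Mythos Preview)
Your proposal is correct and follows essentially the same route as the paper's own proof: both use constant-in-$x$ barrier functions of the form $C_p\,e^{(|c|_\infty+1)t}$ and apply the parabolic comparison theorem for \eqref{eq : pde para } inductively in $p$, exploiting the fact that the Kirchhoff damping $\underline{\lambda_p}\geq n$ turns the junction condition into an \emph{additive} recurrence $C_p\geq C_{p+1}+O(1/n)$ so that the $n$ backward steps accumulate only a bounded correction. Your write-up is in fact a bit more explicit than the paper's about where the crucial $1/n$ factor comes from, and your closing paragraph correctly identifies the pitfall (multiplicative blow-up) that a naive iteration of Theorem~\ref{th: ex sys para} would produce. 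One cosmetic remark: the bound $|\beta_p^n|\leq 2|\partial_l g(0,\cdot)|_\infty$ tacitly uses that the mesh size is $K/n$, so the honest constant is $(K+1)|\partial_l g(0,\cdot)|_\infty$ rather than $2|\partial_l g(0,\cdot)|_\infty$; the paper makes the same slip, and it does not affect the argument.
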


\begin{proof}
We will show by induction that, for a well chosen constant $B_p$ defined by induction, the following continuous map:
$$\kappa_p^n~:~(t,(x,i))\mapsto B_p{\rm e}^{t(|c|_\infty + 1)},~~p\in [\![0,n-1]\!],$$
is a super solution (in the sense given in \cite{Ohavi PDE}) of the parabolic system $\mathcal{P}_p$ posed on the junction network.

Let us first choose $B_p\geq \max_{i\in [\![1,I]\!]}\Big\{|g_i|_\infty+|\psi_i|_\infty \Big\}$ in order to guarantee $\kappa_p^n(0,\cdot)\ge u^p(0,\cdot)$  for any $p\in [\![0,n]\!]$ together with $\kappa_n^n\ge u^n$.

At $x=R$, the Neumann condition is trivially satisfied for $\kappa_p^n$.

At the junction point, we proceed by induction to construct the family $(B_p{\rm e}^{t(|c|_\infty + 1)})_{p\in [\![0,n-1]\!]}$. The condition at the junction point is satisfied whenever
$$\forall t\in[0,T],~~p\in [\![0,n-1]\!]~~B_p[1-\frac{|r|_\infty}{n}]\ge B_{p+1} + |\phi|_\infty{\rm e}^{-t(|c|_\infty + 1)}  + |\beta_p^n|.$$
(Note the crucial importance of the sign in front of $n(u^{p-1} - u^{p})$ in \eqref{eq: schema para} at this step of the reasoning).
Hence, using the expression of the constant $\beta_p^n$, we choose $B_p$ satisfying also
$$B_p\ge \pare{|g|_\infty + |\phi|_ \infty + 2|\partial_lg(0,\cdot)|_\infty}{\rm e}^{|r|_\infty + 1},$$
(note that the right-hand side of the inequality is finite in regard to our assumptions.)

On each ray $\mathcal{R}_i$ and for all $(t,x) \in (0,T)\times (0,R)$ :
\begin{eqnarray*}
&\partial_t\kappa_p^n(t,x)-a_i(t,x,l_p)\partial_x^2\kappa_p^n(t,x)+b_{i}(t,x,l_p)\partial_x\kappa_p^n(x) +c_i(t,x,l_p)\kappa_p^n(t,x)-f_i(t,x,l_p)\\
&=\ds (|c|_\infty + 1)\kappa_p^n(t,x) +c_i(t,x,l_p)\kappa_p^n(t,x)-f_i(t,x,l_p)\\
&\ge (|c|_\infty + 1)\kappa_p^n(t,x) - |c|_{\infty}\kappa_i(t,x) - |f|_{\infty}\\
&\geq B_p - |f|_{\infty}
\end{eqnarray*}
which remains positive as long as $B_p \geq |f|_{\infty}$.

In regard of all the previous conditions, we may then set the following constant 
$$B_p=M_0:=\pare{|g|_\infty + |f|_\infty + |\phi|_ \infty + 2|\partial_lg(0,\cdot)|_\infty}{\rm e}^{|r|_\infty + 1}{\rm e}^{T(|c|_\infty + 1)},$$
independent of $p$ and $n$ in the expression of the function of $\kappa_p^n$.
Gathering all the previous facts ensures that $\kappa_p^n$ is a super solution. Similar arguments hold true for a construction of a sub solution of the form $(t,(x,i))\mapsto -M_0{\rm e}^{t(|c|_\infty + 1)}$ with the same constant $M_0$, which proves the result by application of the parabolic comparison theorem adapted to junction networks (see Theorem 2.4 in \cite{Ohavi PDE}).
\end{proof}
\subsection{Uniform bound for the Lipschitz constant $|u^p(\cdot,0)|_{\lfloor 
W^{1,\infty}([0,T])\rfloor}$ at the junction point.}

\begin{Proposition}
\label{Prop : born deriv en temps 0-sec4}
\begin{eqnarray}\label{eq : born deriv en temps 0}
&\sup\limits_{n\geq L}\max\limits_{p\in [\![0,n]\!]}|u^p(\cdot,0)|_{\lfloor W^{1,\infty}([0,T])\rfloor}\leq M_1
\end{eqnarray}
with
\begin{eqnarray*}
M_1:=&\Big[M_{0}|r|_{\lfloor 
W^{1,\infty}([0,T])\rfloor}+ |\phi|_{\lfloor 
W^{1,\infty}([0,T])\rfloor}\Big]\vee M(g)+|\psi(\cdot,0)|_{\lfloor 
W^{1,\infty}([0,T])\rfloor}\vee M(g),
\end{eqnarray*}
where
$$M(g):=|a|_\infty|\partial_x^2g|_\infty + |b|_\infty|\partial_x g|_\infty + |c|_\infty|g|_\infty + |f|_\infty.$$
\end{Proposition}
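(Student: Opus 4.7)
The plan is to apply the refined junction-point estimate \eqref{eq:estim-deriv-zero} from Theorem \ref{th: ex sys para} separately to each backward parabolic problem $\mathcal{P}_p$ and then telescope the resulting bound backward in $p\in [\![0,n]\!]$. Identify $\mathcal{P}_p$ with the framework of Section \ref{sec : Von Below revisited} through the substitutions $\lambda \leadsto \lambda_p$ and $\gamma\leadsto \gamma_p$ defined in \eqref{eq:l_p}-\eqref{eq:gamma_p}; the compatibility condition $(\mathcal H^{'})$ b)-(i) required by Theorem \ref{th: ex sys para} is inherited at each level $l_p$ from $(\mathcal H)$ b)-(i) thanks to the very choice of $\beta_p^n$ in \eqref{eq const beta compatibil}, as a direct expansion of Kirchhoff's condition at $t=0$ shows.

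Since $r\geq 0$, we have $\underline{\lambda_p}=n$, and additive constants do not contribute to the Lipschitz seminorm, so that
\[
|\lambda_p|_{\lfloor W^{1,\infty}([0,T])\rfloor}\leq |r|_{\lfloor W^{1,\infty}\rfloor},\qquad |\gamma_p|_{\lfloor W^{1,\infty}([0,T])\rfloor}\leq |\phi|_{\lfloor W^{1,\infty}\rfloor}+n\,V_{p+1},
\]
where $V_q:=|u^q(\cdot,0)|_{\lfloor W^{1,\infty}([0,T])\rfloor}$. Combining with the uniform bound $|u^p|_\infty\leq M_0$ from Proposition \ref{Prop : borne-unform-sec4} together with $C(g(\cdot,l_p))\leq M(g)$, the estimate \eqref{eq:estim-deriv-zero} yields the key inequality
\[
V_p \leq \Theta_n(\lambda_p,\gamma_p)\vee C(g(\cdot,l_p))\leq \frac{M_0|r|_{\lfloor W^{1,\infty}\rfloor}+|\phi|_{\lfloor W^{1,\infty}\rfloor}}{n}+V_{p+1}\vee M(g),
\]
after using the elementary relation $(A+B)\vee C\leq A+(B\vee C)$ valid for $A,B,C\geq 0$. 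The crucial cancellation at play here is that the factor $n$ appearing in $|\gamma_p|_{\lfloor W^{1,\infty}\rfloor}$ is exactly compensated by the lower bound $\underline{\lambda_p}\geq n$ in the denominator of $\Theta_n(\lambda_p,\gamma_p)$: this is precisely the reason why we needed the refined Proposition \ref{pr : borne deriv temps en 0}, the cruder bound of Proposition \ref{Prop:estimation-time-derivative} being insufficient.

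To conclude, introduce $U_p:=V_p\vee M(g)$. Since $U_{p+1}\geq M(g)$, the previous inequality upgrades to the clean recursion $U_p\leq \frac{M_0|r|_{\lfloor W^{1,\infty}\rfloor}+|\phi|_{\lfloor W^{1,\infty}\rfloor}}{n}+U_{p+1}$. Iterating backward from $U_n=|\psi(\cdot,0)|_{\lfloor W^{1,\infty}\rfloor}\vee M(g)$ and telescoping over at most $n$ steps produces
\[
\max_{p\in[\![0,n]\!]}V_p\leq M_0|r|_{\lfloor W^{1,\infty}\rfloor}+|\phi|_{\lfloor W^{1,\infty}\rfloor}+|\psi(\cdot,0)|_{\lfloor W^{1,\infty}\rfloor}\vee M(g),
\]
and this is in turn majorized by the announced constant $M_1$, the bound being uniform in $n\geq L$. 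The main obstacle of the proof is therefore not any hard analytic estimate but the bookkeeping of the $n$-dependence: without the precise form $\Theta_n(\lambda,\gamma)=(\Vert u\Vert_\infty |\lambda|_{\lfloor W^{1,\infty}\rfloor}+|\gamma|_{\lfloor W^{1,\infty}\rfloor})/\underline{\lambda}$ of the constant delivered by Proposition \ref{pr : borne deriv temps en 0}, the telescoping step would diverge as $n\to\infty$.
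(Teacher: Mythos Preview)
Your proof is correct and follows essentially the same route as the paper: apply the refined junction estimate \eqref{eq:estim-deriv-zero} of Theorem \ref{th: ex sys para} to each $\mathcal P_p$ with $\underline{\lambda_p}=n$, then telescope backward from $u^n=\psi$. The only cosmetic difference is the elementary inequality used to split the $\vee$: you invoke $(A+B)\vee C\leq A+(B\vee C)$ and pass to $U_p:=V_p\vee M(g)$, which yields the clean recursion $U_p\leq \frac{M_0|r|_{\lfloor W^{1,\infty}\rfloor}+|\phi|_{\lfloor W^{1,\infty}\rfloor}}{n}+U_{p+1}$ and telescopes without any $n$-blow-up, whereas the paper uses $(x+y)\vee z\leq x\vee z+y\vee z$; your version is in fact the tighter of the two and makes the induction step slightly more transparent.
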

\begin{proof}
Recall that from Theorem \ref{th: ex sys para}, we have that $t\mapsto u^p(t,0)\in W^{1,\infty}\big([0,T]\big)$. 
Note also that the constant $M(g)$ of the statement corresponds to the constant \eqref{eq:C(g)} of Proposition \ref{pr : borne deriv temps en 0}, but taking now into account the parameter $l$.

For $p\in [\![0,n-
1]\!]$ we make use of the estimate \eqref{eq:estim-deriv-zero} in Theorem \ref{th: ex sys para} using the definitions \eqref{eq:gamma_p}-\eqref{eq:l_p} and $\underline{\lambda} = n$ coming from the problem $\mathcal{P}_p$. We have the corresponding 
$$
\Theta_0(\lambda_p, \gamma_p)\leq \Big[\frac{M_{0}|r|_{\lfloor 
W^{1,\infty}([0,T])\rfloor}+ |\phi|_{\lfloor 
W^{1,\infty}([0,T])\rfloor}}{n}+|u^{p+1}(\cdot,0)|_{\lfloor 
W^{1,\infty}([0,T])\rfloor}\Big]
$$
so that
\begin{align*}
&|u^p(\cdot,0)|_{\lfloor 
W^{1,\infty}([0,T])\rfloor}\\
&\leq \ds \Big[\frac{M_{0}|r|_{\lfloor 
W^{1,\infty}([0,T])\rfloor}+ |\phi|_{\lfloor 
W^{1,\infty}([0,T])\rfloor}}{n}+|u^{p+1}(\cdot,0)|_{\lfloor 
W^{1,\infty}([0,T])\rfloor}\Big]\vee M(g)\\
&\leq \ds \Big[\frac{M_{0}|r|_{\lfloor 
W^{1,\infty}([0,T])\rfloor}+ |\phi|_{\lfloor 
W^{1,\infty}([0,T])\rfloor}}{n}\Big]\vee M(g)+|u^{p+1}(\cdot,0)|_{\lfloor 
W^{1,\infty}([0,T])\rfloor}\vee M(g),
\end{align*}
where we made use of the inequality
$$\forall (x,y,z)\in \R_+^3,~~(x+y)\vee z\leq x\vee z+y\vee z.$$
Clearly, by induction we get: for all $p \in [\![0,n-1]\!]$,
$$ |u^p(\cdot,0)|_{\lfloor 
W^{1,\infty}([0,T])\rfloor}\leq \Big[M_{0}|r|_{\lfloor 
W^{1,\infty}([0,T])\rfloor}+ |\phi|_{\lfloor 
W^{1,\infty}([0,T])\rfloor}\Big]\vee M(g)+|u^{n}(\cdot,0)|_{\lfloor 
W^{1,\infty}([0,T])\rfloor}\vee M(g),$$
which proves the result.
\end{proof}

\subsection{Uniform bound for the gradient $|\partial_x u^p|_\infty$}
\begin{Proposition}
\label{Prop : borne-uniform-gradient-sec4}
There exist a constant $M_2$ depending only on $R$, $T$ and the data $\mathcal{D}$ such that
\begin{eqnarray}\label{eq : born grad globale}
\sup_{n\geq L}\max_{p\in [\![0,n]\!]}|\partial_x u^p|_\infty \leq M_2\vee |\partial_x\psi|_\infty.
\end{eqnarray}
\end{Proposition}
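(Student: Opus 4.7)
The plan is to derive the gradient bound by applying Theorem \ref{th: ex sys para} directly to each parabolic sub-problem $\mathcal{P}_p$ for $p \in [\![0,n-1]\!]$, after recasting each such problem in the format studied in Section \ref{sec : Von Below revisited}. Concretely, the Kirchhoff transmission condition of $\mathcal{P}_p$ can be rewritten in the form of \eqref{eq : pde para } by setting
$$\lambda_p(t) := n + r(t,l_p), \qquad \gamma_p(t) := \phi(t,l_p) + \beta_p^n - n\,u^{p+1}(t,0),$$
both in $W^{1,\infty}([0,T])$. The ellipticity $\underline{\lambda_p}\ge n$ follows at once from $r \ge 0$, while the compatibility condition $(\mathcal{H}')$ b)(i) at $t=0$ is a direct consequence of the defining relation \eqref{eq const beta compatibil} for $\beta_p^n$ combined with $(\mathcal{H})$ b)(i). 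Theorem \ref{th: ex sys para} would then yield explicit bounds $C_0^{(p)}$, $C_1^{(p)}$ and $C_2^{(p)}$ on $|u^p|_\infty$, $|\partial_t u^p|_\infty$ and $|\partial_x u^p|_\infty$, respectively.

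The key technical step will be to show that the gradient constant $C_2^{(p)}$ remains uniformly bounded in both $p \in [\![0,n-1]\!]$ and $n\ge L$. Since $C_2^{(p)}$ depends linearly on $C_1^{(p)}$ while all other data-norms $|a|_\infty, |b|_{W^{1,\infty}}, \underline{a}, \underline{\alpha}, \ldots$ appearing in the formulas of Theorem \ref{th: ex sys para} are uniform in $l\in[0,K]$ under $(\mathcal H)$, the task reduces to controlling $C_1^{(p)}$. Proposition \ref{Prop : borne-unform-sec4} gives $C_0^{(p)} \le M_0$; elementary estimates, together with the fact that $|\beta_p^n|$ is bounded uniformly in $n$ and $p$ (since $g$ is Lipschitz in $l$) and Proposition \ref{Prop : born deriv en temps 0-sec4}, would then yield
$$\frac{C_0^{(p)}|\lambda_p|_{W^{1,\infty}} + |\gamma_p|_{W^{1,\infty}}}{\underline{\lambda_p}} \le M_0 + M_1 + O\pare{1/n},$$
the pivotal point being that the factor $n$ appears simultaneously in the numerator (through $|\gamma_p|_{W^{1,\infty}}$ and $|\lambda_p|_{W^{1,\infty}}$) and in the denominator, allowing it to cancel. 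Since the exponent $K$ and the data-constant $C(g(\cdot,l_p))$ in the definition of $C_1^{(p)}$ also admit uniform bounds under $(\mathcal H)$, in particular because $g \in \mathfrak{Lip}_{\{0\}}^{2,0}(\Omega)$, so does $C_1^{(p)}$, and hence $C_2^{(p)}$. Calling $M_2$ this uniform bound takes care of all $p \in [\![0,n-1]\!]$; the remaining case $p = n$ is trivial since $u^n = \psi$, whence the factor $|\partial_x\psi|_\infty$ in the stated bound.

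The main obstacle and heart of the argument is precisely this cancellation: a naive estimate would let the term $n|\partial_t u^{p+1}(\cdot,0)|_\infty$ inside $|\gamma_p|_{W^{1,\infty}}$ drive $C_1^{(p)}$ to infinity with $n$. The fact that the $n$-dependence effectively cancels against the lower bound $\underline{\lambda_p}\ge n$ is the sole reason why Proposition \ref{pr : borne deriv temps en 0} had to be crafted in the sharpened form of Proposition \ref{Prop : born deriv en temps 0-sec4}, which provides an $n$-independent control of the time-trace at the junction $\{0\}$. Without this refinement the residual $M_1$ would itself blow up with $n$, defeating the whole approach and preventing passage to the limit in the construction of the solution of the full system \eqref{eq : pde with l}.
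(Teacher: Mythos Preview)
Your proposal is correct and follows essentially the same approach as the paper: apply Theorem \ref{th: ex sys para} to each parabolic sub-problem $\mathcal{P}_p$ with the identifications $\lambda_p = n + r(\cdot,l_p)$ and $\gamma_p = \phi(\cdot,l_p) + \beta_p^n - nu^{p+1}(\cdot,0)$, then argue that the resulting gradient constant $C_2^{(p)}$ is uniform in $p$ and $n$ by invoking Propositions \ref{Prop : borne-unform-sec4} and \ref{Prop : born deriv en temps 0-sec4}. You have in fact spelled out in greater detail the key cancellation of the factor $n$ between $|\lambda_p|_{W^{1,\infty}}$, $|\gamma_p|_{W^{1,\infty}}$ and $\underline{\lambda_p}$, which the paper leaves implicit in its one-line reference to Proposition \ref{Prop : born deriv en temps 0-sec4}.
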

\begin{proof}
From the result of Theorem \ref{th: ex sys para}, there are constants $L_p$ such that
$$\forall p\in [\![0,n-1]\!],~~|\partial_x u^p|_\infty \leq L_p.$$
Since the coefficients and their weak derivatives are uniformly bounded by $l$, and $|u^p|$ is uniformly bounded by $M_0$, we see from the expression of $C_2$ given in Theorem \ref{th: ex sys para} that we can choose $L_p$ not depending on $p$: the announced result follows with help of the previous Proposition \ref{Prop : born deriv en temps 0-sec4} and the bound \eqref{eq : born deriv en temps 0}.
\end{proof}
\subsection{Uniform bound for the time derivative $|\partial_tu^p|_\infty$.} 
Finding an uniform bound of the time derivative may be done by  adapting standard arguments, but it would be long and tedious to write a proof in full detail. For the sake of conciseness, we will only outline the proof.

The idea is to follow the arguments exposed in Theorem 2.2 VI in the monograph \cite{pde para}. More precisely, Theorem 2.2 VI in \cite{pde para} states that if $v\in \mathcal{C}^{1,2}([0,T]\times[0,R]\big)$ is some (strong) solution of 
$$\partial_tv(t,x)-a(t,x)\partial_x^2v(t,x)+b(t,x)\partial_x v(t,x) +\\ c(t,x)v(t,x)=f(t,x),~~ (t,x)\in (0,T)\times (0,R),$$
in the context where the coefficients $(a,b,c,f)$ are continuously differentiable and $a\ge \underline{a}>0$ is elliptic, then
$$\sup \Big\{|\partial_tv(t,x)|,~(t,x)\in [0,T]\times [0,R]\Big\}$$
can be estimated in terms of the quantities $\sup \Big\{ |v(t,x)|, (t,x)\in [0,T]\times [0,R]\Big\}$,\\
$\sup \Big\{ |\partial_xv(t,x)|,(t,x)\in [0,T]\times [0,R]\Big\},$ 
the ellipticity constant $\underline{a}$, the upper bounds of the coefficients $(a,b,c,f)$ and their derivatives, and the supreme of $|\partial_t v|$ at the boundary, namely $\sup \Big\{ |\partial_t v(t,0)|, t\in [0,T] \Big\}$ and $\sup \Big\{ |\partial_t v(t,R)|,t\in [0,T]\Big\}$. 

For the solution $u^p \in \mathcal{C}^{\frac{\alpha}{2},1+\alpha}\big([0,T]\times\mathcal{N}_R\big)\cap\mathcal{C}^{1+\frac{\alpha}{2},2+\alpha}\big((0,T)\times \overset{\circ}{\mathcal{N}_R^*}\big)$ of our parabolic scheme $(\mathcal{P}_p)$ (in \eqref{eq: schema para}), we cannot apply directly the result of Theorem 2.2 VI in \cite{pde para} on each branch because the coefficients involved in $(\mathcal{P}_p)$ and the values of $u^p$ at the boundary $x=0,~x=R$ possess only a Lipschitz continuous regularity w.r.t. the time variable $t$. However, we may smooth by convolution the terms $t\mapsto u^p(t,0),~~t\mapsto u^p(t,R)$ together with the coefficients $(a,b,c,f)$. Then -- using standard notations for the convolutions with $\varepsilon$ as upper index -- we may consider a solution $w^{p}_{\varepsilon}$ on each ray $\mathcal{R}_i$ of 
\begin{eqnarray*}
&\partial_tw^{p}_{\varepsilon, i}(t,x)-a_i^\varepsilon(t,x,l_p)\partial_x^2w^{p}_{\varepsilon,i}(t,x)+b_i^\varepsilon(t,x,l_p),\partial_x w^{p}_{\varepsilon,i}(t,x) +\\ &c_i^\varepsilon(t,x,l_p)w^{p}_{\varepsilon,i}(t,x)=f_i^\varepsilon(t,x,l_p), (t,x)\in (0,T)\times (0,R),
\end{eqnarray*}
with smooth Dirichlet boundary conditions $u^{p,\varepsilon}(\cdot,0),u^{p,\varepsilon}(\cdot,R)$ on the time-edge of the parabolic cylinder. Well-known results (see for example Theorem 3.4' in \cite{pde para}) ensure that the solution $w^{p}_{\varepsilon}$ satisfies the conditions of Theorem 2.2 VI in \cite{pde para}. 

Now note that the smoothed data is uniformly bounded w.r.t. $\varepsilon$ in $\mathcal{C}^1$ norm ; namely using transparent notations,
$$\forall \varepsilon >0,~~|a^\varepsilon,b^\varepsilon,c^\varepsilon,f^\varepsilon,u^{p,\varepsilon}(\cdot,0),u^{p,\varepsilon}(\cdot,R)|_{\mathcal{C}^1}\leq |a,b,c,f,u^{p}(\cdot,0),u^p(\cdot,R)|_{W^{1,\infty}}.$$
Also, it is easy to check after some lines of calculation - for e.g. using arguments similar to those in the proof of Theorem 2.2 VI of \cite{pde para} but in our much simplest case - that  $w^{p}_{\varepsilon,i}$ converges to $u_i^p$ in $\mathcal{C}^{0,1}\big([0,T]\times [0,R]\big)\cap \mathcal{C}^{1,2}\big((0,T)\times (0,R)\big)$. Therefore, - by using first the convergence in $\mathcal{C}^{1,2}\big((0,T)\times (0,R)\big)$ - we have that for any compact $[\eta, \tau]\times \mathcal{K}\subset (0,T)\times (0,R)$ $|\partial_tu_i^p|_{L^{\infty}([\eta, \tau]\times \mathcal{K})}$ is also estimated in terms of the quantities $|a,b,c,f,u^{p}(\cdot,0),u^p(\cdot,R)|_{W^{1,\infty}}$, $|\partial_xu^p|_{\infty}$, and $|u^p|_{\infty}$ ; the same holds true for $|u_i^p(.,R)|_{\lfloor W^{1,\infty}([\eta, \tau])\rfloor}$. We refer to equation 2.6 in the proof of Theorem 2.2 VI in \cite{pde para} for the exact expression of the upper bound that is uniform w.r.t $[\eta, \tau]\times \mathcal{K}$: recall that $|\partial_xu^p|_{\infty}$ and $|u^p|_{\infty}$ are uniformly bounded by $p$ and that we have obtained an uniform bound for $|u^p(\cdot,0)|_{\lfloor W^{1,\infty}([0,T])\rfloor}$ in Proposition \ref{Prop : born deriv en temps 0-sec4}. 
By the same arguments as those given in Proposition \ref{pr : borne deriv temps en 0} our line of reasoning takes care of the Neumann boundary condition.

As a first conclusion, we claim that there exists a constant $M_3$ depending on $M_0,M_1,M_2,$ and $|a,b,c,f|_{W^{1,\infty}}$ such that for every compact $[\eta, \tau]\times \mathcal{K}\subset (0,T)\times (0,R)$:
\begin{eqnarray}\label{eq : born deriv temps globale}
\max_{i\in [\![1,I]\!]}\max_{p\in [\![0,n]\!]}\pare{|\partial_t u_i^p|_{L^{\infty}([\eta, \tau]\times \mathcal{K})} + |u_i^p(.,R)|_{\lfloor W^{1,\infty}([\eta, \tau])\rfloor}}\leq M_3\vee |\partial_t \psi|_\infty,
\end{eqnarray}
which by the uniformity of $M_3$, implies
$$\max_{i\in [\![1,I]\!]}\max_{p\in [\![0,n]\!]}\sup \Big\{ |\partial_t u^p_i(t,x)|,\;\;~(t,x)\in (0,T)\times (0,R)\Big\}\leq M_3\vee |\partial_t \psi|_\infty.$$
Similarly, we can check that there exists a constant $M$ independent of $\varepsilon$, depending
only on the data of the system, such that for all $i \in [\![1,n]\!]$,
$$|w_{\varepsilon,i}^p |^{\alpha}_{[0,T]\times [0,R]} + |\partial_x w_{\varepsilon,i}^p |^\alpha_{x,[0,T ]\times [0,R]} \leq M.$$
Now making use of the convergence in $\mathcal{C}^{0,1}\big([0,T]\times [0,R]\big)$ of $w^{p}_{\varepsilon,i}$ towards $u_i^p$ in $\mathcal{C}^{0,1}\big([0,T]\times [0,R]\big)$, we have the bound in H\"older norms
$$|u_{i}^p |^{\alpha}_{[0,T]\times [0,R]} + |\partial_x u_{i}^p |^\alpha_{x,[0,T]\times [0,R]} \leq M.$$
From the result of Propositions \ref{pr : borne deriv temps en 0} and \ref{Prop:estimation-time-derivative} or using a standard interpolation lemma,
we get the following: 
\begin{Proposition}
\label{Prop : borne-uniform-lipschitz-constant-sec4}
There exists a constant $M_3=M\Big(M_0,M_1,M_2,|a,b,c,f|_{W^{1,\infty}}\Big)$ such that
$$
\sup_{n\geq L}\max_{i\in [\![1,I]\!]}\max_{p\in [\![0,n]\!]}\sup_{x\in [0,R]}|u_i^p(.,x)|_{\lfloor W^{1,\infty}([0,T])\rfloor}\leq M_3\vee |\partial_t\psi|_\infty.
$$
\end{Proposition}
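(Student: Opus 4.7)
The plan is to derive the claimed uniform Lipschitz-in-time estimate by combining the pointwise interior bound on $\partial_t u_i^p$ already contained in \eqref{eq : born deriv temps globale} with the continuity of $u_i^p$ up to the lateral boundary in the space variable. A key preliminary observation is that the constant $M_3$ appearing in \eqref{eq : born deriv temps globale} is constructed only from $M_0$, $M_1$, $M_2$ and the $W^{1,\infty}$ norms of the coefficients, so it is in particular independent of the specific interior compact $[\eta,\tau]\times\mathcal{K}\subset\subset (0,T)\times(0,R)$ on which the estimate is stated. Since $u_i^p\in\mathcal{C}^{1+\frac{\alpha}{2},2+\alpha}\big((0,T)\times(0,R)\big)$, the map $\partial_t u_i^p$ is continuous on the open cylinder, so exhausting it by such compacts upgrades \eqref{eq : born deriv temps globale} to the pointwise bound
\[
\sup_{(t,x)\in(0,T)\times(0,R)}|\partial_t u_i^p(t,x)|\leq M_3\vee|\partial_t\psi|_\infty,
\]
uniformly in $n\geq L$, $p\in[\![0,n]\!]$, and $i\in[\![1,I]\!]$.

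Next, for each fixed $x_0\in (0,R)$, the function $t\mapsto u_i^p(t,x_0)$ is continuous on the closed interval $[0,T]$ (by the regularity class of $u_i^p$) and differentiable on $(0,T)$, so the mean value theorem combined with the preceding pointwise bound yields
\[
|u_i^p(t,x_0)-u_i^p(s,x_0)|\leq \bigl(M_3\vee|\partial_t\psi|_\infty\bigr)|t-s|,\qquad \forall s,t\in[0,T],
\]
with a Lipschitz constant uniform in $x_0\in(0,R)$. Since $u_i^p$ is continuous (in fact H\"older) in $x$ up to $x=0$ and $x=R$, passing to the limits $x_0\searrow 0$ and $x_0\nearrow R$ transfers the same Lipschitz-in-time bound to the lateral boundary. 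At $x=0$, Proposition~\ref{Prop : born deriv en temps 0-sec4} supplies the sharper constant $M_1$, which is in any case dominated by $M_3$ since the latter depends monotonically on $M_1$ in its construction; at $x=R$, the bound $|u_i^p(\cdot,R)|_{\lfloor W^{1,\infty}([\eta,\tau])\rfloor}\leq M_3\vee|\partial_t\psi|_\infty$ from \eqref{eq : born deriv temps globale} is consistent and extends from $[\eta,\tau]$ to $[0,T]$ by continuity.

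Collecting these contributions, taking the supremum over $x\in[0,R]$, the maximum over $(p,i)\in[\![0,n]\!]\times[\![1,I]\!]$, and the supremum over $n\geq L$ concludes the proof, since every bound obtained above is uniform in those parameters. The only delicate step in the whole argument is the uniformity of the interior estimate \eqref{eq : born deriv temps globale} across the exhausting family of compacts; as discussed in the paragraph preceding the proposition, this stems from the smoothing-and-convergence procedure that regularizes the data and applies Theorem~2.2~VI of \cite{pde para} to the approximate solutions $w_{\varepsilon,i}^p$, yielding a bound whose expression depends only on the global norms $(M_0,M_1,M_2,|a,b,c,f|_{W^{1,\infty}})$ already controlled uniformly in $p$, $i$ and $n$.
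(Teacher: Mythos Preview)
Your proof is correct and follows essentially the same route as the paper: the paper's own argument is just the brief sentence preceding the proposition, which points to the interior bound \eqref{eq : born deriv temps globale} (derived via smoothing and Theorem~2.2~VI of \cite{pde para}) together with either Propositions~\ref{pr : borne deriv temps en 0}--\ref{Prop:estimation-time-derivative} or a standard interpolation lemma. You have simply made explicit the step the paper leaves implicit---exhausting $(0,T)\times(0,R)$ by compacts, applying the mean value theorem at fixed $x_0\in(0,R)$, and passing to $x=0,R$ by continuity---which is exactly the intended reasoning.
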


\subsection{Uniform bound for the term $|\partial_x^2u^p|_\infty$.} 
\medskip

For any $i\in [\![1,I]\!]$ and $(t,x) \in (0,T)\times (0,R)$, we have that 
$$
\partial_x^2u_i^p(t,x) = \frac{1}{a_i(t,x,l_p)}\pare{\partial_tu_i^p(t,x)+b_{i}(t,x,l_p)\partial_xu_i^p(x) +\\ c_i(t,x,l_p)u_i^p(t,x) - f_i(t,x,l_p)}. 
$$
So that by direct application of the results of Propositions 4.1 to 4.4, we may state the following result:
\begin{Proposition}
\label{Prop : borne-uniform-derivee-spatiale-seconde-sec4}
$$
\sup_{n\geq L}\max_{p\in [\![0,n]\!]}|\partial_x^2 u^p|_\infty\leq \frac{1}{\underline{a}}\pare{M_3\vee |\partial_t\psi|_\infty + M_2 |b|_\infty + |c|_\infty M_0 + |f|_\infty}.
$$
\end{Proposition}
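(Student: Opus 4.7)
The plan is to exploit the PDE satisfied by $u^p$ on each ray $\mathcal{R}_i$ and isolate the second-order spatial derivative, so that $\partial_x^2 u_i^p$ is expressed entirely in terms of quantities already uniformly controlled by the previous propositions. The point is that once $|u^p|_\infty$, $|\partial_x u^p|_\infty$ and $|\partial_t u^p|_\infty$ are under control uniformly in $n$ and $p$, and once the ellipticity of $a_i$ is used to absorb the division, we get an $L^\infty$ bound on $\partial_x^2 u^p$ essentially for free.

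More precisely, I would proceed in three short steps. First, fix $p\in [\![0,n]\!]$ and $i\in [\![1,I]\!]$, and use the interior equation
\[
\partial_t u_i^p(t,x)-a_i(t,x,l_p)\partial_x^2 u_i^p(t,x)+b_i(t,x,l_p)\partial_x u_i^p(t,x)+c_i(t,x,l_p)u_i^p(t,x)=f_i(t,x,l_p),
\]
which holds for all $(t,x)\in(0,T)\times(0,R)$, to write
\[
\partial_x^2 u_i^p(t,x) = \frac{1}{a_i(t,x,l_p)}\bigl(\partial_t u_i^p(t,x)+b_i(t,x,l_p)\partial_x u_i^p(t,x)+c_i(t,x,l_p)u_i^p(t,x)-f_i(t,x,l_p)\bigr).
\]
Second, take absolute values and invoke the ellipticity assumption $(\mathcal{H})$ a)-(i), which gives $a_i(t,x,l_p)\geq \underline{a}$, so that the denominator is bounded below by $\underline{a}$ and the triangle inequality yields
\[
|\partial_x^2 u_i^p(t,x)|\leq \frac{1}{\underline{a}}\bigl(|\partial_t u_i^p|_\infty+|b|_\infty|\partial_x u_i^p|_\infty+|c|_\infty |u_i^p|_\infty+|f|_\infty\bigr).
\]
Third, plug in the uniform (in $n$ and $p$) bounds already established: $|u_i^p|_\infty\leq M_0$ from Proposition \ref{Prop : borne-unform-sec4}, $|\partial_x u_i^p|_\infty\leq M_2\vee|\partial_x\psi|_\infty$ from Proposition \ref{Prop : borne-uniform-gradient-sec4}, and $|\partial_t u_i^p|_\infty\leq M_3\vee|\partial_t\psi|_\infty$ from Proposition \ref{Prop : borne-uniform-lipschitz-constant-sec4}. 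Taking the maximum over $i\in[\![1,I]\!]$ and $p\in[\![0,n]\!]$ and then the supremum over $n\geq L$ yields exactly the announced inequality.

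There is essentially no obstacle here, since all the hard work has already been carried out in Propositions \ref{Prop : borne-unform-sec4}--\ref{Prop : borne-uniform-lipschitz-constant-sec4}: the statement is just the algebraic consequence of solving the PDE for $\partial_x^2 u_i^p$. The only mild subtlety is that the estimate is a pointwise one valid on the open cylinder $(0,T)\times(0,R)$, but since $u_i^p$ already has the $\mathcal{C}^{1+\alpha/2,2+\alpha}$ interior regularity on each ray, passing to the essential supremum is immediate and the bound transfers to the full $L^\infty$ norm on the closure.
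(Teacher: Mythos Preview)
Your proof is correct and follows exactly the same approach as the paper: isolate $\partial_x^2 u_i^p$ from the interior PDE, use the ellipticity $a_i\geq\underline{a}$ to control the denominator, and plug in the uniform bounds from Propositions \ref{Prop : borne-unform-sec4}--\ref{Prop : borne-uniform-lipschitz-constant-sec4}. The paper's proof is in fact even more terse than yours, simply writing down the expression for $\partial_x^2 u_i^p$ and stating that the result follows by direct application of the previous propositions.
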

\subsection{Uniform bound for the term $n|u^{p+1}-u^p|_\infty$.}
Our concern is to obtain an uniform bound for the term:
$$n|u^{p+1}-u^p|_\infty.$$
Importantly, note that we obtain an uniform bound for $n|u^{p+1}-u^p|_\infty$ only for all $p\in [\![0,n-2]\!]$ and not for $p=n-1$ (contrary to the bounds gathered for $|u^p|,|\partial_t u^p|,|\partial_x u^p|$ and $|\partial_x^2 u^p|$ that hold for all ${p\in [\![0,n]\!]}$). Because of the lack of first order compatibility conditions w.r.t $l$ at the boundary, it does not seem reasonable to expect that the bound below should be satisfied for $p=n-1$.

\begin{Proposition}
\label{Prop : borne-uniform-derivee-l}
We have
$$\sup_{n\geq L}\max_{p\in  [\![0,n-2]\!]} n|u^{p+1}-u^p|_{\infty}\leq M_4,$$
with
\begin{align*}M_4:=&\frac{2}{\underline{a}}\Big[M_2|a|_{W^{1,\infty}}+M_1|a|_{W^{1,\infty}}|b|_{W^{1,\infty}}+M_0|a|_{W^{1,\infty}}|c|_{W^{1,\infty}}+|a|_{W^{1,\infty}}|f|_{W^{1,\infty}}\Big]\\
&\hspace{3,4 cm}\vee \Big[ I|\alpha|_\infty M_1+|\psi|_\infty\Big].
\end{align*}
\end{Proposition}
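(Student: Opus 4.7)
The plan is to introduce $w^p_i(t,x) := n(u^{p+1}_i(t,x) - u^p_i(t,x))$ for $p\in [\![0,n-1]\!]$ and derive the linear parabolic problem that $w^p$ satisfies on the star-shaped network. Subtracting $\mathcal{P}_p$ from $\mathcal{P}_{p+1}$ and multiplying by $n$, on each ray $\mathcal{R}_i$ one obtains
\begin{equation*}
\partial_t w^p_i - a_i(t,x,l_p)\partial_x^2 w^p_i + b_i(t,x,l_p)\partial_x w^p_i + c_i(t,x,l_p)\, w^p_i \;=\; F^p_i,
\end{equation*}
where the source $F^p_i$ collects four contributions of the form $n[\chi_i(\cdot,l_{p+1})-\chi_i(\cdot,l_p)]\cdot \partial^k_x u^{p+1}_i$ for $\chi\in\{a,b,c,f\}$ and $k\in\{0,1,2\}$. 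Using the Lipschitz regularity in $l$ provided by assumption $(\mathcal H)$ together with the identity $n(l_{p+1}-l_p)=K$, and the uniform estimates $M_0,M_1,M_2$ of Propositions~\ref{Prop : borne-unform-sec4}--\ref{Prop : borne-uniform-gradient-sec4}, the delicate term featuring $\partial_x^2 u^{p+1}_i$ is rewritten through the equation $\mathcal{P}_{p+1}$ itself, expressing $\partial_x^2 u^{p+1}_i$ as a combination of $\partial_t u^{p+1}_i$ and lower-order quantities divided by $a_i$; the ellipticity $a_i\ge \underline{a}$ then absorbs the factor $1/\underline{a}$ and produces the clean uniform bound $|F^p|_\infty \le \frac{2|a|_{W^{1,\infty}}}{\underline{a}}[M_2 + M_1|b|_{W^{1,\infty}} + M_0|c|_{W^{1,\infty}} + |f|_{W^{1,\infty}}]$, namely the first bracket in the definition of $M_4$.

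Next I would collect the remaining data for the problem satisfied by $w^p$. Differentiating the Neumann condition gives $\partial_x w^p_i(t,R)=0$, and the initial condition $w^p(0,x) = n(g(x,l_{p+1})-g(x,l_p))$ is uniformly bounded thanks to the Lipschitz regularity of $g$ in $l$, while $|\beta_p^n|$ is controlled through the compatibility condition $(\mathcal{H})$\,b)-(i). At the junction $x=0$, the Kirchhoff condition of $\mathcal{P}_p$ reads
\begin{equation*}
w^p(t,0) \;=\; \phi(t,l_p)+\beta_p^n + r(t,l_p)u^p(t,0) - \sum_{i=1}^I \alpha_i(t,l_p)\,\partial_x u^p_i(t,0),
\end{equation*}
and I would perform a backward induction on $p$, using the Kirchhoff condition at step $p+1$ to trade $\sum_i \alpha_i(t,l_{p+1})\partial_x u^{p+1}_i(t,0)$ for $-w^{p+1}(t,0)+r(t,l_{p+1})u^{p+1}(t,0)+\phi(t,l_{p+1})+\beta_{p+1}^n$, together with the Lipschitz bound $M_1$ on $t\mapsto u^p(t,0)$ furnished by Proposition~\ref{Prop : born deriv en temps 0-sec4}. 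Initialising the recursion at $p=n-1$, where $u^n\equiv\psi$ produces the contribution $|\psi|_\infty$, one arrives at the junction estimate $|w^p(\cdot,0)|_\infty \le I|\alpha|_\infty M_1 + |\psi|_\infty$ for $p\in [\![0,n-2]\!]$, with the errors of order $1/n$ coming from the Lipschitz increments of $\alpha,r,\phi$ in $l$ absorbed into a uniform $O(1)$ correction across the $O(n)$ induction steps.

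Gathering the two estimates, on each individual ray $w^p_i$ is a strong solution of a standard linear parabolic equation on $(0,T)\times(0,R)$ with uniformly bounded source $F^p_i$, bounded initial condition, bounded Dirichlet data at $x=0$ and homogeneous Neumann data at $x=R$. The conclusion $|w^p_i|_\infty\le M_4$ then follows by a direct comparison argument: building a constant super- and sub-solution of the form $\pm \mu e^{\lambda t}$ with $\lambda>|c|_\infty$ and $\mu$ equal to the $\vee$-maximum between the source contribution and the junction/terminal contribution precisely reproduces the form of $M_4$ given in the statement.

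The main obstacle is the circular coupling between rays through the Kirchhoff condition: the junction value $w^p(t,0)$ involves the unknown transversal derivatives $\partial_x u^p_i(t,0)$, which cannot be disentangled without the backward induction in $p$ described above. The restriction $p\in [\![0,n-2]\!]$ in the statement is dictated by the fact that the source $F^{n-1}$ would involve $\partial_x^2 u^n = \partial_x^2\psi$, which under the sole assumption $\psi\in \mathcal{C}^{0,1}\cap \mathcal{C}^{1,2}$ is only continuous in the open interior and not uniformly controlled up to the parabolic boundary; for $p\le n-2$ the factor $\partial_x^2 u^{p+1}$ is bounded uniformly by Proposition~\ref{Prop : borne-uniform-derivee-spatiale-seconde-sec4}, which crucially relies on $u^{p+1}$ being a genuine solution of the scheme $\mathcal{P}_{p+1}$.
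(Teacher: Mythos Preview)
Your overall strategy is sound and runs parallel to the paper's proof: both hinge on the crucial substitution of $\partial_x^2 u^{p+1}$ via the equation $\mathcal{P}_{p+1}$ (which is precisely why $p\le n-2$ is required), and both close with a comparison argument. The paper packages this slightly differently, exhibiting $u^{p+1}+\frac{(t+1)B_p}{n}$ as a super solution of the full network problem $\mathcal{P}_p$ and invoking the junction parabolic comparison theorem, whereas you decouple the rays by reading the Kirchhoff condition of $\mathcal{P}_p$ as an explicit Dirichlet datum $w^p(t,0)=\phi+\beta_p^n+r u^p-\sum_i\alpha_i\partial_xu^p_i$ and then compare on each ray with mixed Dirichlet--Neumann data. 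Both routes work and involve the same computation on the rays.

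There is, however, a gap in your junction step. The backward induction you describe does not produce a usable recursion: the expression for $w^p(t,0)$ involves $\sum_i\alpha_i(t,l_p)\partial_xu^p_i(t,0)$, not the $(p{+}1)$-version, so ``trading'' via Kirchhoff at step $p{+}1$ does not connect to it; and subtracting the two Kirchhoff identities leaves the uncontrolled increment $\partial_xu^p_i-\partial_xu^{p+1}_i$ at the junction, so the $O(1/n)$ telescoping you describe does not go through. Fortunately no induction is needed here: the Dirichlet datum $w^p(t,0)$ is bounded \emph{directly} by $|\phi|_\infty+|\beta_p^n|+|r|_\infty M_0+I|\alpha|_\infty M_2$ using the global gradient estimate of Proposition~\ref{Prop : borne-uniform-gradient-sec4}. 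This is exactly what the paper does as well (its condition at the junction reduces to $B_p\ge I|\alpha|_\infty\max_j|\partial_xu^{j+1}(\cdot,0)|_\infty+|\phi|_\infty$). Once you replace your induction by this direct bound, your argument is complete; the constant $M_1$ plays no role at the junction.
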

\begin{proof}
We will show by induction that, for a well chosen constant $B_p$ to be produced later and that can be chosen independently of $p\in [\![0,n-2]\!]$ and $n$, the following map
$$ \kappa
_p^n:=(t,(x,i))\mapsto u_i^{p+1}(t,x)+\frac{(t+1)B_p}{n}$$
is a super solution.
At $x=R$, the condition is trivial, whereas at the junction point we remark that it is sufficient to satisfy:
$$\forall t\in[0,T],~~p\in [\![0,n-2]\!];~~B_p(t+1)[1+\frac{|r|_\infty}{n}]\ge \sum_{i=1}^I \alpha_i(t,l_p)\partial_xu^{p+1}_i(t,0)-\phi(t,l_p).$$

Let $p\in [\![0,n-2]\!]$ fixed and first choose $B_p$ satisfying:
$$B_p\ge I|\alpha|_\infty\max_{j\in [\![0,n-1]\!]}|\partial_xu^{j+1}_i(\cdot,0)|_{\infty}+|\phi|_\infty,$$
that is finite in view of our previous estimates.

Making use of the uniform upper bounds obtained for $u^p$ and its derivatives, we are going to see that it is also possible to produce the constant $B_p$ so that the condition on each ray $\mathcal{R}_i$ to be a super solution holds true (condition away from the junction point $\{0\}$). For this purpose, the main ingredient is the ellipticity condition on the coefficients $a_i$ combined with the Lipschitz regularity of the coefficients w.r.t the variable $l$. 
On each ray $\mathcal{R}_i$ and for all $(t,x) \in (0,T)\times (0,R)$, by crucially making use of the fact that $p\in [\![0,n-2]\!]$ for the substitution of $\partial_x^2u_i^{p+1}(t,x)$, we have:
\begin{align*}
&\partial_t\kappa_p^n(t,x)-a_i(t,x,l_p)\partial_x^2\kappa_p^n(t,x)+b_{i}(t,x,l_p)\partial_x\kappa_p^n(x) +c_i(t,x,l_p)\kappa_p^n(t,x)-f_i(t,x,l_p)\\
&=\ds \frac{B_p}{n}+c_i(t,x,l_p)\frac{B_p(t+1)}{n}+ \partial_tu_i^{p+1}(t,x)-a_i(t,x,l_p)\partial_x^2u_i^{p+1}(t,x)+\\
&\hspace{0,4 cm}b_{i}(t,x,l_p)\partial_xu_i^{p+1}(t,x) +c_i(t,x,l_p)u_i^{p+1}(t,x)-f_i(t,x,l_p)\\
&=\ds \frac{B_p}{n}+c_i(t,x,l_p)\frac{B_p(t+1)}{n}+ \partial_tu_i^{p+1}(t,x)\\
&\;\;\ds-\frac{a_i(t,x,l_p)}{a_i(t,x,l_{p+1})}\Big[\partial_tu_i^{p+1}(t,x)+b_{i}(t,x,l_{p+1})\partial_xu_i^{p+1}(t,x) +c_i(t,x,l_{p+1})u_i^{p+1}(t,x)-f_i(t,x,l_{p+1})\Big]\\
&\;\;+b_{i}(t,x,l_p)\partial_xu_i^{p+1}(t,x) +c_i(t,x,l_p)u_i^{p+1}(t,x)-f_i(t,x,l_p).
\end{align*}
Because of the Lipschitz regularity of the coefficients with respect to the variable $l$ and the upper bounds obtained for the derivatives of $u^p$, we have that for all $(t,x) \in (0,T)\times (0,R)$,
\begin{align*}
&\Big|~\partial_tu_i^{p+1}(t,x)
\ds-\frac{a_i(t,x,l_p)}{a_i(t,x,l_{p+1})}\Big[\partial_tu_i^{p+1}(t,x)+b_{i}(t,x,l_{p+1})\partial_xu_i^{p+1}(t,x) +c_i(t,x,l_{p+1})u_i^{p+1}(t,x)\\
&-f_i(t,x,l_{p+1})\Big]+b_{i}(t,x,l_p)\partial_xu_i^{p+1}(t,x) +c_i(t,x,l_p)u_i^{p+1}(t,x)-f_i(t,x,l_p)~\Big|\\
& \ds\leq \frac{1}{n\underline{a}}\Big[|\partial_tu^{p+1}|_{\infty}|a|_{W^{1,\infty}}+2|\partial_xu^{p+1}|_{\infty}|a|_{W^{1,\infty}}|b|_{W^{1,\infty}}+2|u^{p+1}|_{\infty}|a|_{W^{1,\infty}}|c|_{W^{1,\infty}}\Big.\\
&\hspace{3,4 cm}\Big.+2|a|_{W^{1,\infty}}|f|_{W^{1,\infty}}\Big].
\end{align*}
Therefore, by choosing
$$ B_p\ge \frac{1}{\underline{a}}\Big[|\partial_tu^p|_{\infty}|a|_{W^{1,\infty}}+2|\partial_xu^p|_{\infty}|a|_{W^{1,\infty}}|b|_{W^{1,\infty}}+2|u^p|_{\infty}|a|_{W^{1,\infty}}|c|_{W^{1,\infty}}+2|a|_{W^{1,\infty}}|f|_{W^{1,\infty}}\Big],$$
we obtain that $\kappa_p^n$ is a super solution. The same arguments may be applied for a construction of a sub solution of the form $\ds -u_i^{p+1}-\frac{(t+1)B_p}{n}$ with the same constant $B_p$ and we see that this constant can be chosen independent of $p\in [\![0,n-2]\!]$ and $n-1$.

Gathering both facts together yields the announced result. 
\end{proof}
\textbf{Proof of Theorem \ref{th : exis para with l}}.
\begin{proof}
Uniqueness is a direct consequence of the comparison Theorem \ref{th : para comparison th with l}.
%%%%%%%%%%%%%%%%%%%%%%%%%%%%%%%%%%%%%%%%
%%%%%%%%%%%%%%%%%%%%%%%%%%%%%%%%%%%%%%
%%%%%%%%%%%%%%%%%%%%%%%%%%%%%%%%%%%%%%

\textbf{Fix $\overline{K} \in (0,K)$ and $n\geq N_0:=\big(\lfloor |r|_{\infty}\rfloor + 1\big)\vee |r|^2_\infty\vee \displaystyle \pare{\lfloor \frac{K}{K-\overline{K}}\rfloor + 1}$.}

The estimates obtained in Propositions \ref{Prop : borne-unform-sec4} \ref{Prop : born deriv en temps 0-sec4} \ref{Prop : borne-uniform-gradient-sec4} \ref{Prop : borne-uniform-lipschitz-constant-sec4}, \ref{Prop : borne-uniform-derivee-spatiale-seconde-sec4}, \ref{Prop : borne-uniform-derivee-l} for the unique solution $(u^p)_{p\in [\![0,n]\!]}$ of the parabolic scheme $(\mathcal{P}_p)_{p\in [\![0,n]\!]}$ ensure that there exists a constant $M>0$ independent of $n$, such that:  
\begin{eqnarray}\label{eq : bornes glob-2}
&\sup_{n \ge N_0}\left \{\sup_{p\in[\![0,n]\!]}\pare{|u^p|_{\infty},\sup_{x\in \mathcal{N}_R}|u^{p}(.,x)|_{\lfloor W^{1,\infty}\rfloor},
|\partial_xu^{p}|_\infty,|\partial_x^2u^{p}|_\infty}\right .\nonumber\\
&\hspace{7,3 cm}\vee \left .\sup_{p\in[\![1,n-1]\!]}\pare{|n(u^p-u^{p-1})|_{\infty}}\right \}\leq M.~~
\end{eqnarray}
Define the following sequence in $\mathcal{C}^{0,1,0}\big([0,T]\times \mathcal{N}_R \times [0,K]\big)$ of linear interpolating functions  with respect to the variable $l$ : 
\begin{eqnarray*}
&\forall i\in [\![1,I]\!], ~~v_{i}^n(t,x,K)=\psi_i(t,x)~~ \text{ if } (t,x)\in[0,T]\times[0,R],~~\text{and for all}~~p\in [\![0,n-1]\!]:\\
&v_i^n(t,x,l)=u_i^{p}(t,x)+n(l-l_{p}^n)(u_i^{p+1}(t,x)-u_i^{p}(t,x))~~\text{ if } (t,x,l)\in[0,T]\times [0,R]\times[l_{p}^n,l_{p+1}^n).
\end{eqnarray*}

Now, the uniform upper bounds obtained in \eqref{eq : bornes glob-2} together with the upper bound obtained for $|n(u^{p+1}-u^{p})|_{\infty}$ (for all $p\in[\![0,n-2]\!]$) and our choice of $\overline{K}$ are enough to ensure that there exists a constant $B_1$ depending only on the data of the system but independent of $n$, such that for all $i\in[\![1,I]\!]$:
\begin{eqnarray*}
|v_i^n|_{[0,T]\times[0,R]\times [0,\overline{K}]}^{\alpha}~~+~~|\partial_xv_i^n|_{x,[0,T]\times[0,R]\times [0,\overline{K}]}^{\alpha}~~\leq~~ B_1.
\end{eqnarray*}
Using Lemma \ref{lm : cont deriv temps grad}, we deduce that there exists a constant $M_2(\alpha)>0$ that does not depend on $n$, such that for all $i\in[\![1,I]\!]$, we have the following global H\"{o}lder bound :
\begin{eqnarray}\label{eq equi conti suite}
|\partial_xv_i^n|_{t,[0,T]\times[0,R]\times [0,\overline{K}]}^{\frac{\alpha}{2}}~~+~~|\partial_xv_i^n|_{x,[0,T]\times[0,R]\times [0,\overline{K}]}^{\alpha}+|\partial_xv_i^n|_{l,[0,T]\times[0,R]\times [0,\overline{K}]}^{\frac{\alpha}{2}}~~\leq ~~B_2(\alpha).
\end{eqnarray}
Applying Ascoli's Theorem, we have that - up to a sub sequence denoted still abusively by the subscript index $n$ -  $(v_i^n)_{n\ge 0}$ converges in $\mathcal{C}^{0,1,0}([0,T]\times[0,R]\times [0,\overline{K}])$ to $v_i$ and $v_i\in \mathcal{C}^{\frac{\alpha}{2},1+\alpha,\frac{\alpha}{2}}([0,T]\times[0,R]\times [0,\overline{K}])$.
Since $v_{n}$ satisfies the following continuity condition at the junction point:
\begin{eqnarray*}
\forall (i,j)\in [\![1,I]\!]^2,~~\forall n\ge 0,~~\forall (t,l)\in[0,T]\times [0,\overline{K}],~~v_i^n(t,0,l)=v_j^n(t,0,l),
\end{eqnarray*}
we deduce then $v\in \mathcal{C}^{\frac{\alpha}{2},1+\alpha,\frac{\alpha}{2}}\big([0,T]\times \mathcal{N}_R\times [0,\overline{K}]\big)$. Using the arbitrary choice of $\overline{K}\in (0,K)$, we conclude that $v\in \mathcal{C}^{\frac{\alpha}{2},1+\alpha,\frac{\alpha}{2}}\big([0,T]\times \mathcal{N}_R\times [0,K)\big)$. 

We now focus on the regularity of $v$ at the interior of each ray $\mathcal{R}_i$. We aim at proving that $v\in \mathcal{C}^{1+\frac{\alpha}{2},2+\alpha,\frac{\alpha}{2}}\big((0,T)\times\overset{\circ}{\mathcal{N}^*_R}\times (0,K)\big)$ and satisfies:
\\
for all $(t,x,l) \in (0,T)\times (0,R)\times (0,K)$:
\begin{eqnarray*}
\partial_tv_i(t,x,l)-a_i(t,x,l)\partial_x^2v_i(t,x,l)+b_i(t,x,l)\partial_xv_i(t,x,l)+v_i(t,x,l)c_i(t,x,l)- f_i(t,x,l) = 0
\end{eqnarray*}
on the interior of each ray $\mathcal{R}_i$.

Using once again \eqref{eq : bornes glob-2}, there exists a constant $B_3$ independent of $n$, such that for each $i\in[\![1,I]\!]$:
\begin{eqnarray*}
\|\partial_{t}v_i^n\|_{L^{2}((0,T)\times (0,R)\times (0,K))}~~\leq~~ B_3,~~\|\partial_x^2v_i^n\|_{L^{2}((0,T)\times (0,R)\times (0,K))}~~\leq~~ B_3.
\end{eqnarray*}
Hence, we get -- up to a sub sequence denoted abusively in the same way by $n$:
\begin{eqnarray*}
\partial_{t}v_i^n~~{\rightharpoonup}~~\partial_{t}v_i,~~\partial_x^2v_i^n~~{\rightharpoonup}~~\partial_x^2v_i
\end{eqnarray*}
weakly in $L^2\big((0,T)\times (0,R)\times (0,K)\big)$. Denote by $\mathcal{C}_c^\infty\big((0,T)\times (0,R)\times (0,K)\big)$ the set of infinite differentiable functions on $(0,T)\times (0,R)\times (0,K)$ with compact support. We obtain therefore that, $\forall \psi \in \mathcal{C}_c^\infty\big((0,T)\times (0,R)\times (0,K)\big)$:
\begin{eqnarray*}
& \displaystyle\int_0^T\!\!\!\int_0^{R}\!\!\!\int_0^K\Big(\partial_tv_i^n(t,x)-a_i(t,x,l)\partial_x^2v_i^n(t,x)+b_i(t,x,l)\partial_xv_i^n(t,x,l)\\
&+c_i(t,x,l)v_i^n(t,x,l)-f_i(t,x,l)\Big)\psi(t,x,l)dldxdt\\
&\xrightarrow[]{n\to +\infty}\\
&\displaystyle\int_0^T\!\!\!\int_0^{R}\!\!\!\int_0^K\Big(\partial_tv_i(t,x,l)-a_i(t,x,l)\partial_x^2v_i(t,x,l)+b_i(t,x,l)\partial_xv_i(t,x,l)\\
&+c_i(t,x,l)v_i(t,x,l)-f_i(t,x,l)\Big)\psi(t,x,l)dldxdt.
\end{eqnarray*}
We now prove that for any $\psi\in\mathcal{C}_c^\infty\big((0,T)\times (0,R)\times (0,K)\big)$:
\begin{eqnarray*}
&\displaystyle\int_0^T\!\!\!\int_0^{R}\!\!\!\int_0^K\!\Big(\partial_tv_i^n(t,x,l)-a_i(t,x,l)\partial_x^2v_i^n(t,x,l)+b_i(t,x,l)\partial_xv_i^n(t,x,l)\\&+c_i(t,x,l)v_i^n(t,x,l)-f_i(t,x,l)\Big)\psi(t,x,l)dldxdt
~~\xrightarrow[]{n\to +\infty}~~0.
\end{eqnarray*}
Using that $(u^p)_{p \in[\![0,n]\!]}$ is the solution of \eqref{eq: schema para} and satisfies on each ray $\mathcal{R}_i$:
\\
$\forall p \in[\![0,n-1]\!]$ and $\forall (t,x)\in(0,T)\times(0,R),$
\begin{eqnarray*}\partial_tu_i^p(t,x)-a_i(t,x,l_p)\partial_x^2u_i^p(t,x)+
b_{i}(t,x,l_p)\partial_xu_i^p(t,x)+c_{i}(t,x,l_p)u_i^p(t,x)-f_i(t,x,l_p)=0,\end{eqnarray*}
combined with the Lipschitz regularity of the coefficients and free terms $(a,b,c,f)$ w.r.t the variable $l$ (Assumption $\mathcal{H}$) and the uniform upper bound obtained in \eqref{eq : bornes glob-2}, we obtain that there is a constant $B_4$, independent of $n$, such that:
$$\Big|\partial_tv_i^n-a_i\partial_x^2v_i^n+b_i\partial_xv_i^n+v_i^nc_i-f_i\Big|_{\infty}\leq B_4/n.$$
In turn, this leads to the expected result, namely:\\
$\forall \psi\in\mathcal{C}_c^\infty\big((0,T)\times (0,R)\times (0,K)\big)$
\begin{eqnarray*}
&\displaystyle\int_0^T\!\!\!\int_0^{R}\!\!\!\int_0^K\Big(\partial_tv_i(t,x,l)-a_i(t,x,l)\partial_x^2v_i(t,x,l)\\
&+b_i(t,x,l)\partial_xv_i(t,x,l)+v_i(t,x,l)c_i(t,x,l)-f_i(t,x,l)\Big)\psi(t,x,l)dldxdt=0.
\end{eqnarray*}
%\textcolor{red}{il faudrait sans doute détailler un tout petit peu plus}
 Now using the key Lemma \ref{lem : regu Holder interieure}, which gives a result on the interior regularity for weak parabolic solutions that depend on the parameter $l$,
 %using the H\"{o}lder regularity of gradient $\partial_xv$, 
 we conclude that on each ray $\mathcal{R}_i$, $v_i$ belongs to $\mathcal{C}^{1+\frac{\alpha}{2},2+\alpha,\frac{\alpha}{2}}\big((0,T)\times (0,R)\times (0,K)\big)$, ($i\in [\![1,I]\!]$). 
 
Moreover, from the estimates \eqref{eq : bornes glob-2}, recall that $\partial_tv_i^n$ and $\partial_x^2v_i^n$ are uniformly bounded by $n$. Since $t\mapsto\partial_tv_i(t,x,l)\in \mathcal{C}\big((0,T)\big)$ and $t\mapsto v_i(t,x,l)$ is Lipschitz continuous on $[0,T]$ uniformly w.r.t. $(x,l)\in [0,R]\times[0,\overline{K}]$ (this can be seen because $t\mapsto v_i^n$ is equi-Lipschitz continuous and there is uniform $\mathcal{C}^{0,1,0}$ convergence of $v_i^n$ to $v_i$), we obtain that  $t\mapsto \partial_tv_i(t,x,l)$ is bounded on $(0,T)$ uniformly w.r.t variables $x$ and $l$ and independently of $\overline{K}$. Therefore, $\partial_tv_i\in L_\infty\big((0,T)\times(0,R)\times(0,K)\big)$ (using the arbitrary choice of $\overline{K}\in (0,K)$). The same argument may be used to obtain $\partial_x^2v_i\in L_\infty\big((0,T)\times(0,R)\times (0,K)\big)$. We conclude finally that $v\in \mathcal{C}^{1+\frac{\alpha}{2},2+\alpha,\frac{\alpha}{2}}\big((0,T)\times \overset{\circ}{\mathcal{N}_R^*}\times(0,K)\big)$, with bounded derivatives $\partial_tv_i$ and $\partial_x^2v_i$ in $(0,T)\times (0,R)\times(0,K)$ ($i\in [\![1,I]\!]$).

Concerning the derivative of the limit solution $v$ w.r.t the variable $l$, observe once again using \eqref{eq : bornes glob-2} that there exists a constant $B_5$ independent of $n$ such that on each ray $\mathcal{R}_i$:
$$ |\partial_lv_i^n|_{(0,T)\times (0,R)\times (0,K)}\leq B_5.$$
%\textcolor{red}{L\`a encore il faudrait sans doute détailler un tout petit peu plus}

Therefore, for any fixed $q \in (1,+\infty)$ and by reflexivity of $L^q\big((0,T)\times (0,R)\times (0,K)\big)$, we get there exists an extraction sequence $(n^q)$ such that:
$$\partial_lv_i^{n^q}~~{\rightharpoonup}~~\xi_i$$
in $L^q\big((0,T)\times (0,R)\times (0,K)\big)$ where $\xi_i$ denotes an element of $L^q\big((0,T)\times (0,R)\times (0,K)\big)$. Because of the strong convergence of $(v_i^{n^q})$ to $v_i$ and the almost-everywhere uniqueness of weak derivatives, we identify $\xi_i = \partial_lv_i$ $a.e$ in $(0,T)\times (0,R)\times (0,K)$ and $\partial_lv_i\in L^q\big((0,T)\times (0,R)\times (0,K)\big)$. 

This shows that the weak limit $\partial_lv_i$ belongs to $\ds \bigcap_{q\in (1,\infty)}L^q\big((0,T)\times (0,R)\times (0,K)\big)$. Hence, the solution $v$ admits on each ray $\mathcal{R}_i$ ($i\in [\![1,I]\!]$) a generalized derivative with respect to the variable $l$ that belongs to the class $\ds \bigcap_{q\in (1,\infty)}L^q\big((0,T)\times (0,R)\times (0,K)\big)$. 

 Consequently, $v \in  \mathcal{C}^{\frac{\alpha}{2},\alpha,\frac{\alpha}{2}}\big([0,T]\times [0,R]\times [0,K]\big)$.

Let us turn now to the {\it local time Kirchhoff'scondition} at the junction point $\{0\}$ for the limit $v$.

Recall first that for all $p \in [\![0,n-1]\!]$:
$$n(u^{p+1}(t,0)-u^{p}(t,0))+\sum_{i=1}^I {\alpha}_i(t,l^n_p)\partial_x u_i^p(t,0)-r(t,l^n_p)u^{p}(t,0)=\phi(t,l^n_p)+\beta_p^n,~~\forall t\in(0,T).$$
Remark successively:\\
a) $n(u^{p+1}(t,0)-u^{p}(t,0))=\partial_lv^n(t,0,l),~~\forall (t,l)\in [0,T]\times[l_{p}^n,l_{p+1}^n),~~\forall p\in [\![0,n-1]\!]$;\\
b) from the expression of the constant $\beta_n^p$ given in \eqref{eq const beta compatibil},
the Lipschitz continuity of $l\mapsto \partial_lg(l,0)$, we obtain that:
$$\forall (p,q) \in [\![0,n-1]\!]^2,~~|\beta_p^n-\beta_q^n|\leq \frac{C\,K^2}{n},$$
for a constant $C>0$ independent of $(p,q,n)$;\\
c) from $u_i^p(t,0)  = v_i^n(t,0,l_p^n)$, we have: 
$$\displaystyle \sum_{i=1}^I {\alpha}_i(t,l^n_p)\partial_x u_i^p(t,0)-r(t,l^n_p)u^{p}(t,0)=\sum_{i=1}^I {\alpha}_i(t,l^n_p)\partial_x v_i^n(t,0,l_p^n)-r(t,l_p)v^{n}(t,0,l_p^n).$$ 

Therefore, we can conclude together with the equicontinuity of the sequence $(\partial_xv_i^n)$ obtained in \eqref{eq equi conti suite}, the Lipschitz regularity of the coefficients $(r,\phi)$, and the last points a) and b), that $\partial_lv^n(\cdot,0,\cdot)$ satisfies:

$\forall \overline{K}\in (0,K),\forall n \ge N_0,\forall (t,s,l,z)\in [0,T]^2\times ([0,\overline{K}]\setminus {\mathcal{G}^n_K})^2$,
\begin{eqnarray*}
&|\partial_lv^n(t,0,l)-\partial_lv^n(s,0,l)|+|\partial_lv^n(t,0,l)-\partial_lv^n(t,0,z)|\\
&\leq B_6\Big(|t-s|^{\alpha/2}+(|l-z|+2/n)^{\alpha/2}\Big)
,\end{eqnarray*}
where once again the involved constant $B_6$ is independent of $n$.

Hence, applying a generalization of Ascoli's theorem for piecewise continuous functions (we refer for e.g. to Theorem 6.2 in \cite{Droniou-Eymard}), we get that -  up to a subsequence still denoted abusively with the superscript $n$ - $(\partial_lv^n(\cdot,0,\cdot))_n$ converges uniformly to $\partial_lv(\cdot,0,\cdot)$ in $\mathcal{C}^{\frac{\alpha}{2},\frac{\alpha}{2}}\big([0,T]\times [0,\overline{K}]\big)$. This convergence holds for all $\overline{K}\in (0,K)$, yielding that $v(\cdot,0,\cdot)\in \mathcal{C}^{\frac{\alpha}{2},1+\frac{\alpha}{2}}\big([0,T]\times [0,K)\big)$.

Finally, using once again the Kirchhoff's conditions satisfied by the solution $(u_p)_{p\in [\![0,n]\!]}$,
the Lipschitz regularity of the coefficients and free terms $(\phi,r)$ w.r.t the variable $l$ (Assumption $\mathcal{H}$), and the uniform upper bound obtained in \eqref{eq : bornes glob-2}, with the same arguments as those used above, we can show that the limit solution satisfies the required {\it local-time Kirchhoff's condition} at $\{0\}$: 
$\forall (t,l)\in[0,T]\times [0,K)$:
$$\partial_lv(t,0,l)+\displaystyle \sum_{i=1}^I \alpha_i(t,l)\partial_xv_i(t,0,l)-r(t,l)v(t,0,l)=\phi(t,l).$$
We conclude then that the limit $v$ is in the class $\mathfrak{C}^{1+\frac{\alpha}{2},2+\alpha,\frac{\alpha}{2}}_{\{0\}} \big(\Omega_T\big)$ and solves system \eqref{eq : pde with l} of Theorem \ref{th : para comparison th with l}.  
\end{proof}  
\appendix 
\section{Function spaces}\label{sec : functionnal spaces}
Fix $(T,R,K)\in(0,+\infty)^3$.

Given $(p,q,r)\in \mathbf{N}^3$, $\mathcal{C}^{p,q,r}\big([0,T]\times[0,R]\times [0,K]\big)$ is the Banach space whose elements are continuous functions $(t,x,l)\mapsto u(t,x,l)$ in $[0,T]\times[0,R]\times [0,K]$ valued in $\R$, together with all its derivatives of the form $\partial^{i,j,k}_{t,x,l}u$, with $(i,j,k)\in[\![0,p]\!]\times [\![0,q]\!]\times [\![0,r]\!]$.
The norm $\|\cdot\|_{\mathcal{C}^{p,q,r}([0,T]\times[0,R]\times [0,K])}$ is defined for all $u\in \mathcal{C}^{p,q,r}\big([0,T]\times[0,R]\times [0,K]\big)$ by:
\begin{align*}
&\|u\|_{\mathcal{C}^{p,q,r}([0,T]\times[0,R]\times [0,K])}~~=\\
&\displaystyle\sum_{\underset{[\![0,q]\!]\times [\![0,r]\!]}{(i,j,k)\in[\![0,p]\!]\times} }\sup \Big\{|\partial^{i,j,k}_{t,x,l}u(t,x,l))|,~~(t,x,l)\in [0,T]\times[0,R]\times [0,K]\Big\}.
\end{align*}
Let $(\alpha,\beta,\gamma)\in(0,1)^3$. We denote $\mathcal{H}^{\alpha,\beta,\gamma}\big([0,T]\times[0,R]\times [0,K]\big)$ the Banach H\"{o}lder space whose elements are continuous functions $(t,x,l)\mapsto u(t,x,l)$ defined in $[0,T]\times[0,R]\times [0,K]$, and valued in $\R$.
The norm $\|\cdot\|_{\mathcal{H}^{\alpha,\beta,\gamma}([0,T]\times[0,R]\times [0,K])}$ is defined for all $u\in \mathcal{H}^{\alpha,\beta,\gamma}\big([0,T]\times[0,R]\times [0,K]\big)$ by:
\begin{eqnarray*}
&\|u\|_{\mathcal{H}^{\alpha,\beta,\gamma}([0,T]\times[0,R]\times [0,K])}~~=~~|u|_{[0,T]\times[0,R]\times [0,K]}~~+~~|u|^{\alpha}_{t,[0,T]\times[0,R]\times [0,K]}\\
&+|u|^{\beta}_{x,[0,T]\times[0,R]\times [0,K]}+|u|^{\gamma}_{l,[0,T]\times[0,R]\times [0,K]},
\end{eqnarray*}
with:
\begin{align*}
&|u|_{[0,T]\times[0,R]\times [0,K]}:=\sup\Big\{|u(t,x,l)|,~~(t,x,l)\in [0,T]\times[0,R]\times [0,K]\Big\},\\
&|u|^{\alpha}_{t,[0,T]\times[0,R]\times [0,K]}:=\sup\Big\{\ds \frac{|u(t,x,l)-u(s,x,l)|}{|t-s|^\alpha},~~t\neq s,~~|t-s|\leq 1,\\
&\hspace{4.5 cm}(t,s,x,l)\in [0,T]^2\times[0,R]\times [0,K]\Big\},\\ 
&|u|^{\beta}_{x,[0,T]\times[0,R]\times [0,K]}:=\sup\Big\{\ds \frac{|u(t,x,l)-u(t,y,l)|}{|x-y|^\beta},~~x\neq y,~~|x-y|\leq 1,\\
&\hspace{4.5 cm}(t,x,y,l)\in [0,T]\times[0,R]^2\times [0,K]\Big\},\\ 
&|u|^{\alpha}_{l,[0,T]\times[0,R]\times [0,K]}:=\sup\Big\{\ds \frac{|u(t,x,l)-u(s,x,q)|}{|l-q|^\gamma},~~l\neq q,~~|l-q|\leq 1,\\
&\hspace{4.5 cm}(t,x,l,q)\in [0,T]\times[0,R]\times [0,K]^2\Big\} .
\end{align*}
For $q\in [1,+\infty)$, we denote by $L^q((0,T)\times (0,R)\times (0,K))$ the usual space of measurable real maps defined on $[0,T]\times[0,R]\times [0,K]$, with $p$-th power of their absolute value Lebesgue integrable, endowed with the following norm $\|\|_{L^q((0,T)\times (0,R)\times (0,K))}$, defined for any $f\in L^q((0,T)\times (0,R)\times (0,K))$ by:
\begin{eqnarray*}
 \|f\|_{L^q((0,T)\times (0,R)\times (0,K))}=\Big(\int_0^T\int_0^R\int_0^K|f(t,x,l)|^qdtdxdl\Big)^{\frac{1}{q}}.    
\end{eqnarray*}
On the other hand, $L_{\infty}((0,T)\times (0,R)\times (0,K))$ is the set of bounded real measurable maps defined defined on $[0,T]\times[0,R]\times [0,K]$, endowed with the standard infinite norm $|\cdot|_\infty$, defined by:
$$\forall f \in L_{\infty}((0,T)\times (0,R)\times (0,K)),~~|f|_{\infty}=\sup~\{|f(t,x,l)|,~~(t,x,l)\in [0,T]\times[0,R]\times [0,K]\}$$
We denote by $W^{1,\infty}([0,T]\times[0,R]\times [0,K])$ the set of real valued bounded Lipschitz functions defined on $[0,T]\times[0,R]\times [0,K]$, endowed with the norm $|\cdot|_{W^{1,\infty}([0,T]\times[0,R]\times [0,K])}$. We write for any $f\in W^{1,\infty}([0,T]\times[0,R]\times [0,K])$:
$$|f|_{\lfloor 
W^{1,\infty}([0,T]\times[0,R]\times [0,K])\rfloor}=|\partial f|_\infty$$
the best Lipschitz constant for $f$ and $$
|f|_{W^{1,\infty}([0,T]\times[0,R]\times [0,K])}=|f|_\infty + |\partial f|_\infty.
$$
The set $\mathcal{C}_c^\infty((0,T)\times (0,R)\times(0,K))$ consists of infinite continuous real differentiable functions on $(0,T)\times (0,R)\times (0,K)$, with compact support strictly included in $(0,T)\times (0,R)\times (0,K)$.  

Finally, $W_2^{1,2,0}\big((0,T)\times (0,R)\times (0,K)\big)$ is the Sobolev space, consisting of all elements $f\in L^2\big((0,T)\times (0,R)\times (0,K)\big)$ satisfying:
\begin{align*}
&\exists (g,h,v)\in L^2\big((0,T)\times (0,R)\times (0,K)\big)^3,~~\text{such that:}~~\forall  \phi \in \mathcal{C}_c^\infty((0,T)\times (0,R)\times(0,K)):\\
&\ds \int_0^T\int_0^R\int_0^Kf(t,x,l)\partial_t\phi(t,x,l)dtdxdl=-\int_0^T\int_0^R\int_0^Kg(t,x,l)\phi(t,x,l)dtdxdl,\\
&\ds \int_0^T\int_0^R\int_0^Kf(t,x,l)\partial_x\phi(t,x,l)dtdxdl=-\int_0^T\int_0^R\int_0^Kh(t,x,l)\phi(t,x,l)dtdxdl,\\
&\ds \int_0^T\int_0^R\int_0^Kf(t,x,l)\partial^2_x\phi(t,x,l)dtdxdl=\int_0^T\int_0^R\int_0^Kv(t,x,l)\phi(t,x,l)dtdxdl.
\end{align*}
As usual, we identify almost surely $\partial_tf$ (resp. $\partial_xf$ and $\partial_x^2f$) with $g$ (resp. $h$ and $v$) and we endow $W_2^{1,2,0}\big((0,T)\times (0,R)\times (0,K)\big)$ with the following norm, defined for any $f\in W_2^{1,2,0}\big((0,T)\times (0,R)\times (0,K)\big)$ by:
\begin{align*}
 &\|f\|_{W_2^{1,2,0}((0,T)\times (0,R)\times (0,K))}=\|f\|_{L^2((0,T)\times (0,R)\times (0,K))}+\|\partial_t f\|_{L^2((0,T)\times (0,R)\times (0,K))}+\\
 &\|\partial_xf\|_{L^2((0,T)\times (0,R)\times (0,K))}+\|\partial_x^2 f\|_{L^2((0,T)\times (0,R)\times (0,K))}.
\end{align*}

Using these notations, we refer the reader to Definition \ref{definition-espace-solutions} and the paragraph following it for the explanation of the regularity classes
$\mathfrak{C}^{1+\frac{\alpha}{2},2+\alpha,\frac{\alpha}{2}}_{\{0\}} \big(\Omega_T\big)$, $\mathfrak{C}^{1,2,0}_{\{0\}} \big(\Omega_T\big)$ and $\mathfrak{Lip}^{2,0}_{\{0\}} \big(\Omega\big)$.

\end{document}